\newcommand{\blue}{}
\definecolor{mygreen}{rgb}{0.05,0.5,0.8}
 \newtheorem{thm}{Theorem}[section]
 \newtheorem{lem}[thm]{Lemma}
 \newtheorem{prop}[thm]{Proposition}
 \theoremstyle{definition}
 \theoremstyle{remark}
 \newtheorem{rem}{Remark}
 \numberwithin{equation}{section}
\DeclareMathOperator{\Bo}{Bo}
\providecommand{\bbs}[1]{\left(#1\right)}
\newcommand{\la}{\langle}
\newcommand{\ra}{\rangle}
\newcommand{\pt}{\partial}
\newcommand{\pcon}{\eta}
\newcommand{\eps}{\varepsilon}
\newcommand{\ud}{\,\mathrm{d}}
\newcommand{\8}{\infty}
\newcommand{\F}{\mathcal{F}}
\newcommand{\mm}{\mathcal{M}}
\begin{document}

\title[Contact line dynamics on rough surface]{Gradient flow formulation and second order numerical method for motion by mean curvature and contact line dynamics on rough surface}

\author[Y. Gao]{Yuan Gao}
\address{Department of Mathematics, Duke University, Durham, NC}
\email{yuangao@math.duke.edu}

\author[J.-G. Liu]{Jian-Guo Liu}
\address{Department of Mathematics and Department of
  Physics, Duke University, Durham, NC}
\email{jliu@math.duke.edu}

\date{\today}

\begin{abstract}
We study the dynamics of a droplet moving on an inclined rough surface in the absence of inertial  and viscous stress effects. In this case, the dynamics of the droplet is a purely geometric motion in terms of the wetting domain and the capillary surface. Using a single  graph representation, we interpret this geometric motion as a gradient flow on a Hilbert manifold.  We propose unconditionally stable first/second order numerical schemes to simulate this geometric motion of the droplet, which is described using motion by mean curvature coupled with moving contact lines. The schemes are based on (i) explicit moving boundaries, which decouple the dynamic updates of the contact lines and the capillary surface, (ii) a semi-Lagrangian method on moving grids and (iii) a predictor-corrector method with a nonlinear elliptic solver upto second order accuracy. For the case of quasi-static dynamics with continuous spatial variable in the numerical schemes,  we prove the stability and convergence of the first/second order numerical schemes.  To demonstrate the accuracy and long-time validation of the proposed schemes,  several challenging computational examples - including breathing droplets, droplets on inhomogeneous rough surfaces and quasi-static Kelvin pendant droplets - are constructed and compared with exact solutions to quasi-static dynamics obtained by desingularized differential-algebraic system of equations (DAEs).  
\end{abstract}

\keywords{Contact angle hysteresis, surface tension, wetting, convergence analysis}
\subjclass[2010]{35R35, 35K93, 65M06, 76T30}

\maketitle
\section{Introduction}
The dynamics and equilibrium of a  droplet  on a substrate are important  problems with many practical applications such as coating, painting in industries and the adhesion of vesicles in 	biotechnology. The capillary effect, which contributes the leading behaviors of the geometric motion of a small droplet, is characterized by the surface tension on interfaces separating two different physical phases. Particularly, the capillary effect greatly affects the dynamics of the contact angle and the  contact line of a droplet, where three phases (gas, liquid and solid) meet.  Dating back to 1805, \textsc{Young} introduced the notion of mean curvature to study the contact angle of a capillary surface and proposed Young's equation for the contact angle, between the capillary surface and the solid substrate,  of a static droplet. 
The geometric motion of a dynamic droplet is more challenging 
 and extensively studied in the literature at the modeling \cite{Davis_1980, deGennes_1985, de2013capillarity, doi2013soft, Qian_Wang_Sheng_2006}, analysis \cite{Finn_Shinbrot_1988, Grunewald_Kim_2011, Caffarelli_Mellet_2007, Caffarelli_Salsa_2005,  Freire_2010, Kim_Mellet_2014, Feldman_2018, Feldman_Kim_2018},  and computations level  \cite{ Yokoi_Vadillo_Hinch_Hutchings_2009, Schulkes_1994, pozrikidis_stability_2012, Sui_Ding_Spelt_2014}.

In this paper, we study the dynamics of a droplet placed on an inclined rough surface using vertical/horizontal graph representation. The contact angle hysteresis  depends on the instantaneous contact angle of the droplet and the spatial heterogeneity of the substrate. Due to the roughness of the hydrophilic/hydrophobic substrate  and moving contact lines, which leads to constant changes of local slope of the substrate,  interesting phenomena such as pinning or capillary rise of the droplet will occur.     After deriving the governing equations via gradient flows on a manifold, we propose first/second order unconditionally stable  numerical schemes for the dynamics of the droplet and provide the first/second order convergence analysis for the quasi-static dynamics.   Then we perform the accuracy check using quasi-static dynamics  and conduct several challenging examples to  accurately simulate the phenomena mentioned above.

{\blue Below, we give a more detailed  outline of the results and strategies of the present paper.}

{\blue First, we give a kinematic description of a droplet using an incompressible potential flow,  and derive the governing equations in a gradient flow formulation using a free energy including capillary effect and gravitational effect, and a Rayleigh dissipation function.} After neglecting the inertial effect and the contribution of viscous stress inside the droplet, the dynamics of the droplet becomes a purely geometric motion, i.e.,  motion by mean curvature of the capillary surface coupled with motion of the contact lines;  see Section \ref{sec2}. Using a single vertical/horizontal graph representation,   we give a gradient flow formulation of the dynamic droplet by regarding the geometric motion of this droplet as a trajectory on a  Hilbert manifold with boundary, where the obstacle occurs; see the resulting governing equations in Section \ref{sec2.4eq} and derivations in Appendix \ref{app_model}. Gradient flows on manifolds and the corresponding interpretation of minimizing movement with proper metrics have been the focus of recent research in both analytic and numerical aspects \cite{AGS, almeida2012mean,  Mielke_2015, Muntean_Rademacher_Zagaris_2016}.  To completely describe the dynamics of the droplet, a  free energy and a Riemannian metric (dissipation potential) in the gradient flow structure will be specific in different physical settings \cite{Davis_1980, Qian_Wang_Sheng_2006, Alberti_DeSimone_2011, doi2013soft}.   We also emphasize that there is a rich literature on  droplets  with different physical effects, such as viscosity  stress inside the droplet, Marangoni effect, electromagnetic fields, electric fields  or surfactant; see for instance \cite{Qian_Wang_Sheng_2006, ren2007boundary,  li2010augmented,  xu2014level,   xu2016variational, lai2010numerical, Sui_Ding_Spelt_2014, Klinteberg_Lindbo_Tornberg_2014}.

The dynamics of the wetting domain for a 3D droplet is a challenging problem due to the geometric complexity. For example, cusp/corner singularity, topological changes such as merging and splitting.  We refer to \cite{Grunewald_Kim_2011, feldman_dynamic_2014} for the studies of weak solutions to quasi-static  contact line dynamics in the case that motion by mean curvature of the capillary surface is replaced by a Poisson equation.    For the original mean curvature case, we also refer to \cite{Alberti_DeSimone_2011, Feldman_Kim_2018} and the references therein  for quantitative/qualitative stability theory of static droplets on rough surface.
 For the quasi-static dynamics where the capillary surface is determined by an elliptic equation, there are many analysis results on the global existence and homogenization problems; see \cite{caffarelli2006homogenization, Grunewald_Kim_2011, Kim_Mellet_2014, feldman_dynamic_2014} for capillary surface described by a harmonic equation and see \cite{Caffarelli_Mellet_2007, Caffarelli_Mellet_2007a, Chen_Wang_Xu_2013, Feldman_Kim_2018} for capillary surface described by spatial-constant mean curvature equation.

Next, we propose  unconditionally stable numerical schemes with second order accuracy for the 2D droplet dynamics described by the motion by mean curvature and the moving contact lines; see Section \ref{sec3}.  The unconditionally stable schemes are based on an explicit boundary update, which decouples the computations for the dynamics of the contact lines and the capillary surface. In Section \ref{convergence}, we will give the stability and convergence analysis for the quasi-static dynamics based on the explicit boundary updates and some properties/dependence formulas of the quasi-static profile; see Proposition \ref{prop_st}, Proposition \ref{pertur_pf0} and Theorem \ref{thm_con}, \ref{thm_con_2}.
For the full dynamic problem, the challenge of moving grids is handled by a  semi-Lagrangian method with second order accuracy; see Section \ref{sec_2nd_semiL}. To achieve a second order scheme efficiently, we also design a predictor-corrector scheme with a nonlinear elliptic solver upto second order accuracy; see Section \ref{sec-implicit}.  

Third, we construct several challenging and important computational examples to demonstrate the accuracy, validity and efficiency of our numerical schemes; see Section \ref{sec4}. (i) Using a quasi-static solution given by desingularized differential-algebraic system of equations (DAEs),  we check the second order accuracy in space and time for our numerical schemes. (ii) We construct a breathing droplet with a closed formula solution, and we use it to show the long-time validity of our numerical schemes.  (iii) We use   complicated inclined rough surfaces such as the classical Utah teapot \cite{10.1145/97879.97916} to demonstrate the contact angle hysteresis of a droplet on inhomogeneous rough surface and the competition between capillary effect and gravitational effect. (iv) Using a horizontal graph representation and a desingularized formula for quasi-static dynamics, we also present  simulations for Kelvin pendant drops with repeated bulges in the volume preserving case. For recent studies of steady solutions to Kelvin pendant droplet problem, we refer to \cite{ pozrikidis_stability_2012}; see also \cite{Finn_Shinbrot_1988, Schulkes_1994}. 

{\blue
Now we comment on existing references on different models and computations for droplets with hydrodynamic effects.
Arguably,  the contact line dynamics coupled with hydrodynamic effects is one of the  mostly  studied subjects in fluid dynamics.  The contact lines experience an infinite driven force to overcome the  no-slip boundary condition due to viscosity inside droplets. 
Various specific physical models describing  slip boundary conditions and contact angle dynamics need to be imposed. For instance, the well-known Navier slip boundary conditions   first used by \cite{dussan1979spreading} and the parameters in the Navier boundary condition can be determined by molecular dynamics \cite{ren2007boundary}.   A widely used moving contact line model coupled with fluids is derived in  \cite{ren2007boundary}; see \eqref{app_E} in Appendix \ref{app_model}.   In the  coupled hydrodynamic model of droplets, the normal traction induced by the fluids is balanced with mean curvature $F_n = \gamma_{lg} H$, while in  the purely geometric motion, the capillary surface is simply motion by mean curvature $-\zeta v_n=\gamma_{lg} H$ with friction coefficient $\zeta$.  There are  many numerical methods for the  coupled hydrodynamic model of droplets. The   level set method  is first used in \cite{li2010augmented} and with reinitialization in \cite{xu2016reinitialization} to simulate the moving contact lines.   Various other related numerical methods are comprehensively reviewed in 2014 by \cite{Sui_Ding_Spelt_2014},
 c.f. the  front-tracking method \cite{Leung_Zhao_2009, Chamakos_Kavousanakis_Boudouvis_Papathanasiou_2016}, fixed domain method \cite{Morland_1982},  the level set method \cite{Zhao_Chan_Merriman_Osher_1996, Ding_Spelt_2007, Sui_Spelt_2013}, the phase-field method \cite{GAO20121372, Jiang_Bao_Thompson_Srolovitz_2012, Wang_Jiang_Bao_Srolovitz_2015} or the threshold dynamic method  \cite{Esedoglu_Tsai_Ruuth_2008, Lu_Otto_2015, Wang_Wang_Xu_2019}.

Instead, we focus on  the purely geometric motion of droplets, in which the full dynamics    is the motion by mean curvature of the capillary surface and the contact line dynamics; see Section \ref{sec2}. 
We mention particularly numerical  methods that are closely related to the purely geometric motion case.   The mean curvature flow with obstacles is theoretically studied in \cite{almeida2012mean} in terms of weak solution constructed by a minimizing movement (implicit time-discretization).  The threshold dynamics method based on characteristic functions are first used  in \cite{xu2017efficient,  Wang_Wang_Xu_2019} to simulate the contact line dynamics, which is particularly efficient and can be easily adapted to droplets with topological changes.  They  extended the original threshold method for mean curvature flows to  the case with a solid substrate and a free energy with multi-phase surface tensions,  in the form of obstacle problems. However, since they do not enforce the contact line mechanism \cite{deGennes_1985, ren2007boundary}, i.e., relation between contact line speed and unbalanced Young's force $v_{cl}=\frac{\gamma_{lg}}{\mathcal{R}}\left(\cos \theta_Y -\cos \theta_{cl}\right)$, thus their computations on contact angle dynamics  are different with the present paper and only the  equilibrium Young's angle $\theta_Y$ is accurately recovered.
Besides, the level set method developed in \cite{li2010augmented, xu2016reinitialization} can not be directly used and also can not deal with rough surfaces.
Furthermore, to the best of our knowledge, the existing numerical methods for the contact line dynamics problem, including the level-set method, phase field method and threshold dynamics method, can not give the second order in time convergence as here  proved in Theorem \ref{thm_con_2}.
}

 The rest of this paper is organized as follows. In Section \ref{sec2}, we  describe the purely geometric motions of a  dynamic/quasi-static dynamic  droplet on inclined rough surfaces.  With a specific free energy and a dissipation function, the derivation of governing equations using  a gradient flow formulation is given in Appendix \ref{app_model}. In Section \ref{sec3}, we propose the  unconditionally stable 1st and 2nd order schemes for droplet dynamics on inclined rough surfaces. The stability and convergence analysis for the quasi-static equations are given in Section \ref{convergence}. The truncation error estimates and peusdo-codes for 1st/2nd order schemes are given in Appendix  \ref{appA} and Appendix \ref{appB} respectively. In Section \ref{sec4}, we give some accuracy validations of our schemes compared with the quasi-static solution and demonstrate several challenging  examples such as droplets in a teapot, breathing droplets and Kelvin pendant droplets.


\section{Derivation of purely geometric dynamics of partially wetting droplets}\label{sec2}
 In this section,
 we first revisit the kinematic equations of a droplet described by an incompressible potential flow  and the dynamic mechanism driven by a free energy including capillary effect and gravitational effect, and a Rayleigh dissipation function; see Section \ref{sec2.1_n}, Section \ref{sec2.2_n} and Section \ref{sec2.3_n}. After neglecting the inertial effect and the contribution of viscous stress inside the droplet, the dynamics of the droplet becomes a purely geometric motion, i.e., motion by mean curvature of the capillary surface  and motion of the contact lines. In this case, using a vertical graph representation
 we describe the geometric motion of a droplet as a gradient flow  on a Hilbert manifold; see the resulting governing equations for (quasi-static) dynamics of droplets with volume constraint  in Section \ref{sec2.4eq}. Detailed derivations for the motion of a 3D droplet driven by a  specific free energy and a Riemannian metric (dissipation function) are given in Appendix \ref{app_model}. 
The  governing equations of a 2D droplet on an inclined rough surfaces are  presented in Section \ref{sec3.1}.
 When the vertical graph representation is broken, we replace it  by a horizontal one; see Section \ref{sec4.3}. 

{ 
 \subsection{Kinematic description of a droplet on a solid substrate}\label{sec2.1_n}
 In this section, we first give a kinematic description of a droplet using a vertical graph function.
More precisely, 
we study the motion of a 3D droplet placed on a substrate $\{(x,y,z); z=0\}$. Assume the wetting domain is $(x,y)\in D \subset\mathbb{R}^2$ with boundary $\Gamma:=\pt D$. The droplet domain is identified by the area $$A:=\{(x,y, z);~ (x,y)\in D, \,0<z<u(x,y),\, \, u|_{\pt D}=0\}.$$ Assume the fluid inside the droplet follows an incompressible potential flow  with velocity $v(x,y,x)=\nabla \phi(x,y,z)$ and constant density $\rho$.   The motion of the shape of this droplet is described by the following two kinematic boundary conditions. The motion of the capillary surface on $\pt A\cap \{z>0\}$ with the outer normal $n$ is described by the normal speed
\begin{equation}
v_n(x,y,z) := n \cdot \nabla \phi(x,y,z), \quad (x,y,z)\in \pt A\cap \{z>0\}
\end{equation}
  and  the motion of $\Gamma$ (physically known as contact lines) with outer normal $n_{cl}$ in $x$-$y$ plane is described by the contact line speed
\begin{equation}
 v_{cl}(x,y):=n_{cl} \cdot \nabla_{x,y}\phi(x,y,0), \quad (x,y)\in \Gamma.
\end{equation}

Notice the incompressible  potential flow satisfies $\nabla \cdot v=0$. Hence together with kinematic boundary condition, we have
\begin{equation}\label{pot_f}
\begin{aligned}
\Delta \phi = 0, \quad  \text{ in } A;\qquad 
\pt_n \phi =\left\{  \begin{array}{cc}
v_n, \quad& \text{ on } \pt A \cap \{u>0\},\\
 0,  \quad& \text{ on } \pt A \cap \{u=0\}. 
\end{array}\right.
\end{aligned}
\end{equation} 
 {\blue For the dynamic wetting domain and droplets, all the quantities $A(t), D(t), u(x,y,t), \Gamma(t)$ depend on time variable $t$.} The compatibility condition for \eqref{pot_f} is $\int_{\pt A} v_n \ud s=0$, which turns out to be equivalent to  the volume preserving constraint. 
Indeed, using the normal speed $v_n=\frac{\pt_t u}{\sqrt{1+|\nabla u|^2}}$ in the  graph representation, we have
\begin{equation}
\int_{\pt A(t)} v_n \ud s = \int_{D(t)} \frac{\pt_t u}{\sqrt{1+|\nabla u|^2}} \sqrt{1+|\nabla u|^2} \ud x \ud y= \int_{D(t)} \pt_t u \ud x \ud y.
\end{equation}
Then by $u(x,y,t)=0$ on $\Gamma(t)$ and the Reynolds  transport theorem, we have
\begin{equation}\label{compa}
\int_{D(t)} \pt_t u(x,y,t) \ud x \ud y = \frac{\ud}{\ud t} \int_{D(t)} u(x,y,t) \ud x \ud y=0,
\end{equation}
where the last equality follows from the  volume preserving constraint
$
\frac{\ud}{\ud t} \int_{D(t)} u(x,y,t) \ud x \ud y=0.
$
 Hence in the volume preserving case, the motion of the droplet can be completely described by the motion of capillary surface $u(x,y,t)$ and the motion of  contact domain $D(t)$.

\subsection{Free energy for the droplet and Young's angle}\label{sec2.2_n} To give a specific free energy, we will follow the same notations and terminologies as in the classical book of \textsc{de Gennes} \cite{de2013capillarity}.
To consider the interactions between the three phases of materials: gas, liquid, and solid, denote  $\gamma_{sl}$ ($\gamma_{sg}, \gamma_{lg}$ resp.) as the interfacial surface energy density between solid-liquid phases (solid-gas, liquid-gas resp.). $\gamma_{sl}, \gamma_{sg}, \gamma_{lg}>0$ are also known as the surface tension coefficients. Surface tension plays the most important role in the dynamics and equilibrium of the droplet. {\blue Surface energy consists of the contributions from   the capillary surface (with surface tension $\gamma_{lg}$) and the wetting domain of the droplet (with the relative surface tension $\gamma_{sl}-\gamma_{sg}$).} Surface energy between liquid and gas   is the excess energy  due to the one half lower coordination number (in the mean field approximation) of  molecules at the surface compared with those sitting in the liquid bulk (\textsc{Doi} \cite{doi2013soft}).  
We take the total free energy  of the droplet as the summation of surface energy, gravitational energy and kinetic energy
\begin{equation}\label{energy00}
\begin{aligned}
\F=& \gamma_{lg} \int_{\pt A \cap \{u>0\}} \ud s + (\gamma_{sl}-\gamma_{sg}) \int_{D} \ud x\ud y+ \rho g  \int_{D}\frac{u^2}{2} \ud x \ud y+ \frac{\rho}{2} \int_{A} |\nabla \phi|^2 \ud x \ud y \ud z,\\
=&    \gamma_{lg} \int_{D} \sqrt{1+ |\nabla  u|^2} \ud x \ud y + (\gamma_{sl}-\gamma_{sg})\int_{D} \ud x\ud y+ \rho g  \int_{D}\frac{u^2}{2} \ud x \ud y + \frac{\rho}{2} \int_{A} |\nabla \phi|^2 \ud x \ud y \ud z ,\\
\end{aligned}
\end{equation}
where $\rho$ is the density of the liquid, $g$ is  the gravitational acceleration and $\phi$ satisfies \eqref{pot_f}.  {\blue For droplets with small Weber number, the contribution of the fluid's inertia is small  compared to  the capillary effect.}  Thus for small droplets, we  neglect the inertia effect and drop the last term in the free energy
\begin{equation}\label{energy}
\begin{aligned}
\F=    \gamma_{lg} \int_{D} \sqrt{1+ |\nabla  u|^2} \ud x \ud y + (\gamma_{sl}-\gamma_{sg})\int_{D} \ud x\ud y+ \rho g  \int_{D}\frac{u^2}{2} \ud x \ud y .\\
\end{aligned}
\end{equation}
$\F$ has units of energy. 
We neglect other effects, such as viscosity stress inside the droplet, Marangoni effect (solutocapillary and thermocapillary effect), electromagnetic fields, evaporation and condensation, etc. {\blue The free energy \eqref{energy} for small droplets in the current setup is particular useful for  
practical applications such as coating, painting in industries and the adhesion of vesicles in biotechnology.}

The competition between the three surface tension coefficients will determine uniquely the steady shape of the droplet  with  a fixed volume $V$.  
Let the density of gas outside the droplet be $\rho_0=0$. We denote the capillary coefficient as $\varsigma:= \frac{\rho g}{\gamma_{lg}}$ and the capillary length as $L_c:=\frac{1}{\sqrt{\varsigma}}.$ For a droplet with volume $V$, its  equivalent  length (characteristic length)  $L$ is defined as $V=\frac{4\pi}{3}L^3$ in 3D and $V=\pi L^2$ in 2D.  The Bond number 
$\Bo:=(\frac{L}{L_c})^2=\varsigma L^2$ shall be small enough to observe the capillary effect \cite{de2013capillarity}. Notice for  simplicity in presentations of the governing equations, we allow $\varsigma<0$ in the case of pendant droplet. Hence when $\varsigma<0$, the capillary length is $L_c=\frac{1}{\sqrt{|\varsigma|}}$ and the Bond number is $\Bo=(\frac{L}{L_c})^2=|\varsigma| L^2$.

{\blue Define  $\sigma$ as the  relative adhesion coefficient between the liquid and the solid
$$\sigma:= \frac{\gamma_{sl}-\gamma_{sg}}{\gamma_{lg}}.$$
}
Adhesive forces between the liquid and the solid cause the liquid drop to spread across the surface (called a  partially wetting liquid on a hydrophilic surface), while cohesive forces within the liquid cause the drop to ball up and avoid contact with the surface (called dewetting or non-wetting liquid on a hydrophobic surface). 
 { We remark that the spreading parameter $S:=\gamma_{lg}\left( \frac{\gamma_{sg}-\gamma_{sl}}{\gamma_{lg}} -1\right)$ could be positive in the so-called total wetting regime \cite[Section 1.2.1]{deGennes_1985}. In this case, the liquid spreads completely to a film of nanoscopic height, which can not be described using contact angle dynamics in the current paper.} 
 For the present contact angle dynamics setup, $|\sigma|\leq 1$.
 By Young's equation \cite{young1805iii}, the equilibrium contact angle 
$\theta_Y$ is determined by the Young's angle condition
\begin{equation}\label{young}
\cos \theta_Y= \frac{\gamma_{sg}-\gamma_{sl}}{\gamma_{lg}} = -\sigma.
\end{equation}
{\blue We call it the partially wetting liquid case (hydrophilic surface) if
$-1<\sigma= -\cos \theta_Y\leq 0,\, 0<\theta_Y\leq \frac{\pi}{2},$
while we call it the non-wetting liquid case (hydrophobic surface) if
$0\leq \sigma= -\cos \theta_Y<1, \, \frac{\pi}{2}\leq \theta_Y< \pi.$
The case $\theta_Y=0, \, \sigma=-1$ is called completely wetting.
}

\subsection{Friction force for the motion of droplet and Rayleigh dissipation function}\label{sec2.3_n}
There are three types of friction and viscosity force on the droplet. First,  the contact line friction force density is given by $-\mathcal{R}v_{cl}n_{cl}=-\mathcal{R}(n_{cl} \cdot \nabla_{x,y} \phi) n_{cl}$, where in 3D, $\mathcal{R}$ is the friction coefficient per unit length for the contact line with the units of  mass/(length $\cdot$ time). Second,  the  friction force density on the capillary surface is given by $-\zeta v_n n$, where in 3D, $\zeta$ is the  coefficient per unit area for the capillary surface with the units of mass/(area $\cdot$ time). Third,  the viscosity stress inside the droplet is $\mu(\nabla v+ \nabla v^\bot)$ where $\mu$ is the dynamic viscosity.  We neglect  the viscosity effect inside the droplet. Then the Rayleigh dissipation function  (with the unit of work)  is given by \cite{goldstein2002classical}
\begin{equation}\label{RQ}
Q=\frac{\mathcal{R}}{2} \int_{\Gamma (t)} |v_{cl}|^2 \ud s +  \frac{\zeta}{2} \int_{\pt A(t) \cap \{u>0\}} |v_n|^2   \ud s.
\end{equation} 
After neglecting the  kinetic energy and viscosity dissipation inside the droplet, the dynamics of the droplet is purely a geometric motion driven by the free energy \eqref{energy} and Rayleigh's dissipation function \eqref{RQ}. Therefore,  the motion of the droplet can be completely described by the geometric configurations: the boundary of wetting domain $\Gamma(t)$ and capillary surface $u(x,y,t)$, instead of by velocity potential $\phi$. 

\begin{figure}
\includegraphics[scale=0.55]{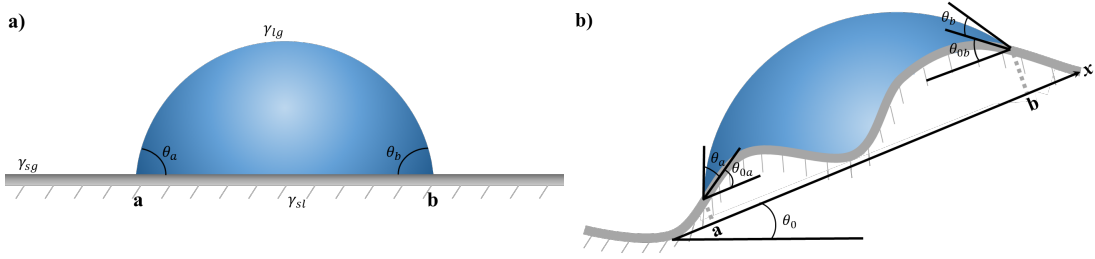} 
\caption{(a) Illustration of contact angles $\theta_a, \theta_b$; (b) Illustration of contact angles $\theta_a, \theta_b$ and the local slopes of the rough surface $\theta_{0a}, \theta_{0b}$.} \label{fig:ill}
\end{figure}

\subsection{Dynamics of a droplet derived by gradient flow on manifold}\label{sec2.4eq}
In Appendix \ref{app_model}, we will derive  the gradient flow of $\F(\pcon), \, \eta(t)=(\Gamma(t), u(t))$ on a Hilbert manifold  with respect to a Riemannian metric $g_{\pcon}$ suggested by \eqref{RQ}.
For a  3D single droplet, this yields the following governing equations
\begin{equation}\label{eq_nD}
\begin{aligned}
\frac{\zeta}{\gamma_{lg}} \frac{\pt_t u}{\sqrt{1+|\nabla u|^2}}= \nabla \cdot \left( \frac{\nabla u}{\sqrt{1+ |\nabla u|^2}} \right)-\varsigma u+\frac{1}{\gamma_{lg}}\lambda(t), \quad \text{ in } D(t),\\
 u(\Gamma(t), t)=0,\\
\frac{\mathcal{R}}{\gamma_{lg}}v_{cl}(t)= -\sigma-\frac{1}{\sqrt{1+|\nabla u|^2}},\quad \text{ on } \Gamma(t),\\
\int_{D(t)} u(x,y,t) \ud x \ud y = V,
\end{aligned}
\end{equation}
with initial data $(\Gamma(0), u(x,y,0))$ and initial volume $V$. When there are  topological changes, including merging and splitting of droplets, \eqref{eq_nD} becomes parabolic variational inequality; see Appendix \ref{app_model} for more discussions.

In the following proposition, we summarize the dissipation relation, the quasi-static dynamics and the contact line speed mechanism. The proof of Proposition \ref{Prop2.1} is given in Appendix \ref{appA.1} for completeness.
\begin{prop}\label{Prop2.1}
Assume $(\Gamma(t), u(x,y,t), \lambda(t))$, $t\in[0,T]$ are  solutions to \eqref{eq_nD} with regularities  $u\in H^2_0(D(t))$ and $\Gamma(t)$ is a $C^1$ Jordan curve with a finite perimeter. Then we have
\begin{enumerate}[(i)]
\item  the velocity relation on the contact line
\begin{equation}
\pt_t u = -(\nabla  u \cdot n_{cl}) v_{cl} = |\nabla  u| v_{cl}, \quad  \text{ on } \Gamma(t);
\end{equation}
\item  the energy-dissipation relation
\begin{equation}\label{dissipation_o}
\frac{\ud}{\ud t} \F = -\mathcal{R}\int_{\Gamma(t)} v_{cl}^2 \ud s - \zeta \int_{D(t)} \frac{(\pt_t u)^2}{\sqrt{1+|\nabla u|^2}} \ud x \ud y;
\end{equation}
\item if $\zeta=0$, the resulting quasi-static dynamics is a gradient flow for $\Gamma(t)$
\begin{equation}
\mathcal{R} v_{cl} =-\frac{\delta \F}{\delta \Gamma}=  -\gamma_{lg}\left(\sigma+\frac{1}{\sqrt{1+|\nabla u|^2}})\right ),\quad  \text{ on } \Gamma
\end{equation}
with $u\in H_0^2(D(t))$ solving $\nabla \cdot \left( \frac{\nabla u}{\sqrt{1+ |\nabla u|^2}} \right)-\varsigma u+\frac{1}{\gamma_{lg}}\lambda(t)=0$ in $D(t)$;
\item the equilibrium contact angle $\theta_{cl}^*=\theta_Y$ and on $\Gamma$
\begin{equation}
v_{cl}=\frac{\gamma_{lg}}{\mathcal{R}}(\cos \theta_Y-\cos \theta_{cl})
~\left\{\begin{array}{c}
>0, \quad \theta_{cl}>\theta_Y,\\
<0, \quad  \theta_{cl}<\theta_Y.
\end{array}
\right.
\end{equation}
\end{enumerate}

\end{prop}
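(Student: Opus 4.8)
The plan is to establish (i)--(iv) in that order, since (i) is needed for (ii), (iii) is extracted from the computation behind (ii), and (iv) is a trigonometric reading of the third equation in \eqref{eq_nD}. For (i), I differentiate the identity $u\equiv 0$ on $\Gamma(t)$ along the moving curve: writing a boundary point as $\gamma(t)\in\Gamma(t)$ with velocity $\dot\gamma=v_{cl}n_{cl}+(\text{tangential})$, the chain rule gives $\pt_t u+\nabla u\cdot\dot\gamma=0$; since $u$ vanishes along $\Gamma(t)$ its tangential derivative is zero, so $\nabla u\cdot\dot\gamma=v_{cl}(\nabla u\cdot n_{cl})$ and hence $\pt_t u=-(\nabla u\cdot n_{cl})v_{cl}$. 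Finally $u\ge0$ in $D(t)$ with $u=0$ on $\Gamma(t)$ forces $\nabla u$ to be antiparallel to $n_{cl}$ there, i.e.\ $\nabla u\cdot n_{cl}=-|\nabla u|$, which gives $\pt_t u=|\nabla u|v_{cl}$.

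For (ii) I differentiate $\F$ from \eqref{energy} term by term via the Reynolds transport theorem, the outward normal speed of $\Gamma(t)$ being $v_{cl}$ by the kinematic description of Section \ref{sec2.1_n}. The gravitational term contributes only its bulk part since $u=0$ on $\Gamma(t)$; the flat wetted-area term contributes $\gamma_{lg}\sigma\int_\Gamma v_{cl}\ud s$ using $\gamma_{sl}-\gamma_{sg}=\gamma_{lg}\sigma$; and for the capillary-area term one integrates by parts in $D(t)$ and uses (i) together with $\nabla u\cdot n_{cl}=-|\nabla u|$, so the integration-by-parts boundary term $-\gamma_{lg}\int_\Gamma\frac{|\nabla u|^2}{\sqrt{1+|\nabla u|^2}}v_{cl}\ud s$ combines with the Reynolds boundary term $\gamma_{lg}\int_\Gamma\sqrt{1+|\nabla u|^2}v_{cl}\ud s$ into the clean $\gamma_{lg}\int_\Gamma\frac{1}{\sqrt{1+|\nabla u|^2}}v_{cl}\ud s$. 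Collecting terms, $\frac{\ud}{\ud t}\F=\gamma_{lg}\int_{D(t)}\big(-\nabla\cdot(\tfrac{\nabla u}{\sqrt{1+|\nabla u|^2}})+\varsigma u\big)\pt_t u\ud x\ud y+\gamma_{lg}\int_\Gamma\big(\sigma+\tfrac{1}{\sqrt{1+|\nabla u|^2}}\big)v_{cl}\ud s$. Substituting the bulk equation of \eqref{eq_nD} turns the first integral into $-\zeta\int_{D(t)}\frac{(\pt_t u)^2}{\sqrt{1+|\nabla u|^2}}\ud x\ud y+\lambda(t)\int_{D(t)}\pt_t u\ud x\ud y$, whose last term vanishes by the volume-preserving identity \eqref{compa}; substituting the contact-line equation of \eqref{eq_nD} turns the boundary integral into $-\mathcal{R}\int_\Gamma v_{cl}^2\ud s$. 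This is \eqref{dissipation_o}. I expect this step to be the crux: one must get the Reynolds transport and integration-by-parts boundary contributions to cancel exactly, which is where the $H^2_0(D(t))$-regularity of $u$, the $C^1$ Jordan-curve hypothesis on $\Gamma(t)$, part (i), and \eqref{compa} all enter, and one must correctly identify the contact-line speed with the normal speed of $\Gamma(t)$.

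For (iii), when $\zeta=0$ the bulk equation of \eqref{eq_nD} reduces to the elliptic equation $\nabla\cdot(\tfrac{\nabla u}{\sqrt{1+|\nabla u|^2}})-\varsigma u+\tfrac{\lambda(t)}{\gamma_{lg}}=0$ in $D(t)$ with $u\in H^2_0(D(t))$, so $u$ and $\lambda(t)$ (fixed by the volume constraint) are slaved to $\Gamma(t)$; feeding this and \eqref{compa} into the identity for $\frac{\ud}{\ud t}\F$ derived in (ii) annihilates the bulk integral, leaving $\frac{\ud}{\ud t}\F=\int_\Gamma\gamma_{lg}\big(\sigma+\tfrac{1}{\sqrt{1+|\nabla u|^2}}\big)v_{cl}\ud s$ for an arbitrary admissible normal perturbation $v_{cl}$ of $\Gamma$, whence $\frac{\delta\F}{\delta\Gamma}=\gamma_{lg}\big(\sigma+\tfrac1{\sqrt{1+|\nabla u|^2}}\big)$ and the contact-line equation reads $\mathcal{R}v_{cl}=-\frac{\delta\F}{\delta\Gamma}$, the claimed gradient flow. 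For (iv), at a contact point the liquid--gas interface is the graph of $u$; in the cross-section spanned by $n_{cl}$ and the vertical direction the interior angle $\theta_{cl}$ between interface and substrate satisfies $\tan\theta_{cl}=|\nabla u|$, hence $\cos\theta_{cl}=\tfrac{1}{\sqrt{1+|\nabla u|^2}}$, and since $-\sigma=\cos\theta_Y$ by Young's relation \eqref{young}, the contact-line equation of \eqref{eq_nD} becomes $v_{cl}=\tfrac{\gamma_{lg}}{\mathcal{R}}(\cos\theta_Y-\cos\theta_{cl})$; as $\cos$ is strictly decreasing on $[0,\pi]$, this is positive iff $\theta_{cl}>\theta_Y$, negative iff $\theta_{cl}<\theta_Y$, and zero iff $\theta_{cl}=\theta_Y$, which also identifies the equilibrium angle $\theta_{cl}^*=\theta_Y$.
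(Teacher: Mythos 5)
Your proposal is correct and follows essentially the same route as the paper: (i) via differentiating $u=0$ along the moving contact line, (ii)--(iii) via the Reynolds transport theorem plus integration by parts, using (i) to merge the boundary contributions and the volume constraint \eqref{compa} to kill the $\lambda$ term, and (iv) via $\cos\theta_{cl}=1/\sqrt{1+|\nabla u|^2}$ together with Young's relation \eqref{young}. The only cosmetic difference is that the paper packages the first-variation computation through the generic density formula \eqref{var} for $G(u,\nabla u)$ (and phrases (iii) as a gradient flow on the reduced manifold with the quasi-static metric), whereas you carry out the same cancellation term by term for the specific energy \eqref{energy}.
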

For the cases that singularity occurs on $\Gamma(t)$, the solution $(\Gamma, u)$ shall be understood in the viscosity sense with some geometric assumptions on the wetting domain $D(t)$, which is beyond the scope of this paper.
We will use the statement (iii) above, together with a DAEs solver, to check the accuracy of our numerical schemes proposed in next section.

The dimensional analysis for a 2D droplet is given  in Appendix \ref{app_model}. The resulting dimensionless equations  in 2D are
\begin{equation}\label{wet-phy0}
\begin{aligned}
\beta\frac{\pt_t u(x,t)}{\sqrt{1+ (\pt_x u)^2}}= \frac{\pt }{\pt x}\left( \frac{\pt_x u}{\sqrt{1+ (\pt_x u)^2}} \right)-\kappa u+\hat{\lambda}(t), \quad x\in(a(t),b(t)),\\
 u(a(t), t)=u(b(t), t)=0,\\
a'(t)= \sigma+\frac{1}{\sqrt{1+(\pt_x u)^2}},\quad x=a(t),\\
b'(t)= -\sigma-\frac{1}{\sqrt{1+(\pt_x u)^2}},\quad x=b(t),\\
\int_{a(t)}^{b(t)} u(x,t) \ud x = V.
\end{aligned}
\end{equation}
Here, with typical length scale $L$ and time scale $T$, the coefficients  $\kappa=L^2 \varsigma$, 
 $\hat{\lambda}=\frac{L}{\gamma_{lg}}{\lambda}$, $\beta= \frac{\zeta L^2}{\gamma_{lg} T}$ and typical 2D volume \cite{de2013capillarity}   $V=\pi$  are all dimensionless. The capillary number for the capillary surface $\beta$ measures the ratio between the frictional force on capillary surface and surface tension, and indicates  the capillary relaxation time on the capillary surface. The Bond number $\kappa$ measures the ratio between the gravitational force and surface tension.
{\blue In the remaining of this paper, we will use the dimensionless formulation \eqref{wet-phy0} after dropping hat in $\hat{\lambda}$.}


\subsection{Governing equations for a single 2D droplet on a rough  and inclined surface}\label{sec3.1} In this section, with some modifications for the free energy, the Riemannian metric and the same derivation of the gradient flow formulation as in Appendix \ref{app_model}, we summarize  the governing equations for a single droplet on a rough  and inclined surface.

{\blue Given a rough  solid surface, we follow the convention for studying droplets on an inclined surface  and choose the Cartesian coordinate system built on an inclined plane with effective inclined angle $\theta_0$ such that $-\frac{\pi}{2}<\theta_0<\frac{\pi}{2}$, i.e.,  $(\tan \theta_0) x$ is the new $x$-axis we choose; see Fig \ref{fig:ill} (b).
With this Cartesian coordinate system,}   the rough surface is described by the graph of a function $w(x)$ and the droplet is then described by
\begin{equation}
A:= \{(x,y); a\leq x\leq b , w(x)\leq y\leq u(x)+w(x) \}. 
\end{equation}

The motion of this droplet is described by the relative height function (capillary surface) $u(x,t)\geq 0$ and  partially wetting domain $a(t)\leq x \leq b(t)$ with free boundaries $a(t), b(t).$  Consider a manifold
{\blue
\begin{equation}
\mm:= \{\eta=(a, b, u(x));~a,b \in \mathbb{R}, a\leq b,\, u(x)\geq 0,\, u(x)\in H_0^1(a,b)\}.
\end{equation}
  For any point $\eta=(a,b, u(x))\in \mm$, the tangential plane  $T_{\pcon} \mm$ at $\eta$ is 
\begin{equation}
\begin{aligned}
T_{\pcon} \mm:=&
\{q=(\alpha, \beta, v(x));\, \alpha, \beta \in \mathbb{R}, v(x) \in H^1(a,b), \\
&\qquad v(x)+u(x)\geq 0, ~ v(a)=  -\pt_x u(a) \alpha,\, v(b)= - \pt_x u(b) \beta\}.
\end{aligned}
\end{equation}
Given any $q_1=(\alpha_1, \beta_2, v_1), q_2=(\alpha_2, \beta_2, v_2)\in T_{\eta}\mm$, define the Riemannian metric $g_{\pcon}: T_\eta \mm \times T_\eta \mm \to \mathbb{R}$   as
\begin{align}\label{tm_metric}
&g_{\pcon}(q_1, q_2)=\mathcal{R}\alpha_1 \alpha_2 + \mathcal{R}\beta_1 \beta_2+ \zeta \int_{a}^{b}  \frac{v_1 v_2}{\sqrt{1+ (\pt_x (u+w))^2}} \ud x.
\end{align}
The dynamics of a droplet on rough surface can be regarded as  a trajectory $\eta(t)=(a(t),b(t), u(x,t))$ on $\mm$. The tangent direction of the curve $\eta(t)$ is given by
$\eta'(t):=(a'(t), b'(t), \pt_t u)\in T_{\eta(t)}\mm$.
}
%

Now we consider the energy functional $\mathcal{F}: \mm \to \mathbb{R}$ associated with the rough surface
\begin{equation}\label{energy-r}
\begin{aligned}
\F(\pcon)= &\gamma_{lg} \int_{a}^{b} \sqrt{1+ (\pt_x (u+w))^2} \ud x + (\gamma_{sl}-\gamma_{sg}) \int_{a}^{b} \sqrt{1+ (\pt_x w)^2} \ud x\\
&+ \rho g  \int_{a}^{b}\int_{w}^{u+w}(y\cos \theta_0+x\sin\theta_0)\ud y \ud x, 
\end{aligned}
\end{equation}
where $\rho$ is the density of the liquid, $g$ is  the gravitational acceleration.
In the inclined case, for a droplet with volume $V$ in 2D, the effective Bond number is
\begin{equation}\label{bo_in}
\Bo:=\left(\frac{L}{L_c}\right)^2 \cos \theta_0=\varsigma L^2 \cos \theta_0.
\end{equation}
Then by  same derivations as the gradient flow formulation in Appendix \ref{app_model},  with $h(x,t):= u(x,t)+w(x)$,
 the governing equations in the dimensionless form for a single droplet become
\begin{equation}\label{wet-eq-r}
\begin{aligned}
\beta \frac{\pt_t h(x,t)}{\sqrt{1+ (\pt_xh)^2}}= \frac{\pt }{\pt x}\left(  \frac{\pt_xh}{\sqrt{1+ (\pt_xh)^2}}\right)-\kappa (h\cos\theta_0+x \sin\theta_0)+\lambda(t), \quad x\in(a(t),b(t)),\\
 u(a(t), t)=u(b(t), t)=0,\\
a'(t)= \sigma  \sqrt{1+ (\pt_x w)^2}+\frac{1+ \pt_x h  \pt_x w}{\sqrt{1+(\pt_xh)^2}}=\frac{1}{\cos \theta_{0a}}( \cos \theta_a-\cos \theta_Y),\quad x=a(t),\\
b'(t)=- \sigma  \sqrt{1+ (\pt_x w)^2}-\frac{1+ \pt_x h \pt_x w}{\sqrt{1+(\pt_xh)^2}}=-\frac{1}{\cos \theta_{0b}}( \cos \theta_b-\cos \theta_Y),\quad x=b(t),\\
\int_{a(t)}^{b(t)} u(x,t) \ud x = V,
\end{aligned}
\end{equation}
where the two angles are defined as  $\pt_x w|_a = \tan \theta_{0a}$, $\pt_x h|_a= \tan(\theta_{0a}+ \theta_a)$ and $\pt_x w|_b =- \tan \theta_{0b}$ and $\pt_x h|_b= -\tan(\theta_{0b}+ \theta_b)$; see Fig \ref{fig:ill} (b). Recall $\beta, \kappa, V, \lambda$ are all dimensionless. It is easy to check the steady state $a'(t)=b'(t)=0$ recovers  Young's angle condition.

\begin{rem}
For $w(x)=0$, i.e., the surface is a perfect inclined plane with angle $\theta_0$, the derivation above recovers  the model for capillary droplets on an {inclined surface} \cite{finn2012equilibrium, Caffarelli_Mellet_2007, Kim_Mellet_2014}. 
\end{rem}
\begin{rem}
{We remark that changing the variable $x\in[a,b]$ to $x+x_0\in [a+x_0, b+x_0]$, the first equation in \eqref{wet-eq-r} for $\hat{h}(x)=h(x+x_0)$ with the new Lagrangian multiplier $\hat{\lambda}(t)=x_0 \sin \theta_0+\lambda(t)$ is  invariant with respect to the translation $x_0$.  As a consequence, the dynamics and the equilibrium profile do not depend on the coordinates we choose.} More importantly, at the equilibrium, the right hand side in the first equation is exactly the hydrostatic balance \cite[Section 61]{Landau1987Fluid}
\begin{equation}\label{vv}
    -(p+ \rho g h) = \gamma_{lg} \frac{\pt }{\pt x}\left(  \frac{\pt_xh}{\sqrt{1+ (\pt_xh)^2}}\right)-\gamma_{lg}\kappa (h\cos\theta_0+x \sin\theta_0)=\text{const}=-\gamma_{lg}\lambda,
\end{equation}
where we have chosen by convention the pressure outside of the drop to be zero and inside $p= -\gamma_{lg} \frac{\pt }{\pt x}\left(  \frac{\pt_xh}{\sqrt{1+ (\pt_xh)^2}}\right)$ due to the force balance on the capillary surface. With $w\equiv 0$, \cite[Theorem 8.3]{finn2012equilibrium} proved the nonexistence of the static profile for $\theta_0\neq 0, \pi$. With a sufficient substrate roughness, \cite{Caffarelli_Mellet_2007} proved the existence of the static profile.
\end{rem}
\begin{rem}\label{rem_V}
In the case without volume constraint, we can calculate the rate of change of the volume
\begin{align*}
\pt_t V=& \int_{a(t)}^{b(t)}  \pt_t h \ud x = \frac{1}{\beta}  \arctan \pt_x h\Big|_{a(t)}^{b(t)} - \kappa \int_{a(t)}^{b(t)} \sqrt{1+(\pt_x h)^2} (h\cos\theta_0+x \sin\theta_0) \ud x\\
=& -\frac{\theta_{0b}+ \theta_b+ \theta_{0a}+ \theta_a}{\beta}  - \kappa \int_{a(t)}^{b(t)} \sqrt{1+(\pt_x h)^2} (h\cos\theta_0+x \sin\theta_0) \ud x\leq 0.
\end{align*} 
The shrink estimate of the 2D droplet suggests  the volume preserving constraint is necessary to observe interesting phenomena for long enough time. For this reason, all the numerical examples are with volume constraint.
\end{rem}

\section{numerical schemes for droplets dynamics on a rough and inclined surface}\label{sec3}
In this section, we consider a droplet (described by a vertical graph function) on a rough and inclined surface in the partially wetting case, i.e., the relative adhesion coefficient  $-1< \sigma\leq 0$.  Since a
uniform estimate for the moving boundaries $a(t), b(t)$ in \eqref{wet-eq-r} can be obtained, we have an unconditionally stable explicit scheme for the time stepping of the moving
boundary, which in turn leads to the convergence of the numerical scheme.  This explicit discretization  decouples the computations for the moving boundary $a(t), b(t)$ and capillary surface $u(x,t)$.  In Section \ref{convergence}, to first illustrate the idea, we give  the stability and convergence analysis for the first/second order schemes for the quasi-static dynamics; see Proposition \ref{prop_st} and Theorems \ref{thm_con}, \ref{thm_con_2}.    To design the numerical schemes for the full dynamics of droplets and achieve a  second order scheme in time and space, we should particularly take care of the following issues. First, due to the moving grids at each time step, we need to map the capillary surface at different time to the same domain based on a semi-Lagrangian method upto second order accuracy. Second, to achieve a second order scheme efficiently, we design a predictor-corrector scheme with a nonlinear elliptic solver, which maintains the overall second order accuracy. Third, the effects from the inclined rough substrate and the volume constraint will be included. 
We will derive the first order scheme and give its truncation error in Section \ref{sec-sim}. Then we derive the second order scheme and give its truncation error  in Section \ref{sec3.2}. The proofs for truncation error estimates will be left to Appendix \ref{appA}.
Before we present the schemes, we list some key notations below in Table \ref{table00}.

\begin{table}[ht]
\caption{Commonly used notations in this paper.}\label{table00}
\begin{tabular}{|l|l|} 
\hline {\bf Symbols} & {\bf Meaning}\\ 
\hline\hline 
$t^n=n \Delta t$ &  Time steps\\
$a(t^n),\, b(t^n)$ & Exact moving boundaries at $t^n$\\
$a^n,\, b^n$ & Numerical moving boundary at $t^n$\\
$\tilde{a}^{n+1},\, \tilde{b}^{n+1}$ &Predictor numerical moving boundary at $t^{n+1}$\\
\hline \hline
$x^n\in[a(t^n), b(t^n)]$ & Spatial variable at $t^n$\\
$x^n\in[a^n, b^n]$ & Numerical spatial variable at $t^n$\\
$x^n_j=a^n+ j \tau^n, \, \tau^n= \frac{b^n-a^n}{N},\, j=0, \cdots, N$ & Moving spatial grids at $t^n$\\
$\tilde{ x}^{n+1}\in[\tilde{a}^{n+1}, \tilde{ b}^{n+1}]$ & Predictor  variable at $t^{n+1}$\\
$\tilde{x}^{n+1}_j= \tilde{a}^{n+1}+ j \tilde{\tau}^{n+1},$ & Predictor moving  grids at $t^{n+1}$\\  \qquad $\tilde{ \tau}^{n+1}= \frac{\tilde{b}^{n+1}- \tilde{a}^{n+1}}{N}, \, j=0, \cdots, N$ &\, \\
\hline\hline 
$h(x^n, t^n)$ for $x^n\in[a(t^{n}), b(t^{n})]$ & PDE solution at $t^n$\\
$h_j^n$, \, $j=0,\cdots, N$ &Numerical solution at time $t^n$ and spatial grid $x_j^n$\\
$h^n(x^n)$ for $x^n\in[a^n, b^n]$ & Numerical solution at $t^n$ (with continuous spatial variable)\\
$h^*(x^{n+1}, t^n)$ & Rescaled PDE solution at $t^n$\\
$h^{n*}(x^{n+1})$ &Numerical rescaled solution at $t^n$\\
\hline\hline 
$\tilde{h}(\tilde{x}^{n+1}, t^{n+1})$ & Predictor PDE solution at $t^{n+1}$\\
$\tilde{h}^{n+1}(\tilde{x}^{n+1})$ & Predictor numerical solution at $t^{n+1}$\\
$\tilde{h}_j^{n+1}$ & Predictor numerical solution at  $t^{n+1}$ and  grid $\tilde{x}_j^{n+1}$\\
$\tilde{h}^{n*}(x^{n+1})$ & Intermediate numerical rescaled solution from predictor\\
\hline 
\end{tabular}
\end{table}

\subsection{Stability analysis and convergence of numerical schemes  for quasi-static dynamics}\label{convergence}
In this section, to illustrate the idea, we will first  introduce   the first order/second order scheme for $w\equiv 0, \theta_0=0$ and the quasi-static case, i.e., $\beta=0$ in \eqref{wet-phy0}
\begin{equation}\label{quasi-eq}
\begin{aligned}
 \frac{\pt }{\pt x}\left( \frac{\pt_x u}{\sqrt{1+ (\pt_x u)^2}} \right)-\kappa u+\lambda(t)=0, \quad x\in(a(t),b(t)),\\
 u(a(t), t)=u(b(t), t)=0,\\
 \int_{a(t)}^{b(t)} u(x,t) \ud x = V,\\
a'(t)= \sigma+\frac{1}{\sqrt{1+(\pt_x u)^2}},\quad x=a(t),\\
b'(t)= -\sigma-\frac{1}{\sqrt{1+(\pt_x u)^2}},\quad x=b(t).\\
\end{aligned}
\end{equation}
Then we give the stability analysis and convergence result in Proposition \ref{prop_st} and Theorems \ref{thm_con}, \ref{thm_con_2} respectively. 
 Based on the observation for the unconditional stability and convergence, in Section \ref{sec-sim} and Section \ref{sec3.2}, we will design the first/second order numerical schemes for the full dynamic problem, i.e., $\beta>0$.

We first present the first/second order schemes and then prove the stability and convergence. Let $t^n = n\Delta t$, $n=0, 1, \cdots$ with time step $\Delta t$. Approximate $a(t^n), b(t^n), u(t^n) $ by $a^n, b^n, u^n$ respectively.
\\ \smallskip 
 \textit{First order scheme:}\\
Step 1 . Explicit boundary updates. Compute the one-side approximated derivative of $u^n$ at $b^n$ and $a^n$, denoted as $(\pt_x u^n)_N$ and $(\pt_x u^n)_0$. Then by  the  dynamic boundary condition in \eqref{wet-eq-r}, we update $a^{n+1}, b^{n+1}$ using 
\begin{equation}
\begin{aligned}\label{end_a00}
\frac{a^{n+1}-a^n}{\Delta t}&= \sigma +\frac{1 }{\sqrt{1+ (\pt_x u^n)_0^2}} , \qquad 
\frac{b^{n+1}-b^n}{\Delta t}&= -\sigma  -\frac{1}{\sqrt{1+ (\pt_x u^n)_N^2}}.
\end{aligned}
\end{equation}
%
%
Step 2. Update $u^{n+1}$ and $\lambda^{n+1}$ implicitly.
\begin{equation}\label{tm_stable}
\begin{aligned}
 \frac{\pt }{\pt x}\left( \frac{\pt_x u^{n+1}}{\sqrt{1+ (\pt_x u^{n+1})^2}} \right)-\kappa u^{n+1}+\lambda^{n+1}=0, &\\
u^{n+1}(a^{n+1})=0, \quad u^{n+1}(b^{n+1})=0&,\\
\int_{a^{n+1}}^{b^{n+1}} u^{n+1} \ud x = \int_{a^0}^{b^0} u^0 \ud x,&
\end{aligned}
\end{equation}
where the independent variable for $u^{n+1}$  is $x^{n+1}\in(a^{n+1},b^{n+1})$.
\\
\smallskip
\textit{Second order scheme:}\\
Step 1 . Repeat the Step 1 and Step 2 for the first order scheme. Denote the results as the predictor $\tilde{a}^{n+1}, \tilde{b}^{n+1}, \tilde{u}^{n+1}$.
\\
Step 2. Explicit boundary updates based on a predictor-corrector method. Compute the one-side approximated derivative of $u^n$ at $b^n$ and $a^n$, denoted as $(\pt_x u^n)_N$ and $(\pt_x u^n)_0$. Then  update $a^{n+1}, b^{n+1}$ using 
\begin{equation}
\begin{aligned}\label{end_a000}
\frac{a^{n+1}-a^n}{\Delta t}&= \sigma +\frac{1}{2}\left(\frac{1 }{\sqrt{1+ (\pt_x u^n)_0^2}} + \frac{1 }{\sqrt{1+ (\pt_x \tilde{u}^{n+1})_0^2}} \right) , \quad\\
\frac{b^{n+1}-b^n}{\Delta t}&= -\sigma  - \frac12\left( \frac{1}{\sqrt{1+ (\pt_x u^n)_N^2}}+ \frac{1}{\sqrt{1+ (\pt_x \tilde{u}^{n+1})_N^2}} \right).
\end{aligned}
\end{equation}
\\
Step 3. Update $u^{n+1}$ and $\lambda^{n+1}$ implicitly according to \eqref{tm_stable}.

\begin{prop}[Stability for first/second order scheme]\label{prop_st}
Suppose $\kappa>0$. Given initial data $a^0, b^0, u^0$,  assume $T< \frac{b^0-a^0}{2(1+\sigma)}$. Then for {\blue $n <\frac{T}{\Delta t}$}, we have
\begin{enumerate}[(i)]
\item the estimate for endpoints
 \begin{equation}\label{stable_ab}
\begin{aligned}
a^0 + \sigma T \leq {\blue a^{n+1}} \leq a^0 + (1+\sigma)T, \qquad b^0 - (1+\sigma) T \leq {\blue b^{n+1} } \leq b^0 - \sigma T;
\end{aligned}
\end{equation}
\item the estimate for $\lambda$
\begin{equation}\label{lambda_es}
0\leq \lambda^{n+1}\leq C;
\end{equation}
\item the estimate for $u$ and $u_x$
\begin{equation}\label{u_es}
\int_{a^{n+1}}^{b^{n+1}} \bbs{\sqrt{1+(\pt_x u^{n+1})^2} + \kappa (u^{n+1})^2 } \ud x  \leq C,
\end{equation}
where $C$ is a constant depending only on the initial data $a^0, b^0$ and $T$.
\end{enumerate}
\end{prop}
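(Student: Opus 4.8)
The plan is to prove the three estimates in sequence, since each builds on the previous ones, and to isolate the quasi-static elliptic problem \eqref{tm_stable} as the analytic core. For part (i), the endpoint estimate is essentially immediate: in the partially wetting case $-1<\sigma\le 0$, the quantity $1/\sqrt{1+(\pt_x u^n)_0^2}$ lies in $[0,1]$, so the explicit update \eqref{end_a00} gives $\sigma \le (a^{n+1}-a^n)/\Delta t \le 1+\sigma$ and symmetrically $-(1+\sigma)\le (b^{n+1}-b^n)/\Delta t \le -\sigma$. Summing over $n$ steps with $n\Delta t \le T$ yields \eqref{stable_ab}. The hypothesis $T< \frac{b^0-a^0}{2(1+\sigma)}$ guarantees that $a^{n+1}<b^{n+1}$ throughout, so the interval $(a^{n+1},b^{n+1})$ on which \eqref{tm_stable} is posed is nonempty and in fact has length bounded below by a positive constant depending only on $a^0,b^0,T$; I would record this lower bound explicitly since it is used in parts (ii) and (iii).

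For part (iii), I would view \eqref{tm_stable} as the Euler--Lagrange equation of the constrained minimization of the energy $E(u)=\int_{a^{n+1}}^{b^{n+1}} \bbs{\sqrt{1+(\pt_x u)^2} + \tfrac{\kappa}{2}u^2}\ud x$ over $u\in H_0^1(a^{n+1},b^{n+1})$ subject to $\int u\ud x = V$, with $\lambda^{n+1}$ the Lagrange multiplier. The key point is that the minimizer has energy bounded by the energy of \emph{any} competitor with the right volume — for instance a fixed tent/parabola profile on $(a^{n+1},b^{n+1})$ whose height is adjusted so that its integral equals $V$; using the uniform bounds on the interval length from part (i), this competitor has energy $\le C(a^0,b^0,T)$. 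Since $\sqrt{1+(\pt_x u^{n+1})^2}\le \sqrt{1+(\pt_x u^{n+1})^2}+\tfrac{\kappa}{2}(u^{n+1})^2 = $ integrand of $E$, and $\kappa(u^{n+1})^2 = 2\cdot\tfrac{\kappa}{2}(u^{n+1})^2$, the bound \eqref{u_es} follows directly (up to adjusting $C$ by the factor $2$ and the interval-length term absorbed into $C$). One should of course first note that a minimizer exists: the functional is coercive and convex on $H_0^1$ with the affine volume constraint, so this is standard; alternatively one may simply assume, as the scheme does, that \eqref{tm_stable} is solvable and that its solution is the minimizer.

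For part (ii), I would extract the bound on $\lambda^{n+1}$ from the same variational picture. Testing the first equation in \eqref{tm_stable} against $u^{n+1}$ and integrating by parts (using $u^{n+1}=0$ at the endpoints) gives
\begin{equation*}
\int_{a^{n+1}}^{b^{n+1}} \frac{(\pt_x u^{n+1})^2}{\sqrt{1+(\pt_x u^{n+1})^2}}\ud x + \kappa \int (u^{n+1})^2 \ud x = \lambda^{n+1} V,
\end{equation*}
so $\lambda^{n+1}\ge 0$ and, using \eqref{u_es} together with $V=\pi>0$, also $\lambda^{n+1}\le C$. For the lower bound $\lambda^{n+1}\ge 0$ one can argue more directly: integrating the equation itself over $(a^{n+1},b^{n+1})$ gives $\lambda^{n+1}(b^{n+1}-a^{n+1}) = \kappa\int u^{n+1}\ud x - \arctan(\pt_x u^{n+1})\big|_{a^{n+1}}^{b^{n+1}} = \kappa V + (\theta_a+\theta_b$-type angle terms$)$, and since $u^{n+1}\ge 0$ decays to $0$ at both ends its one-sided derivatives have the sign making the boundary term nonnegative, whence $\lambda^{n+1}\ge \kappa V/(b^{n+1}-a^{n+1})\ge 0$.

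The main obstacle is part (iii): one must produce a legitimate competitor on the \emph{moving} interval $(a^{n+1},b^{n+1})$ whose energy is bounded uniformly in $n$, which is exactly where the interval-length bounds from part (i) — both the upper bound (so the perimeter term $\int\sqrt{1+(\pt_x u)^2}$ of a tent of fixed aspect ratio stays controlled) and the \emph{lower} bound (so the tent height needed to enclose volume $V$ does not blow up) — are essential. A secondary subtlety is the regularity/existence of the solution to \eqref{tm_stable} and the rigorous justification of the integration by parts used in part (ii); I would handle this by invoking the standard theory for the prescribed-mean-curvature equation with Dirichlet data on an interval, or by simply noting that the proposition's hypotheses implicitly presuppose a solution exists, in which case convexity forces it to coincide with the energy minimizer and all the above estimates go through verbatim.
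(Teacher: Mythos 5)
Your proposal is correct, and part (i) coincides with the paper's argument, but for (ii)--(iii) you take a genuinely different route and in the reverse order. The paper first bounds $\lambda^{n+1}$ by integrating the first equation of \eqref{tm_stable} over $(a^{n+1},b^{n+1})$: the flux term $\frac{\pt_x u^{n+1}}{\sqrt{1+(\pt_x u^{n+1})^2}}\big|_{a^{n+1}}^{b^{n+1}}$ is bounded by $2$ in absolute value, so $\lambda^{n+1}\leq \frac{\kappa V+2}{b^{n+1}-a^{n+1}}$, which combined with the interval-length lower bound from (i) gives \eqref{lambda_es}; the energy bound \eqref{u_es} then follows from the identity obtained by testing with $u^{n+1}$, namely $\int \frac{(\pt_x u^{n+1})^2}{\sqrt{1+(\pt_x u^{n+1})^2}}+\kappa (u^{n+1})^2\,\ud x=\lambda^{n+1}V$. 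You instead get \eqref{u_es} first from the constrained-minimization structure of \eqref{tm_stable} with an explicit tent competitor, and then read off the upper bound on $\lambda^{n+1}$ from the same tested identity; your lower bound $\lambda^{n+1}\geq 0$ via that identity is exactly the paper's. Both routes work: the paper's is more elementary (no need to invoke existence of a minimizer or to identify the scheme's solution with it), while yours is more structural and would survive modifications of the curvature operator for which the flux is not uniformly bounded, at the price of the convexity argument identifying critical points with minimizers. Two small corrections to your write-up: the area functional is not coercive on $H_0^1$ (coercivity/existence lives in $BV$, so your fallback of presupposing solvability, as the paper implicitly does, is the right move); and in your "more direct" computation for $\lambda^{n+1}\geq 0$ the boundary term produced by integrating $\pt_x\bigl(\frac{\pt_x u}{\sqrt{1+(\pt_x u)^2}}\bigr)$ is $\frac{\pt_x u}{\sqrt{1+(\pt_x u)^2}}$ evaluated at the endpoints, not $\arctan(\pt_x u)$ (and that variant also needs $u^{n+1}\geq 0$ near the endpoints), though the sign argument is unaffected.
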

\begin{proof}
First, from \eqref{end_a00} and \eqref{end_a000}, we know for both first and second order schemes,
\begin{equation}
\sigma \Delta t \leq a^{n+1}-a^n \leq( \sigma+1) \Delta t, \quad  -(\sigma+1)\Delta t \leq  b^{n+1}-b^n\leq -\sigma \Delta t.
\end{equation}
This leads to statement (i).

Second, {\blue multiplying the first equation in \eqref{tm_stable} by $u^{n+1}$ and integration by parts show immediately that  $\lambda^{n+1}>0$ since  $\kappa>0$.}  Then integrating  the first equation in \eqref{tm_stable} from $a^{n+1}$ to $b^{n+1}$, we have
\begin{equation}
\frac{\pt_x u^{n+1}}{\sqrt{1+ (\pt_x u^{n+1})^2}} \bigg|_{a^{n+1}}^{b^{n+1}}-\kappa V+\lambda^{n+1}(b^{n+1}-a^{n+1})=0.
\end{equation}
Then by (i) we have
\begin{equation}
\lambda^{n+1}\leq \frac{\kappa V+2}{b^{n+1}-a^{n+1}}\leq  \frac{\kappa V+2}{b^0-a^0 -2(1+\sigma)T}
\end{equation}
 and thus \eqref{lambda_es}.

Third, multiplying the first equation in \eqref{tm_stable} by $u^{n+1}$ and integration by parts show that
\begin{equation}
\int_{a^{n+1}}^{b^{n+1}}\bbs{\sqrt{1+(\pt_x u^{n+1})^2}-1 + \kappa (u^{n+1})^2 } \ud x \leq \int_{a^{n+1}}^{b^{n+1}} \bbs{\frac{(\pt_x u^{n+1})^2}{\sqrt{1+(\pt_x u^{n+1})^2}} + \kappa (u^{n+1})^2 } \ud x   =\lambda^{n+1} V .
\end{equation}
This, together with the estimate for $\lambda$ in \eqref{lambda_es} and (i), gives the estimate for $\pt_x u$ and $u$ 
\begin{equation}
\begin{aligned}
\int_{a^{n+1}}^{b^{n+1}}\bbs{\sqrt{1+(\pt_x u^{n+1})^2} + \kappa (u^{n+1})^2 } \ud x  \leq& \lambda^{n+1} V + b^{n+1}-a^{n+1}\\
\leq&  \frac{\kappa V^2+2V}{b^0-a^0 -2(1+\sigma)T}+ b_0-a_0-2\sigma T
\end{aligned}
\end{equation}
and we conclude \eqref{u_es}.
\end{proof}

Before proving the convergence of the scheme, we first clarify  the existence and properties in Proposition \ref{pertur_pf0} for the quasi-static solution to
\begin{equation}\label{quasi22}
\begin{aligned}
 \frac{\pt }{\pt x}\left( \frac{\pt_x u}{\sqrt{1+ (\pt_x u)^2}} \right)-\kappa u+\lambda=0, \quad x\in(a,b),\\
 u(a)=u(b)=0,\\
 \int_{a}^{b} u(x,t) \ud x = V.
\end{aligned}
\end{equation}
 It is easy to see \eqref{quasi22}   is translation invariant for $x\to x+\epsilon$, so without loss of generality  we assume $-a= b >0$.  Due to the reflection invariance under $x\to -x$, the solution $u(x)$ is even.  Hence $\pt_x u(-x)=- \pt_x u(x)$. Let $\theta$ be the contact angle such that $\tan\theta = -\pt_x u|_{b}$ and thus
 \begin{equation}\label{cos}
\cos\theta=\frac{1}{\sqrt{1+(\pt_x u)^2}}\Big|_{x=b} .
 \end{equation}
{\blue From \cite[Theorem 1.1]{wente1980symmetry},   the solution to \eqref{quasi22} has  only one vertical axis of symmetry such that any nonempty intersection of $u$ with a horizontal  hyperplane are two points symmetric w.r.t. that  vertical axis.  Thus the maximum of $u(x)$ is attained uniquely at $x=0$ and there exists a unique horizontal graph representation using the inverse function $
X(u):=\{X; u(X)=u\}
.$}  Let $u(0)=u_m$ be the maximum of $u$. We have
\begin{equation}\label{quasi33}
\begin{aligned}
 \pt_u\left( \frac{X_u}{\sqrt{1+X_u^2}} \right) - \kappa u + \lambda=0,\quad 0\leq u \leq u_m,\\
X(u_m)=0,\quad X_u(u_m)=-\8,\\
\int_0^{u_m} X(u) \ud u=V/2, 
\end{aligned}
\end{equation}
whose derivation via gradient flow is given in Appendix \ref{app_nonwet} for completeness. 
 Equation \eqref{quasi33} can be used to describe not only the quasi-static profiles with single vertical graph representation but also  profiles with horizontal graph representation; see more details in Kelvin pendant drop problem  \eqref{dewet-eq-n}.
 For instance, for the simple case $\kappa=0$, the quasi-static profile is given by the spherical cap formula \eqref{cap} with 2D volume formula \eqref{vol2d}. If $2V<\pi b^2$ then the spherical cap has a single vertical graph representation.
 
 {\blue 
\begin{lem}\label{finn}(\cite[Theorem 3.2]{finn2012equilibrium})
Given a volume $V$ and a contact angle $0< \theta < \pi$, there exists unique $u_m, \lambda$ and $X(u), u\in[0,u_m]$ satisfying \eqref{quasi33}. Denote the contact point as $b=X(0)$.
\end{lem}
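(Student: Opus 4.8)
The plan is to reduce the free-boundary problem \eqref{quasi33} to an explicit quadrature parametrized by the single scalar $u_m$, and then to close it by a monotonicity/intermediate-value argument. First I would pass to the inclination angle $\psi(u)$ defined by $X_u=\tan\psi$, i.e. $\sin\psi=\frac{X_u}{\sqrt{1+X_u^2}}$ and $\cos\psi=\frac{1}{\sqrt{1+X_u^2}}>0$. The first line of \eqref{quasi33} then reads $\frac{\ud}{\ud u}\sin\psi(u)=\kappa u-\lambda$; the terminal data $X_u(u_m)=-\infty$ becomes $\psi(u_m)=-\tfrac{\pi}{2}$; and the prescribed contact angle, via $\tan\theta=-\pt_x u|_{x=b}$, \eqref{cos}, and the inverse-function relation $X_u(0)=(\pt_x u|_{x=b})^{-1}$, becomes $\sin\psi(0)=-\cos\theta$ (here $\sin\theta>0$ since $0<\theta<\pi$). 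Integrating from $u=0$ gives $\sin\psi(u)=-\cos\theta+\tfrac{\kappa}{2}u^2-\lambda u$, and imposing $\sin\psi(u_m)=-1$ eliminates the multiplier, $\lambda=\lambda(u_m):=\tfrac{\kappa}{2}u_m+\tfrac{1-\cos\theta}{u_m}$. Finally $X_u=\tan\psi$ together with $X(u_m)=0$ yields $X(u)=\int_u^{u_m}\frac{-\sin\psi(s)}{\sqrt{1-\sin^2\psi(s)}}\ud s$, so the whole candidate solution $(\lambda,X)$ is an explicit function of $u_m$ alone.

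Second, I would delimit the admissible range of $u_m$ and set up the volume function. After substituting $\lambda(u_m)$, the function $S_{u_m}(u):=\sin\psi(u)$ is an explicit quadratic in $u$ with $S_{u_m}(0)=-\cos\theta\in(-1,1)$ and $S_{u_m}(u_m)=-1$; one checks there is a maximal $u_m^\star\in(0,\infty]$ (depending on $\kappa$ and $\theta$) such that for $0<u_m<u_m^\star$ one has $S_{u_m}(u)\in(-1,1)$ on $[0,u_m)$, so that $\cos\psi>0$ there and the quadrature defining $X$ is well posed. Moreover $S_{u_m}(u)=-1+O(u_m-u)$ near the top, hence $X(u)=O(\sqrt{u_m-u})\to0$ as $u\to u_m$, so $X$ extends continuously to $[0,u_m]$ and the integral $\mathcal V(u_m):=2\int_0^{u_m}X(u;u_m)\ud u$ converges; by dominated convergence it depends continuously on $u_m$ on $(0,u_m^\star)$.

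Third, I would finish by showing that $\mathcal V$ is a strictly increasing bijection from $(0,u_m^\star)$ onto the relevant interval of volumes, with $\mathcal V(u_m)\to0$ as $u_m\to0^+$ (the profile collapses) and $\mathcal V(u_m)$ tending to the supremum of attainable volumes as $u_m\to u_m^\star$; then the prescribed $V$ has a unique preimage $u_m$, which in turn fixes $\lambda$ and $X$ uniquely through the formulas above. I expect the monotonicity of $\mathcal V$ (together with the identification of $u_m^\star$ and of the limiting volume) to be the main obstacle: $u_m$ appears at once in the integrand $S_{u_m}$, in the coefficient $\lambda(u_m)$, and in the upper limit, and the integrand is singular at $u=u_m$, so controlling the sign of $\frac{\ud}{\ud u_m}\mathcal V(u_m)$ calls for a careful phase-plane comparison of the profiles for nearby $u_m$. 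In the sessile regime $\kappa>0$ this comparison is clean, and one may alternatively get existence by minimizing the energy \eqref{energy} under the volume constraint, the minimizer being symmetric by \cite[Theorem 1.1]{wente1980symmetry}; in the pendant regime $\kappa<0$ one must restrict to the branch of profiles with $X>0$ on $[0,u_m)$ to rule out pinched or multi-bulge competitors, which is the setting of \cite[Theorem 3.2]{finn2012equilibrium}.
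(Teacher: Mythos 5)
The paper does not prove this lemma at all: it is quoted verbatim from \cite[Theorem 3.2]{finn2012equilibrium}, so there is no internal argument to compare against, and your attempt should be judged as a self-contained alternative. Its reductions are correct and in fact coincide with the paper's own later bookkeeping: your $\sin\psi=X_u/\sqrt{1+X_u^2}$ is $-J$ in the notation of \eqref{def_J}, your relation $\lambda=\tfrac{\kappa}{2}u_m+\tfrac{1-\cos\theta}{u_m}$ is exactly \eqref{jmn}, your quadrature for $X$ is \eqref{intg}, and your volume function $\mathcal V(u_m)$ is, after the rescaling $u=u_mv$ of \eqref{Jv}, the quantity $2\bigl(F_1(u_m,\theta)+V/2\bigr)$ of \eqref{rel_n_F} viewed at fixed $\theta$.

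The genuine issue is that the decisive step — strict monotonicity of $\mathcal V$ in $u_m$ and surjectivity onto $(0,\infty)$ — is only asserted ("in the sessile regime this comparison is clean"), and the naive pointwise comparison in fact fails for obtuse angles, since $J(0,\theta)=\cos\theta<0$ makes the integrand change sign when $\tfrac{\pi}{2}<\theta<\pi$. The gap is real but fillable with precisely the computation the paper performs later for a different purpose in \eqref{dev32}: differentiating under the integral after the rescaling gives
\begin{equation*}
\frac{\ud}{\ud u_m}\Bigl(\tfrac{\mathcal V}{2}\Bigr)=\frac{\mathcal V(u_m)}{u_m}+\kappa\, u_m^3\int_0^1\frac{v^2(1-v)}{\bigl(1-J(v,\theta)^2\bigr)^{3/2}}\,\ud v ,
\end{equation*}
so for $\kappa>0$ the map $\mathcal V$ is strictly increasing wherever it is positive; since $\mathcal V(u_m)\sim c(\theta)u_m^2>0$ as $u_m\to0^+$ (the spherical-cap limit, $c(\theta)>0$ by \eqref{vol2d}), it stays positive and increasing on the whole admissible interval $0<u_m<u_m^\star$. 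Moreover $u_m^\star$ is explicit: $1-J=(1-v)\bigl[(1-\cos\theta)-\tfrac{\kappa u_m^2}{2}v\bigr]$, so $u_m^\star=\sqrt{2(1-\cos\theta)/\kappa}$, and at $u_m=u_m^\star$ one has $1-J=(1-\cos\theta)(1-v)^2$, which makes $X(0)$ and hence $\mathcal V$ diverge; together with $\mathcal V\to0$ at $u_m\to0^+$ this yields a bijection onto $(0,\infty)$ and hence existence and uniqueness of $u_m$, $\lambda$, $X$ for every $V>0$, every $0<\theta<\pi$ and $\kappa>0$, which is the regime in which the lemma is invoked (Lemma \ref{lem_re} and Proposition \ref{pertur_pf0} assume $\kappa>0$). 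Concavity of $J$ on $[0,u_m]$ also rules out $J<-1$, so every solution of \eqref{quasi33} is of your quadrature form and uniqueness is complete. Your closing remarks about the pendant regime $\kappa<0$ are beside the point here, and the alternative route via energy minimization plus \cite[Theorem 1.1]{wente1980symmetry} would still leave uniqueness to be argued; I would finish the quadrature argument as above rather than switch.
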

Given a contact angle $\theta$, the existence and symmetry of a static droplet has been comprehensively studied in \cite{finn2012equilibrium, finn1980}. However,  our numerical schemes require  the existence of \eqref{quasi22} for a given contact boundary $b$ and crucially relies on the continuous dependence on $b$. 
  Propostion \ref{pertur_pf0} below 
 obtains the unique critical wetting domain $b^c$ corresponding to $\theta^c=\frac{\pi}{2}$ such that 
 the quasi-static solution to \eqref{quasi22} has a single vertical graph representation. Moreover, it gives the estimate for the continuous dependence of the contact angle $\theta$ with respect to   contact boundary $b$. This is also the key for the convergence analysis later.
Before stating Proposition \ref{pertur_pf0}, we first give the following lemma for  the relations between  $b,\lambda, \theta, u_m$.
\begin{lem}\label{lem_re}
Suppose $\kappa>0$ and  the volume of the droplet is $V$. Then for any contact angle $0< \theta \leq \frac{\pi}{2}$, the  droplet profile obtained in Lemma \ref{finn} satisfies the following relations among $\theta, u_m, b$
\begin{equation}\label{rel_n_F}
\begin{aligned}
&F_1(u_m, \theta)
:= u_m^2 \int_0^1 v \frac{J(v, \theta)}{\sqrt{1-J( v, \theta)^2}} \ud v - \frac{V}{2}=0,\\
&b=F_2(u_m, \theta):= u_m \int_0^{1} \frac{J(v, \theta)}{\sqrt{1-J(v,\theta)^2}} \ud v ,
\end{aligned}
\end{equation}
where $J(v, \theta)$ is defined in \eqref{Jv}.
\end{lem}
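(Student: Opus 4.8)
The plan is to reduce the second-order equation \eqref{quasi33} to an explicit first-order relation for the slope $X_u$, integrate it once in $u$, and then read off the contact point $b$ and the volume. First I would write $p(u):=\frac{X_u}{\sqrt{1+X_u^2}}$, so that the first line of \eqref{quasi33} becomes $\pt_u p=\kappa u-\lambda$; integrating from $u$ to $u_m$ and using $X_u(u_m)=-\8$, hence $p(u_m)=-1$, gives
\[
p(u)=-1-\tfrac{\kappa}{2}(u_m^2-u^2)+\lambda(u_m-u).
\]
The contact angle enters through $X_u|_{u=0}=1/\pt_x u|_{x=b}=-\cot\theta$ (using $\tan\theta=-\pt_x u|_b$ together with $X_u=1/\pt_x u$), so $p(0)=-\cos\theta$; evaluating the displayed identity at $u=0$ then pins down the Lagrange multiplier $\lambda u_m=1-\cos\theta+\tfrac{\kappa}{2}u_m^2$. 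Substituting this value and rescaling $u=u_m v$, $v\in[0,1]$, one recognizes $-p(u_m v)=J(v,\theta)$, the function of \eqref{Jv}; note $J(0,\theta)=\cos\theta\in[0,1)$ for $0<\theta\le\frac{\pi}{2}$ and $J(1,\theta)=1$.

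Next I would recover $b$ and the volume by integrating in $u$. Since the profile furnished by Lemma \ref{finn} is a decreasing horizontal graph with $X_u<0$ on $(0,u_m)$, we have $|p|<1$ there, and inverting $p=X_u/\sqrt{1+X_u^2}$ gives $-X_u=\frac{J(u/u_m,\theta)}{\sqrt{1-J(u/u_m,\theta)^2}}$. Integrating over $[0,u_m]$ and using $X(u_m)=0$ yields
\[
b=X(0)=-\int_0^{u_m}X_u\,\ud u=u_m\int_0^1\frac{J(v,\theta)}{\sqrt{1-J(v,\theta)^2}}\,\ud v=F_2(u_m,\theta).
\]
For the first identity, integrate the volume constraint by parts; using $X(u_m)=0$ and $uX(u)\to0$ as $u\to0$,
\[
\frac{V}{2}=\int_0^{u_m}X(u)\,\ud u=-\int_0^{u_m}u\,X_u\,\ud u=u_m^2\int_0^1 v\,\frac{J(v,\theta)}{\sqrt{1-J(v,\theta)^2}}\,\ud v,
\]
which is precisely $F_1(u_m,\theta)=0$.

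The one delicate point is the endpoint $v=1$, where $J=1$ and the integrands are singular. From the explicit formula one computes $1-J(v,\theta)=(1-v)\big[(1-\cos\theta)-\tfrac{\kappa}{2}u_m^2 v\big]$, so the integrands behave like $O\big((1-v)^{-1/2}\big)$ and are integrable \emph{provided} $(1-\cos\theta)-\tfrac{\kappa}{2}u_m^2>0$; this strict inequality is exactly what encodes the vertical tangent $X_u(u_m)=-\8$ with a finite contact point $b$, and is part of the content of Lemma \ref{finn}. With it in hand the boundary term $uX_u$ in the integration by parts also vanishes at both ends, and one should record that $0\le J(v,\theta)<1$ throughout $[0,1)$, which follows from the same sign information and the monotonicity of the profile. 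I expect this endpoint analysis to be the main (albeit mild) obstacle; the change of variable $u=u_m v$ and the remaining manipulations are routine computation.
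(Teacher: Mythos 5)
Your proposal is correct and follows essentially the same route as the paper's proof: a first integral of the equation in \eqref{quasi33} for $p=\tfrac{X_u}{\sqrt{1+X_u^2}}$, the two boundary values $p(u_m)=-1$ and $p(0)=-\cos\theta$ used to identify $J$ and eliminate $\lambda$ (the paper's \eqref{ju}--\eqref{jmn}), the rescaling $u=u_m v$ giving \eqref{Jv}, and then $b=-\int_0^{u_m}X_u\,\ud u$ together with $\tfrac{V}{2}=-\int_0^{u_m}uX_u\,\ud u$ via integration by parts, exactly as in \eqref{intg}--\eqref{int_VV}. The additional endpoint discussion at $v=1$ (integrability from $(1-\cos\theta)-\tfrac{\kappa}{2}u_m^2>0$, which indeed reflects the finiteness of $b$ in Lemma \ref{finn}) is a harmless refinement of what the paper leaves implicit.
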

}
\begin{proof}
Step 1.   Integrating once in the first equation of \eqref{quasi33}, we have
\begin{equation}\label{ju}
\frac{X_u}{\sqrt{1+X_u^2}}= \frac{\kappa u^2}{2} - \lambda u -\cos \theta,
\end{equation}
where we used
$\frac{X_u}{\sqrt{1+X_u^2}}\big|_{u=0}=-\cos \theta.$
Denote 
\begin{equation}\label{def_J}
J(u, \theta):=-\frac{\kappa u^2}{2} + \lambda u +\cos \theta.
\end{equation}
From the boundary condition $X_u(u_m)=-\8$ we know
\begin{equation}\label{jmn}
J(u_m, \theta) = -\frac{\kappa u_m^2}{2} + \lambda u_m +\cos \theta=1.
\end{equation}

Step 2. From \eqref{ju}, we know
\begin{equation}\label{ind}
J(u,\theta) = - \frac{X_u}{\sqrt{1+X_u^2}} \leq 1 = J(u_m, \theta).
\end{equation}
since  $\kappa>0$,  the graph of $J(u, \theta)$ for fixed $\theta$ is a  parabola open downwards.  
Hence its axis of symmetry is  located to the right of $u_m$ and thus $J(u, \theta)$ is increasing w.r.t $u$ for $0\leq u \leq u_m$, which implies 
\begin{equation}\label{ym000}
\kappa u-\lambda\leq 0 \quad \text{ for all } 0\leq u \leq u_m.
\end{equation}
In particular,  $0< \kappa u_m \leq \lambda$ and
\begin{equation}
1=J(u_m, \theta)\geq J(u, \theta)> J(0, \theta)= \cos \theta, \quad 0<  u\leq  u_m.
\end{equation}
Therefore, we know $0\leq \theta \leq  \frac{\pi}{2}$ if and only if $J(u,\theta)=- \frac{X_u}{\sqrt{1+X_u^2}}> 0$ for all $0< u \leq  u_m$.

Now we derive the relations between $\lambda$ and $u_m$.
Since $J\geq 0$
\begin{equation}\label{X_u15}
\frac{\ud X}{\ud u} = \frac{-J(u, \theta)}{\sqrt{1-J(u,\theta)^2}}, \quad 0\leq u \leq u_m.
\end{equation}
Since $X(u_m)=0$, we have the integral formula
\begin{equation}\label{intg}
X(u)= \int_u^{u_m} \frac{J(y,\theta)}{\sqrt{1-J(y,\theta)^2}} \ud y.
\end{equation}
Then from  the volume constraint,
\begin{equation}\label{int_VV}
\frac{V}{2} = \int_0^{u_m}  X(u) \ud u = -\int_0^{u_m} u X_u \ud u= \int_0^{u_m} u \frac{J(u, \theta)}{\sqrt{1-J(u,\theta)^2}} \ud u.
\end{equation}
Combining \eqref{ind}, \eqref{intg} and \eqref{int_VV}, we conclude 
\begin{equation}\label{rel_n}
\begin{aligned}
-\frac{\kappa u_m^2}{2} + \lambda u_m +\cos \theta=1,\\
\frac{V}{2} = \int_0^{u_m} u \frac{J(u, \theta)}{\sqrt{1-J(u,\theta)^2}} \ud u,\\
b=\int_0^{u_m} \frac{J(u, \theta)}{\sqrt{1-J(u,\theta)^2}} \ud u.
\end{aligned}
\end{equation} 

{\blue Step 3. To further eliminate $\lambda$, we introduce the variable $v\in[0,1]$ such that $u= u_m v$.  Then by changing variables $u=u_m v$ in 
\eqref{def_J}, we have
\begin{equation}\label{Jv}
J(v,\theta):=J(u_m v,\theta)=-\frac{\kappa u_m^2 v^2}{2} + \lambda u_m v + \cos\theta=\frac{\kappa u_m^2}{2}(v-v^2)+v+(1-v)\cos\theta,
\end{equation}
where we used the first equation in \eqref{rel_n} to eliminate $\lambda$. Thus the relations between $\theta, u_m, b$ in \eqref{rel_n} can be simplified as
\begin{equation}
\begin{aligned}
&F_1(u_m, \theta):= \int_0^{u_m} u \frac{J(u,\theta)}{\sqrt{1-J(u, \theta)^2}} \ud u- \frac{V}{2}
= u_m^2 \int_0^1 v \frac{J(v, \theta)}{\sqrt{1-J( v, \theta)^2}} \ud v - \frac{V}{2}=0,\\
&b=F_2(u_m, \theta):= \int_0^{u_m} \frac{J(u, \theta)}{\sqrt{1-J(u,\theta)^2}} \ud u =  u_m \int_0^{1} \frac{J(v, \theta)}{\sqrt{1-J(v,\theta)^2}} \ud v . 
\end{aligned}
\end{equation}
}
\end{proof}

{\blue 
\begin{prop}\label{pertur_pf0}
Suppose $\kappa>0$ and the volume of the droplet is $V$. For any $0<\theta\leq \frac{\pi}{2}$, assume $\theta, u_m, b$ satisfy \eqref{rel_n_F}. Then
\begin{enumerate}[(i)]
\item the function $u_m(\theta)$ obtained by Lemma \ref{finn} is strictly increasing w.r.t $\theta$ from $u_m(0^+)=0$ to $u_m(\frac{\pi}{2})=u_m^c$;
as a consequence, the inverse function $\theta=\theta(u_m)$ maps $u_m\in (0, u_m^c)$ onto $ \theta\in(0,\frac{\pi}{2})$ such that $F_1(u_m, \theta(u_m))=0$ and $\frac{\pt (\cos \theta(u_m))}{\pt u_m}<0$;
\item $b=F_2(u_m, \theta(u_m))$ is a function of $u_m$ mapping $u_m\in (0, u_m^c)$ onto $b\in(b^c, +\8)$ such that  $\frac{\pt b(u_m)}{\pt u_m}<0$; as a consequence, the inverse function $u_m=u_m(b)$  mapping $b\in(b^c,+\8)$ onto $u_m\in(0, u_m^c)$;
\item  the composition $\theta=\theta(u_m(b))$ is a function of $b$ mapping $b\in(b^c,+\8)$ to $\theta\in(0,\frac{\pi}{2})$;
and we have the estimate
\begin{equation}\label{theta_b}
0<\frac{\pt (\cos\theta(b))}{\pt b}  \leq  \frac{1}{V-b u_m} \left( \frac{6V}{ u_m}+\kappa u_m^3\right)=: C_m.
\end{equation}
\end{enumerate}
\end{prop}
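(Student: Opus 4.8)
Throughout I abbreviate $c=\cos\theta\in[0,1)$ for $\theta\in(0,\tfrac{\pi}{2}]$, regard $J(v)=\tfrac{\kappa u_m^2}{2}(v-v^2)+v+(1-v)c$ as a function of $(v,u_m,c)$, and set $p(v):=J(v)\,(1-J(v)^2)^{-1/2}$ and $M_k:=\int_0^1 v^k(1-v)(1-J^2)^{-3/2}\ud v$ for $k=0,1,2$. Recall from the proof of Lemma~\ref{lem_re} that $J$ is strictly increasing on $[0,1]$ with $J(1)=1$, so $p\ge0$ is strictly increasing, and $\tfrac{\kappa u_m^2}{2}\le1-c$; in fact $\delta:=1-c-\tfrac{\kappa u_m^2}{2}>0$ strictly on the curve, since $1-J=(1-v)\big[\delta+\tfrac{\kappa u_m^2}{2}(1-v)\big]$ and $\delta=0$ would force $b=X(0)=\8$, contradicting Lemma~\ref{finn}. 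This positivity is what makes all the integrals above converge at the endpoint $v=1$ and legitimizes differentiating under the integral sign. For (i), I would view $F_1$ as $\tilde F_1(u_m,c)=u_m^2\int_0^1 v\,p(v)\ud v-\tfrac{V}{2}$; using $\pt_cJ=1-v\ge0$, $\pt_{u_m}J=\kappa u_m v(1-v)\ge0$, $\pt_J\big(J(1-J^2)^{-1/2}\big)=(1-J^2)^{-3/2}>0$, and the volume relation $u_m^2\int_0^1 v\,p=\tfrac{V}{2}$ on the zero set, one computes $\pt_c\tilde F_1=u_m^2M_1>0$ and $\pt_{u_m}\tilde F_1=\tfrac{V}{u_m}+\kappa u_m^3M_2>0$. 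By Lemma~\ref{finn} the zero set of $\tilde F_1$ is, for each admissible $c$, a single point, so the implicit function theorem makes it the graph of a $C^1$ function $u_m(\theta)$ with $\tfrac{dc}{du_m}=-\pt_{u_m}\tilde F_1/\pt_c\tilde F_1<0$; hence $u_m(\theta)$ is strictly increasing and $\pt_{u_m}\cos\theta(u_m)<0$. Finally $\tfrac{\kappa u_m^2}{2}\le1-\cos\theta\to0$ as $\theta\to0^+$ forces $u_m\to0$, so $u_m$ maps $(0,\tfrac{\pi}{2}]$ bijectively onto $(0,u_m^c]$ with $u_m^c:=u_m(\tfrac{\pi}{2})$, which is (i).

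For (ii), write $b=\tilde F_2(u_m,c(u_m))$ with $\tilde F_2(u_m,c)=u_m\int_0^1 p(v)\ud v$, so $\tfrac{db}{du_m}=D/\pt_c\tilde F_1$ where $D=(\pt_{u_m}\tilde F_2)(\pt_c\tilde F_1)-(\pt_c\tilde F_2)(\pt_{u_m}\tilde F_1)$. Computing the four partials gives $D=bu_mM_1-VM_0+\kappa u_m^4(M_1^2-M_0M_2)$. Now $M_1^2\le M_0M_2$ by Cauchy--Schwarz for the measure $(1-v)(1-J^2)^{-3/2}\ud v$ (strictly, since $v$ is nonconstant), and $bu_mM_1\le VM_0$ because $M_1\le M_0$ (as $v\le1$) together with the elementary monotone‑density inequality $\int_0^1 v\,p\ud v\ge\tfrac12\int_0^1 p\ud v$ (recall $bu_m=u_m^2\int_0^1 p$ and $V=2u_m^2\int_0^1 v\,p$, so this is $bu_mM_1\le VM_0$). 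Hence $D<0$, i.e.\ $\pt_{u_m}b(u_m)<0$; with continuity this gives $b\to b^c:=F_2(u_m^c,\tfrac{\pi}{2})$ as $u_m\to u_m^c$, while $b\to+\8$ as $u_m\to0^+$ because $V=\int_{-b}^{b}u\,\ud x\le2bu_m$. So $b(u_m)$ is a decreasing bijection $(0,u_m^c)\to(b^c,+\8)$ with inverse $u_m(b)$, which is (ii).

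For (iii), $\theta(b)=\theta(u_m(b))$ is decreasing, being a composition of an increasing and a decreasing map, hence $\pt_b\cos\theta>0$; moreover the chain rule with the formulas above yields the clean identity $\pt_b\cos\theta=\dfrac{N}{-D}$, where $N:=\pt_{u_m}\tilde F_1=\tfrac{V}{u_m}+\kappa u_m^3M_2>0$ and $-D=VM_0-bu_mM_1+\kappa u_m^4(M_0M_2-M_1^2)>0$. To get $\pt_b\cos\theta\le C_m$ I would split $M_2=\tfrac{M_1^2}{M_0}+\tfrac{M_0M_2-M_1^2}{M_0}$ and use the three facts $M_0\ge\tfrac12$ (since $(1-J^2)^{-3/2}\ge1$), $M_1\le M_0$, and $C_m\ge\tfrac{6}{u_m}$ (since $V-bu_m\le V$). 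Then $\tfrac{V}{u_m}+\tfrac{\kappa u_m^3M_1^2}{M_0}\le\tfrac{6VM_0}{u_m}+\kappa u_m^3M_0=M_0\,C_m(V-bu_m)\le C_m(VM_0-bu_mM_1)$ (the last step is again $M_1\le M_0$), while $\tfrac{\kappa u_m^3(M_0M_2-M_1^2)}{M_0}\le2\kappa u_m^3(M_0M_2-M_1^2)\le C_m\,\kappa u_m^4(M_0M_2-M_1^2)$. Adding these two estimates gives $N\le C_m(-D)$, hence $\pt_b\cos\theta\le C_m$; the constant $6$ is exactly what absorbs the factor‑$2$ slack in each of the two estimates.

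I expect the quantitative bound in (iii) to be the genuine obstacle: it is the only place where the precise form of $C_m$ matters, and it rests on noticing the decomposition of $M_2$ and the double role played by $M_1\le M_0$ (it controls both $VM_0-bu_mM_1\ge M_0(V-bu_m)$ and $M_1^2/M_0\le M_0$). A secondary but recurring technical point throughout (i)--(iii) is making differentiation under the $v\to1$‑singular integrals rigorous, which I would reduce to the locally uniform lower bound $\delta>0$ and dominated convergence.
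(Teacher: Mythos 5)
Your proposal is correct, and its backbone is the same as the paper's: the implicit function theorem applied to the same pair $F_1,F_2$ of \eqref{rel_n_F}, the same partial derivatives (in your notation $\pt_c\tilde F_1=u_m^2M_1$, $\pt_{u_m}\tilde F_1=\tfrac{V}{u_m}+\kappa u_m^3M_2$, $\pt_c\tilde F_2=u_mM_0$, $\pt_{u_m}\tilde F_2=\tfrac{b}{u_m}+\kappa u_m^2M_1$), the same determinant $D=bu_mM_1-VM_0+\kappa u_m^4(M_1^2-M_0M_2)$ for $db/du_m$, and the same Cauchy--Schwarz step for the $\kappa$-term. You deviate in two local sub-arguments. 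First, for $bu_m\le V$ the paper uses concavity of the horizontal profile $X(u)$ (from \eqref{ym000}, $X_{uu}\le 0$), whereas you use the Chebyshev correlation inequality $\int_0^1 v\,p\ud v\ge\tfrac12\int_0^1 p\ud v$ for the increasing density $p$; both work, but the constant $C_m$ in \eqref{theta_b} only makes sense if $V-bu_m>0$ \emph{strictly} (your step ``$C_m\ge 6/u_m$ since $V-bu_m\le V$'' silently uses this), so you should note that your Chebyshev inequality is strict because $p$ is strictly increasing and nonconstant, or invoke the paper's strict-concavity argument. Second, for the estimate \eqref{theta_b} the paper discards the nonnegative term $\kappa u_m^4(M_0M_2-M_1^2)$, bounds $|\pt b/\pt u_m|\ge (V-bu_m)/u_m^2$, and then uses $M_2\le M_1$ and $M_1\ge\tfrac16$; you instead verify $N\le C_m(-D)$ directly via the split $M_2=M_1^2/M_0+(M_0M_2-M_1^2)/M_0$ together with $M_0\ge\tfrac12$, $M_1\le M_0$ and $C_mu_m\ge 6$ --- a slightly longer computation that lands on exactly the same constant. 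Your observation that $\delta=1-\cos\theta-\tfrac{\kappa u_m^2}{2}>0$ (equivalently $\lambda>\kappa u_m$, since otherwise $b=X(0)=\infty$) is a genuine rigor point that the paper leaves implicit: it is what guarantees convergence of the singular integrals $M_k$ at $v=1$ and justifies differentiation under the integral sign.
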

\begin{proof}
Step 1. We first show statement (i) via inverse function theorem. 

Recall $J(v, \theta)$ defined in \eqref{Jv}. From Lemma \ref{finn}, we know $u_m=u_m(\theta)$ is a function of $\theta$ satisfying $F_1( u_m(\theta), \theta)=0$. Thus we have
$$
\frac{\pt u_m}{\pt \cos\theta} \frac{\pt F_1}{\pt u_m} +\frac{\pt F_1}{\pt \cos \theta}=0.
$$
From the first equation in \eqref{rel_n_F}, taking partial derivative of $F_1$
with respect to  $\cos \theta$, we have
\begin{equation}\label{tmFF_1}
 \frac{\pt F_1}{\pt {\cos \theta}} = u_m^2 \int_0^1 v \frac{1-v}{(1-J(v, \theta)^2)^{\frac32}} \ud v\geq  u_m^2\int_0^1 (v -v^2) \ud v= \frac{u_m^2}{6}>0
\end{equation}
due to $J(v,\theta)\leq 1$ from \eqref{ind}.
Moreover, taking partial derivatives of $F_1$ w.r.t $u_m$, we have
\begin{equation}\label{dev32}
\begin{aligned}
\frac{\pt F_1}{\pt u_m}=& 2 u_m \int_0^1 v \frac{J(v,\theta)}{\sqrt{1-J(v,\theta)^2}} \ud v + u_m^2 \int_0^1 \frac{\kappa u_m  (v^2-v^3)}{(1-J(v,\theta)^2)^\frac{3}{2}} \ud v\\
=& \frac{V}{u_m} + \kappa u_m^3 \int_0^1 \frac{v^2-v^3}{(1-J(v,\theta)^2)^\frac{3}{2}} \ud v>0.
\end{aligned}
\end{equation}
Therefore $\frac{\pt u_m}{\pt \cos\theta} = - \frac{\frac{\pt F_1}{\pt \cos \theta}}{\frac{\pt F_1}{\pt u_m}}<0$ and $u_m(\theta)$ is a strictly increasing function w.r.t $\theta$.  From the inverse function theorem we know there exists a unique function $\theta=\theta(u_m)$ such that 
\begin{equation}\label{sig1}
\frac{\pt \cos \theta}{\pt u_m} =-\frac{1}{u_m^2  \int_0^1 \frac{v-v^2}{(1-J(v)^2)^\frac{3}{2}} \ud v}\bbs{ \frac{V}{u_m} + \kappa u_m^3 \int_0^1 \frac{v^2-v^3}{(1-J(v,\theta)^2)^\frac{3}{2}} \ud v }< 0.
\end{equation}
From the definition of $F_1(u_m, \theta)$, it is easy to verify $u_m=0$ when $\theta=0$. Denote the value of $u_m$ corresponding to $\theta=\frac{\pi}{2}$ as $u_m^c$.
We conclude statement (i).

Step 2. Combining statement (i) and the second relation in \eqref{rel_n_F}, it is easy to see $b=F_2(u_m, \theta(u_m))$ is a function of $u_m$.

Step 3. We now show $u_m=u_m(b)$ is a function of $b$ via the inverse function theorem. Taking partial derivatives of $F_2$ w.r.t $u_m$, we have
\begin{equation}\label{dev33}
\begin{aligned}
\frac{\pt b}{\pt u_m} = \frac{b}{u_m} + \kappa u_m^2 \int_0^1 \frac{v-v^2}{(1-J(v)^2)^\frac{3}{2}} \ud v + u_m \frac{\pt \cos \theta}{\pt u_m}  \int_0^1 \frac{1-v}{(1-J(v)^2)^\frac{3}{2}} \ud v.
\end{aligned}
\end{equation}
From statement (i), we plug $\frac{\pt \cos \theta}{\pt u_m}$ into \eqref{dev33} to see
\begin{equation}\label{key1}
\begin{aligned}
\frac{\pt b}{\pt u_m}=&\frac{1}{-u_m^2  \int_0^1 \frac{v-v^2}{(1-J(v)^2)^\frac{3}{2}} \ud v} \Bigg[-b u_m  \int_0^1 \frac{v-v^2}{(1-J(v)^2)^\frac{3}{2}} \ud v + V \int_0^1 \frac{1-v}{(1-J(v)^2)^\frac{3}{2}} \ud v \\
& + \kappa u_m^4 \left( \int_0^1 \frac{1-v}{(1-J(v)^2)^\frac{3}{2}} \ud v \int_0^1 \frac{v^2-v^3}{(1-J(v)^2)^\frac{3}{2}} \ud v-  (\int_0^1 \frac{v-v^2}{(1-J(v)^2)^\frac{3}{2}} \ud v)^2 \right) \Bigg]\\
=:& \frac{1}{-u_m^2  \int_0^1 \frac{v-v^2}{(1-J(v)^2)^\frac{3}{2}} \ud v} [I_1 + I_2].
\end{aligned}
\end{equation}
From H\"older's inequality, we have
\begin{equation}\label{pos1}
 \left(\int_0^1 \frac{v\sqrt{1-v}\sqrt{1-v}}{(1-J(v)^2)^\frac{3}{2}} \ud v\right)^2 \leq   \int_0^1 \frac{1-v}{(1-J(v)^2)^\frac{3}{2}} \ud v \int_0^1 \frac{v^2-v^3}{(1-J(v)^2)^\frac{3}{2}} \ud v,
\end{equation} 
 which shows $I_2$ in \eqref{key1} is nonnegative. On the other hand, from \eqref{ym000}, we have
$$\frac{X_{uu}}{(1+X_u^2)^{\frac32}}=\pt_u\left( \frac{X_u}{\sqrt{1+X_u^2}} \right) = \kappa u - \lambda \leq 0, $$
so the quasi-static profile $X(u)$  is concave. Thanks to the concavity,  for any $\alpha\in(0,1)$,  
$X( (1-\alpha) u_m) > \alpha X(0) + (1-\alpha)X(u_m).$
Thus we know
 $ b u_m<V$. On the other hand, we have $\frac{V}{2}\leq b u_m$ from area formulas. Then from $0\leq v\leq 1$, we have
 \begin{equation}\label{pos2}
 \begin{aligned}
I_1=& -b u_m  \int_0^1 \frac{v-v^2}{(1-J(v,\theta)^2)^\frac{3}{2}} \ud v + V \int_0^1 \frac{1-v}{(1-J(v,\theta)^2)^\frac{3}{2}} \ud v\\
\geq& (V-b u_m) \int_0^1 \frac{v-v^2}{(1-J(v,\theta)^2)^\frac{3}{2}} \ud v \geq  \frac{V-b u_m}{6}>0.
\end{aligned}
 \end{equation}
 where we used $J(v,\theta)\leq 1$ similar to \eqref{tmFF_1}.
 Combining \eqref{pos1} and \eqref{pos2}, we obtain
 \begin{equation}\label{sig2}
 \frac{\pt b}{\pt u_m} \leq - \frac{I_1}{u_m^2  \int_0^1 \frac{v-v^2}{(1-J(v,\theta)^2)^\frac{3}{2}} \ud v}\leq -\frac{V-b u_m}{u_m^2}<0.
 \end{equation}
 Therefore $b(u_m)$ is a strictly decreasing function w.r.t. $u_m$. By the inverse function theorem, we conclude $u_m=u_m(b)$ is a function of $b$. Moreover, from the definition of $F_2$, it is easy to see when $u_m=0$, $\theta=0$, we have $b= +\8$. Denote the value of $b$ corresponding to $u_m^c$ as $b^c$. We conclude (ii).
 
 Step 4. From statement (i) and (ii), the composition $\theta(b)=\theta( u_m (b))$ is a function of $b$ and we conclude (iii).
 
Step 5.  Finally, we give the estimate in statement (iv). Combining \eqref{sig1} and \eqref{sig2}, we have
\begin{equation}\label{Cbound}
\begin{aligned}
0<  &  \frac{\pt (\cos \theta(b))}{ \pt b}=  \frac{\frac{\pt \cos\theta}{\pt u_m}}{\frac{\pt u_m}{\pt b}}  \leq  \frac{ \frac{V}{u_m}+\kappa u_m^3  \int_0^1 \frac{v^2-v^3}{(1-J(v,\theta)^2)^\frac{3}{2}} \ud v }{(V-b u_m)  \int_0^1 \frac{v-v^2}{(1-J(v,\theta)^2)^\frac{3}{2}} \ud v }\\
\leq & \frac{1}{V-b u_m} \left(\frac{ \frac{V}{u_m} }{  \int_0^1 \frac{v-v^2}{(1-J(v,\theta)^2)^\frac{3}{2}} \ud v } + \kappa u_m^3 \right)\leq  \frac{1}{V-b u_m} \left( \frac{6V}{ u_m}+\kappa u_m^3\right)=:C_m.
\end{aligned}
\end{equation}
\end{proof}

 }

{\blue Proposition \ref{pertur_pf0} gives the continuous dependence of the contact angle  $\theta=\theta(b)$ with respect to the contact point $b$ for  symmetric contact points ($a=-b$), so we conclude 
the existence of solutions to \eqref{quasi-eq} by the well-posedness of the ODE system
\begin{equation}
a' = \sigma + \cos \theta \bbs{\frac{b-a}{2}}, \qquad b' = -\sigma - \cos  \theta \bbs{\frac{b-a}{2}},
\end{equation}
where $\cos \theta(\cdot)$ is a function of $\frac{b-a}{2}.$
}
Now we state and prove the convergence result for the first order scheme \eqref{end_a00}-\eqref{tm_stable}. 

\begin{thm}\label{thm_con}
{\blue Assume $a(t), b(t)\in C^2[0,T]$ and the associated $ u(x,t)$ for $x\in[a(t), b(t)]$ are the exact solution to \eqref{quasi-eq}} and let  $a^n, b^n, u^n(x^n), \, x^n\in[a^n, b^n]$ at $t=t^n$ be the numerical solution obtained from the first order scheme \eqref{end_a00}-\eqref{tm_stable} with the same initial data $(a^0, b^0, u^0)$. Then for $n <\frac{T}{\Delta t}$, we have the convergence
\begin{equation}\label{convergence_ab}
|b(t^n)-b^{n}|\leq    e^{C_mT}   \Delta t, \quad |a(t^n)-a^{n}|\leq    e^{C_mT}  \Delta t,
\end{equation}
where $C_m$ is the bound in \eqref{theta_b}.
\end{thm}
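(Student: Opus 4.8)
The plan is to observe that, with the spatial variable kept continuous, the first order scheme \eqref{end_a00}--\eqref{tm_stable} is exactly the forward Euler discretization of a closed scalar ODE for the half-width of the wetting interval, so that \eqref{convergence_ab} reduces to a textbook consistency-plus-stability (discrete Gronwall) estimate in which the Lipschitz constant is the quantity $C_m$ of Proposition \ref{pertur_pf0}.

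\emph{Closing the dynamics.} For any interval $[a,b]$ with $b-a>2b^c$, the volume-$V$ profile solving \eqref{quasi22} is unique (Lemma \ref{finn}), hence even about the midpoint by the reflection invariance of \eqref{quasi22}; thus $\tfrac{1}{\sqrt{1+(\pt_x u)^2}}$ takes the same value $\cos\theta\bigl(\tfrac{b-a}{2}\bigr)$ at $x=a$ and at $x=b$, where $\cos\theta(\cdot)$ is the function of the half-width provided by Proposition \ref{pertur_pf0}(iii). In the continuous-spatial-variable setting the numerical profile $u^n$ is precisely this solution on $[a^n,b^n]$ (Step 2, i.e. \eqref{tm_stable}), so both the exact solution of \eqref{quasi-eq} and the numerical one conserve $a+b\ (=a^0+b^0)$, and, writing $r=\tfrac{b-a}{2}$, the exact dynamics is the scalar ODE $r'=-\sigma-\cos\theta(r)$ while \eqref{end_a00} gives $r^{n+1}=r^n-\Delta t\,(\sigma+\cos\theta(r^n))$ --- forward Euler for that ODE. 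Since $-1<\sigma\le 0$ and $0<\theta\le\tfrac\pi2$ we have $|r'|=|\sigma+\cos\theta|<1$, whence $r\in C^2[0,T]$ with $\|r''\|_{L^\infty}=\|(\cos\theta)'(r)\,r'\|_{L^\infty}\le C_m$ by \eqref{theta_b}.

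\emph{Error recursion.} Set $e_a^n=a(t^n)-a^n$, $e_b^n=b(t^n)-b^n$ and $E^n=|e_a^n|+|e_b^n|$, so $|r(t^n)-r^n|\le\tfrac12 E^n$. Subtracting the scheme from the integrated ODE, and using the forward-Euler consistency bound $\bigl|\int_{t^n}^{t^{n+1}}(a'(s)-a'(t^n))\,\ud s\bigr|\le\tfrac12\|a''\|_{L^\infty}\Delta t^2\le\tfrac{C_m}{2}\Delta t^2$ (and its $b$-analogue; these are the truncation estimates of Appendix \ref{appA}) together with $|\cos\theta(r(t^n))-\cos\theta(r^n)|\le C_m|r(t^n)-r^n|\le\tfrac{C_m}{2}E^n$ from Proposition \ref{pertur_pf0}(iv), one obtains
\begin{equation}
E^{n+1}\le(1+C_m\Delta t)\,E^n+C_m\Delta t^2 .
\end{equation}
With $E^0=0$, the discrete Gronwall inequality yields $E^n\le\bigl[(1+C_m\Delta t)^n-1\bigr]\Delta t\le(e^{C_m t^n}-1)\Delta t< e^{C_mT}\Delta t$ for $n<T/\Delta t$; since $|a(t^n)-a^n|,|b(t^n)-b^n|\le E^n$, this is \eqref{convergence_ab}.

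\emph{The one non-mechanical point} is keeping both trajectories in the admissible range $b-a>2b^c$ where Proposition \ref{pertur_pf0} applies. For the exact solution this is part of the hypothesis that $u(x,t)$ is a genuine vertical-graph solution on $[0,T]$. For the numerical solution it is established together with the error bound, by induction on $n$: Proposition \ref{prop_st}(i) already gives $b^n-a^n\ge b^0-a^0-2(1+\sigma)T>0$, and, setting $\delta:=\tfrac12\inf_{[0,T]}(b-a)-b^c>0$, the bound $E^n<e^{C_mT}\Delta t$ forces $b^n-a^n>2b^c$ once $\Delta t$ is small enough, which closes the induction (and guarantees $u^n$ in Step 2 exists and is symmetric, so that the reduction above is legitimate at every step). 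I expect this bootstrap to be the only slightly delicate bookkeeping; the analytic heart --- continuous dependence of the contact angle on the wetting boundary --- is entirely contained in Proposition \ref{pertur_pf0}, so the convergence argument itself is routine.
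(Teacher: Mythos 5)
Your proposal is correct and follows essentially the same route as the paper: reduce to a scalar equation for the symmetric contact point, read \eqref{end_a00} as forward Euler, combine the Taylor consistency bound with the Lipschitz estimate \eqref{theta_b} from Proposition \ref{pertur_pf0}, and close with a discrete Gronwall argument. The only differences are cosmetic — the paper assumes $-a=b$ outright instead of tracking conservation of $a+b$ — plus your explicit bootstrap keeping $b^n-a^n>2b^c$ (under a smallness condition on $\Delta t$), a point the paper leaves implicit when it invokes Proposition \ref{pertur_pf0} for the numerical profile.
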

\begin{proof}
Without loss of generality, {\blue we assume initially  $-a=b>0$ which is dynamically preserved for both exact solution and numerical scheme.}  So we have $-a(t)= b(t)$, $-a^n = b^n$ and we only prove the convergence for $b$. 

First,
from the Taylor expansion for the exact solution 
\begin{equation}
b(t^{n+1})= b(t^n)+b'(t^n)\Delta t + \frac12 b''(\xi) \Delta t^2
\end{equation}
and the boundary condition in \eqref{quasi-eq}, we have
\begin{equation}\label{tm3.55}
\frac{b(t^{n+1})-b(t^n)}{\Delta t} = b'(t^n) + \frac12 b''(\xi)\Delta t =  - \sigma - \cos \theta( b(t^{n})) + \frac12 b''(\xi)\Delta t.
\end{equation}
From  the the estimate $0\leq \frac{\pt (\cos\theta(b))}{\pt b} \leq C_m$ in \eqref{theta_b} and the boundary condition in \eqref{quasi-eq}, we have
\begin{equation}\label{b''}
|b''(\xi)|= \left|\frac{\pt (\cos\theta(b))}{\pt b}  b' \right| \leq \left|\frac{\pt (\cos\theta(b))}{\pt b} \right|\left|\sigma + \cos \theta \right| \leq 2C_m.
\end{equation}
Now, subtract \eqref{tm3.55} from the boundary update \eqref{end_a00} and denote $\eps^n:= |b(t^n)-b^n|$. From \eqref{theta_b} and \eqref{b''}, we have
{\blue 
\begin{equation}\label{recur}
\begin{aligned}
\frac{\eps^{n+1}-\eps^n}{\Delta t}\leq& |-\cos \theta(b(t^{n}))+ \cos \theta(b^n)|+C_m \Delta t\\
\leq& \left|\frac{\pt (\cos\theta(b))}{\pt b} \right| \eps^n+ C_m \Delta t \leq C_m ( \eps^n +  \Delta t).
\end{aligned}
\end{equation}
Here we used $\left| \frac{\cos \theta(b(t^n)) - \cos \theta(b^n)}{b(t^n)-b^n} \right| \leq \left| \frac{\pt (\cos \theta(b))}{\pt b} \right|\leq C_m$ since given $b^n$  the numerical profile $u^n$ (thus $\cos \theta(b^n)$) solved in the first order scheme \eqref{tm_stable} is also quasi-static profile and \eqref{theta_b} in Proposition \ref{pertur_pf0} holds.
}

Second,
\eqref{recur} gives the recurrence relation
\begin{equation}
\frac{\eps^{n+1}}{(1+C_m \Delta t)^{n+1}} \leq \frac{\eps^n}{(1+C_m \Delta t)^n} + \frac{C_m \Delta t^2}{(1+C_m \Delta t)^{n+1}}.
\end{equation} 
Thus 
\begin{equation}
\frac{\eps^{n}}{(1+C_m \Delta t)^{n}} \leq \eps^0 + C_m \Delta t^2 \sum_{k=1}^n \frac{1}{(1+C_m \Delta t)^k},
\end{equation}
which concludes
{\blue 
\begin{equation}
\eps^{n}\leq (1+C_m\Delta t)^{n} \eps^0+ (1+C_m\Delta t)^{n}  \Delta t \leq e^{C_mT} \left( \eps^0 + \Delta t\right).
\end{equation}
}
Thus $\eps^0=0$ gives the conclusion \eqref{convergence_ab}.
\end{proof}

Now we state the convergence result for the second order scheme \eqref{end_a000}-\eqref{tm_stable} and omit the proof.
\begin{thm}\label{thm_con_2}
{\blue Assume $a(t), b(t)\in C^3[0,T]$ and the associated $ u(x,t)$ for $x\in[a(t), b(t)]$ are the exact solution to \eqref{quasi-eq}}  and let $a^n, b^n, u^n(x^n), \, x^n\in[a^n, b^n]$ at $t=t^n$  be the numerical solution obtained from the second order scheme \eqref{end_a000}-\eqref{tm_stable} with the same initial data $(a^0, b^0, u^0)$. Then for $n <\frac{T}{\Delta t}$, we have the convergence
\begin{equation}
|b(t^n)-b^{n}|\leq Ce^{CT} \Delta t^2, \quad |a(t^n)-a^{n}|\leq Ce^{CT} \Delta t^2,
\end{equation}
where $C$ depends only on the bound $C_m$ in \eqref{theta_b}.
\end{thm}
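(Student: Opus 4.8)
The plan is to mirror the proof of Theorem \ref{thm_con}, replacing the first-order Taylor expansion with a second-order one and carefully controlling the error introduced by the predictor step. As before I would reduce to the symmetric case $-a=b>0$, which is dynamically preserved by both the exact flow and the predictor-corrector scheme, so that it suffices to estimate $\eps^n := |b(t^n)-b^n|$. Writing $g(b):=-\sigma-\cos\theta(b)$, the exact endpoint satisfies $b'=g(b)$, and Taylor expanding to second order gives
\begin{equation}
\frac{b(t^{n+1})-b(t^n)}{\Delta t} = g(b(t^n)) + \tfrac12 b''(t^n)\Delta t + \tfrac16 b'''(\xi)\Delta t^2.
\end{equation}
By Proposition \ref{pertur_pf0}, $\cos\theta(\cdot)\in C^1$ with derivative bounded by $C_m$ on the relevant range of $b$; one then needs to check (using the chain rule repeatedly on $b'=g(b)$, $b''=g'(b)b'$, $b'''=(g''(b)(b')^2+g'(b)^2 b')$) that $b\in C^3$ with $|b''|,|b'''|$ bounded by a constant $C$ depending only on $C_m$ — this requires a bound on the second derivative of $\cos\theta(b)$, which is the one new analytic ingredient beyond what is already proved; it can be obtained by differentiating the formulas \eqref{sig1}, \eqref{sig2} once more, or simply assumed as part of the hypothesis ``$a,b\in C^3[0,T]$'' applied to the exact solution.

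Next I would analyze the corrector update \eqref{end_a000}. The midpoint-type average $\tfrac12\big(\cos\theta(b^n)+\cos\theta(\tilde b^{n+1})\big)$ must be compared with $\tfrac12\big(\cos\theta(b(t^n))+\cos\theta(b(t^{n+1}))\big)$, which is itself a second-order-accurate (trapezoidal) approximation of $\cos\theta(b(t^n))+\tfrac12 b''(t^n)\Delta t$ up to $O(\Delta t^2)$ errors controlled by $C$. The predictor $\tilde b^{n+1}$ comes from one step of the first-order scheme started from $b^n$, so $|\tilde b^{n+1}-b(t^{n+1})| \le \eps^n + C\Delta t^2$ by exactly the estimate \eqref{recur} from Theorem \ref{thm_con}; crucially, $\tilde u^{n+1}$ solved via \eqref{tm_stable} at the predicted endpoints is again a genuine quasi-static profile, so Proposition \ref{pertur_pf0} applies to it and $\cos\theta(\tilde b^{n+1})$ is Lipschitz in $\tilde b^{n+1}$ with constant $C_m$. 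Assembling these pieces, subtracting the corrector update from the Taylor identity, and using $C_m$-Lipschitz continuity of $\cos\theta$ on all three profiles ($u^n$, $\tilde u^{n+1}$, and the exact ones) yields a recurrence of the form
\begin{equation}
\frac{\eps^{n+1}-\eps^n}{\Delta t} \le C\,\eps^n + C\,\Delta t^2,
\end{equation}
with $C$ depending only on $C_m$.

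Finally, the discrete Grönwall argument is identical to the one in Theorem \ref{thm_con}: dividing by $(1+C\Delta t)^{n+1}$, telescoping, and summing the geometric series gives $\eps^n \le (1+C\Delta t)^n(\eps^0 + C\Delta t^2) \le Ce^{CT}\Delta t^2$ since $\eps^0=0$. The main obstacle, I expect, is not the Grönwall step but the bookkeeping in the corrector estimate — specifically, verifying that the predictor error $|\tilde b^{n+1}-b(t^{n+1})|$ enters the final bound only multiplied by $\Delta t$ (so that its leading $\eps^n$ contribution is absorbed into the Grönwall constant and its $C\Delta t^2$ contribution stays second order), together with confirming that the trapezoidal average of $\cos\theta$ at the two endpoints reproduces $\tfrac12 b''\Delta t$ up to the right order. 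A secondary technical point is ensuring all endpoints $b^n,\tilde b^{n+1},b(t^n)$ remain in the region $b>b^c$ where Proposition \ref{pertur_pf0} is valid and $C_m$ is uniform; this follows from the stability estimate \eqref{stable_ab} in Proposition \ref{prop_st} together with the smallness assumption $T<\frac{b^0-a^0}{2(1+\sigma)}$, applied to both the scheme and (via continuous dependence) the exact solution.
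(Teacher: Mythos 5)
The paper states Theorem \ref{thm_con_2} without proof, remarking only that it parallels Theorem \ref{thm_con}, and your proposal supplies exactly that parallel argument — reduction to the symmetric case, a second-order Taylor/trapezoidal comparison for the Heun-type corrector \eqref{end_a000}, the key observation that the predictor profile solved through \eqref{tm_stable} is again a genuine quasi-static profile so that Proposition \ref{pertur_pf0} gives the $C_m$-Lipschitz bound for $\cos\theta(\tilde b^{n+1})$, and the same discrete Gr\"onwall step — so it is essentially the intended proof. Your own caveat is the right one to flag: for $C$ to depend only on $C_m$ one also needs control of $\pt_b^2\cos\theta(b)$ (equivalently of $b'''$), which \eqref{theta_b} does not provide and must either be derived by differentiating \eqref{sig1} and \eqref{sig2} once more or absorbed into the $C^3[0,T]$ hypothesis on the exact solution.
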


\subsection{First order unconditionally stable  scheme based on explicit boundary update and semi-implicit motion by mean curvature}\label{sec-sim}
Based on the observation for the unconditional stability and convergence for the quasi-static dynamics of the droplet (in Section \ref{convergence}), in this section, we design the first order scheme and give the truncation error estimate. 

\subsubsection{First order scheme based on explicit boundary update and semi-implicit motion by mean curvature}\label{sec-1st-scheme}
Now we design a numerical algorithm for the motion by mean curvature case  \eqref{wet-eq-r}. 
With some proper spatial discretizations (such as finite difference, finite element, spectral approximation), we approximate $h(x^n, t^n)$ by $h^n(x^n)$ for $x^n\in(a^n, b^n)$ and approximate $\lambda(t^n)$ by $\lambda^n$. We propose the following three-step algorithm for updating $a^n, b^n, h^n, \lambda^n$ from time step  $t^n$ to $t^{n+1}$.

Step 1. Compute the one-side approximated derivatives of $h^n$ at $b^n$ and $a^n$, denoted as $(\pt_x h^n)_N$ and $(\pt_x h^n)_0$. Then by  the  dynamic boundary condition in \eqref{wet-eq-r}, we update $a^{n+1}, b^{n+1}$ using 
\begin{equation}
\begin{aligned}\label{end_a}
\frac{a^{n+1}-a^n}{\Delta t}&= \sigma \sqrt{1+ (\pt_x w)_0^2} +\frac{1+ (\pt_x h^n)_0(\pt_x w)_0 }{\sqrt{1+ (\pt_x h^n)_0^2}} , \quad\\
\frac{b^{n+1}-b^n}{\Delta t}&= -\sigma \sqrt{1+ (\pt_x w)_N^2} -\frac{1+ (\pt_x h^n)_N(\pt_x w)_N }{\sqrt{1+ (\pt_x h^n)_N^2}}.
\end{aligned}
\end{equation}

Step 2. Rescale $h^n$ from $[a^n, b^n]$ to $[a^{n+1}, b^{n+1}]$ with $O(\Delta t ^2)$ accuracy using a semi-Lagrangian discretization. 
For $x^{n+1}\in[a^{n+1}, b^{n+1}]$, denote the map from moving grids at $t^{n+1}$ to $t^n$ as
\begin{equation}
x^n:= a^n + \frac{b^n-a^n}{b^{n+1}-a^{n+1}}(x^{n+1}-a^{n+1})\in[a^n, b^n].
\end{equation}
Define the rescaled solution for $h^n$ as
\begin{equation}\label{inter-u-0}
    h^{n*}(x^{n+1}):= h^n(x^n)+ \pt_x h^n (x^n)(x^{n+1}-x^n).
\end{equation}
 It is easy to verify by the Taylor expansion
$
    h^{n*}(x^{n+1}) = h^n(x^{n+1}) + O(|x^n-x^{n+1}|^2)
$ for the independent variable  $x^{n+1}\in(a^{n+1},b^{n+1})$.

Step 3. Update $u^{n+1}$ and $\lambda^{n+1}$ semi-implicitly.
\begin{equation}\label{tm313}
\begin{aligned}
\frac{\beta}{\sqrt{1+ (\pt_x h^{n*})^2}} \frac{h^{n+1}(x^{n+1})-h^{n*}(x^{n+1})}{\Delta t}=  {\blue \frac{\pt_{xx} h^{n+1}}{(1+ (\pt_x h^{n*})^2)^\frac32} }-\kappa (h^{n+1}\cos \theta_0 + x^{n+1}\sin \theta_0)+\lambda^{n+1}, &\\
h^{n+1}(a^{n+1})=w(a^{n+1}), \quad h^{n+1}(b^{n+1})=w(b^{n+1})&,\\
{\blue \int_{a^{n+1}}^{b^{n+1}} (h^{n+1}-w)(x^{n+1}) \ud x^{n+1} =V},&
\end{aligned}
\end{equation}
where the independent variable  is $x^{n+1}\in(a^{n+1},b^{n+1})$ and $V$ is the initial volume.
For convenience, we provide a pseudo-code for this scheme in Appendix \ref{code1-in}.

Similar to \eqref{stable_ab} in Proposition \ref{prop_st}, from  \eqref{end_a}, we know for $n \Delta t <T$, since $\sigma<0$
\begin{equation}
\begin{aligned}\label{stability}
a^0+\left( \sigma \sqrt{1+ \max_x |w_x|^2} - \max_x |w_x| \right) T \leq a^n \leq a_0+  (1+\sigma+ \max_x|w_x|)T, \\
  b^0-(1+\sigma+ \max_x|w_x|)T\leq b^n \leq b^0-  \left( \sigma \sqrt{1+ \max_x |w_x|^2} - \max_x |w_x| \right) T,
\end{aligned}
\end{equation}
 so the explicit scheme for the moving boundaries is unconditionally stable.

\subsubsection{Truncation analysis for the first order scheme}
Here, we state the truncation error for the first order scheme.
\begin{lem}\label{first-lem1}
{\blue Assume $a(t), b(t)\in C^2([t^n,t^{n+1}])$ and $h(x,t)\in  C_{x,t}^{4,2}([a(t), b(t)]\times  [t^n,t^{n+1}])$  be the exact solution to \eqref{wet-eq-r}}   with initial data at $t=t^n$, $a^n, b^n, h^n(x^n)$ for $x^n\in[a^n, b^n]$. Then we have the first order truncation error estimates
\begin{equation}
\begin{aligned}\label{abn}
\frac{a(t^{n+1})-a^n}{\Delta t}&= \sigma \sqrt{1+ (\pt_x w)_0^2} +\frac{1+ (\pt_x h^n)_0(\pt_x w)_0 }{\sqrt{1+ (\pt_x h^n)_0^2}} +O(\Delta t), \quad\\
\frac{b(t^{n+1})-b^n}{\Delta t}&= -\sigma \sqrt{1+ (\pt_x w)_N^2} -\frac{1+ (\pt_x h^n)_N(\pt_x w)_N }{\sqrt{1+ (\pt_x h^n)_N^2}}+ O(\Delta t),
\end{aligned}
\end{equation}
\begin{equation}\label{un}
\begin{aligned}
\frac{\beta}{\sqrt{1+ (\pt_x h^{n*})^2}} \frac{h(t^{n+1})-h^{n*}}{\Delta t}=&  {\blue \frac{\pt_{xx} h(t^{n+1})}{\bbs{1+ (\pt_x h^{n*})^2}^\frac32} } -\kappa (h(t^{n+1})\cos \theta_0 + x^{n+1}\sin \theta_0)+\lambda(t^{n+1}) + O(\Delta t),\\
&\qquad\qquad\qquad\qquad\qquad\qquad\qquad\qquad x^{n+1}\in[a(t^{n+1}), b(t^{n+1})],
\end{aligned}
\end{equation}
where $h^{n*}$ is given by
\begin{equation*}
\begin{aligned}
    &h^{n*}(x^{n+1}):=  h^n(x^n)+ \pt_x h^n (x^n)(x^{n+1}-x^n),\\
    & x^{n}=a^n + \frac{b^n-a^n}{b(t^{n+1})-a(t^{n+1})}(x^{n+1}-a(t^{n+1}))\in[a^n,b^n].
    \end{aligned}
\end{equation*}
\end{lem}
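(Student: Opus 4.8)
\textbf{Proof proposal for Lemma \ref{first-lem1}.}

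The plan is to establish the three truncation estimates \eqref{abn}, \eqref{un} by direct Taylor expansion in time, using the exact PDE \eqref{wet-eq-r} to replace time derivatives by the spatial operators appearing in the scheme, and then controlling the consistency errors introduced by the explicit boundary update and by the semi-Lagrangian rescaling of $h^n$ onto the new interval. The regularity hypotheses $a,b\in C^2$ and $h\in C^{4,2}_{x,t}$ are exactly what is needed so that all the remainders below are genuinely $O(\Delta t)$ with constants depending only on the stated norms.

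First I would treat the boundary estimates \eqref{abn}. Since $a,b\in C^2([t^n,t^{n+1}])$, Taylor expansion gives $a(t^{n+1}) = a(t^n) + a'(t^n)\Delta t + O(\Delta t^2)$, and the dynamic boundary condition in \eqref{wet-eq-r} identifies $a'(t^n)$ with $\sigma\sqrt{1+(\pt_x w)_0^2} + \frac{1+\pt_x h \,\pt_x w}{\sqrt{1+(\pt_x h)^2}}$ evaluated at $x=a(t^n)$. Since by assumption the scheme starts from the exact data at $t^n$ (so $a^n = a(t^n)$, $b^n = b(t^n)$, and $h^n = h(\cdot,t^n)$), the only discrepancy between $a'(t^n)$ and the right-hand side of the update \eqref{end_a} is that the latter uses the \emph{one-sided discrete} derivative $(\pt_x h^n)_0$ in place of the exact $\pt_x h(a(t^n),t^n)$. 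By the smoothness of $h$ in $x$ (here $C^4$ in $x$ is more than enough, even $C^2$ suffices) a consistent one-sided difference stencil gives $(\pt_x h^n)_0 = \pt_x h(a(t^n),t^n) + O(\tau^n)$ or $O((\tau^n)^2)$ depending on the stencil; since the spatial mesh is not the object of this lemma one simply absorbs this into the statement, the point being the $O(\Delta t)$ time-truncation; composing with the Lipschitz map $s\mapsto \frac{1+s(\pt_x w)_0}{\sqrt{1+s^2}}$ transfers this into an $O(\Delta t)$ (plus spatial) error in \eqref{abn}, and symmetrically for $b$.

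Next, for the surface estimate \eqref{un}, I would expand $h(x^{n+1},t^{n+1})$ about $t^n$: writing $h(x^{n+1},t^{n+1}) = h(x^{n+1},t^n) + \pt_t h(x^{n+1},t^n)\Delta t + O(\Delta t^2)$, and separately expanding the rescaled quantity $h^{n*}(x^{n+1}) = h^n(x^n) + \pt_x h^n(x^n)(x^{n+1}-x^n)$, which by the Taylor remainder is $h^n(x^{n+1}) + O(|x^n-x^{n+1}|^2) = h(x^{n+1},t^n) + O(\Delta t^2)$ since $x^{n+1}-x^n = O(\Delta t)$ (the interval endpoints move at bounded speed by \eqref{stability}, and $h^n$ is $C^2$ in $x$ by hypothesis). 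Hence the discrete time difference $\frac{h(x^{n+1},t^{n+1})-h^{n*}(x^{n+1})}{\Delta t} = \pt_t h(x^{n+1},t^n) + O(\Delta t)$. Now I invoke the PDE \eqref{wet-eq-r} at $(x^{n+1},t^n)$ to write $\frac{\beta \pt_t h}{\sqrt{1+(\pt_x h)^2}} = \frac{\pt_{xx}h}{(1+(\pt_x h)^2)^{3/2}} - \kappa(h\cos\theta_0 + x\sin\theta_0) + \lambda$, and then swap $t^n$ for $t^{n+1}$ in each term at the cost of $O(\Delta t)$ (using $\pt_t h, \pt_t \pt_{xx} h$ bounded, i.e. $C^{4,2}$, and $\lambda\in C^1$ which follows from the profile being quasi-static-like — or one simply includes $\lambda$ among the data whose time-Lipschitz constant enters the $O(\Delta t)$), and swap the exact $\pt_x h$ for the frozen coefficient $\pt_x h^{n*}$ in the denominators, again $O(\Delta t)$ by the same Taylor estimate $h^{n*} = h(\cdot,t^n) + O(\Delta t^2)$ differentiated in $x$ — here the $C^4$-in-$x$ regularity is genuinely used, since I need $\pt_x h^{n*} = \pt_x h(\cdot,t^n) + O(\Delta t)$ and the semi-Lagrangian formula \eqref{inter-u-0} only reproduces $h$ to second order, so its derivative is reproduced to first order. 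Collecting these yields \eqref{un}.

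The main obstacle I anticipate is the bookkeeping around the semi-Lagrangian rescaling: one must be careful that differentiating the approximation $h^{n*}$ in $x$ loses one order, so the coefficient $(1+(\pt_x h^{n*})^2)^{-3/2}$ is only $O(\Delta t)$-close to the exact coefficient, and one must check this is consistent with the claimed first-order truncation (it is, since the whole PDE residual is being measured at order $\Delta t$) — and also that the argument-shift $x^{n+1}\mapsto x^n$ hidden in the definition of $h^{n*}$ does not secretly introduce an $O(1)$ error; this is controlled precisely because $|x^{n+1}-x^n| \le C\Delta t$ uniformly, which is where \eqref{stability} (bounded boundary velocities) is needed. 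The remaining computations are routine Taylor expansions, so I would relegate the detailed remainder tracking to Appendix \ref{appA} as the paper indicates.
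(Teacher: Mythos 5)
Your treatment of the boundary estimates \eqref{abn} matches the paper's: Taylor expansion of $a,b$ (using $a,b\in C^2$) plus the dynamic boundary condition, with $a^n=a(t^n)$, $b^n=b(t^n)$, $h^n=h(\cdot,t^n)$ as exact initial data. Note only that in this lemma the spatial variable is continuous, so $(\pt_x h^n)_0,(\pt_x h^n)_N$ are the exact one-sided derivatives of $h^n$ at $a^n,b^n$; the digression about one-sided difference stencils and mesh errors is beside the point and should be dropped rather than "absorbed".

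For \eqref{un} there is a concrete soft spot in the step where you write $h^{n*}(x^{n+1})=h^n(x^{n+1})+O(|x^n-x^{n+1}|^2)=h(x^{n+1},t^n)+O(\Delta t^2)$ and then "invoke the PDE \eqref{wet-eq-r} at $(x^{n+1},t^n)$" before shifting time. The point $x^{n+1}$ ranges over $[a(t^{n+1}),b(t^{n+1})]$, and when a contact line advances (say $b(t^{n+1})>b^n$) such points lie outside $[a^n,b^n]=[a(t^n),b(t^n)]$: there $h(\cdot,t^n)$ is not defined --- which is precisely why the scheme uses the linear extrapolation \eqref{inter-u-0} --- and the PDE certainly does not hold at $(x^{n+1},t^n)$, so "swapping $t^n$ for $t^{n+1}$ in each term" starts from an expression without meaning. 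This is fixable (extend $h(\cdot,t^n)$ to a $C^4$ function on a slightly larger interval, and invoke the PDE only at $(x^{n+1},t^{n+1})$, where it is valid and where \eqref{un} is in fact stated), but the device the paper uses avoids the issue altogether: map to the fixed domain $Z=\frac{x-a(t)}{b(t)-a(t)}\in[0,1]$, write $\pt_t h(x^{n+1},t^{n+1})=\pt_t U(Z,t^{n+1})+\pt_Z U(Z,t^{n+1})\,\pt_t Z(x^{n+1},t^{n+1})$ with $U(Z,t)=h(x(Z,t),t)$, and discretize each factor by backward differences; then the old profile is only ever evaluated at $x^n=x(Z,t^n)\in[a^n,b^n]$, the semi-Lagrangian correction $\pt_x h^n(x^n)(x^{n+1}-x^n)$ drops out of the chain-rule term, and the frozen-coefficient estimate $\pt_x h^{n*}(x^{n+1})=\pt_x h^n(x^n)+O(\Delta t)$ (your "loses one order" remark, which is correct, with $|x^{n+1}-x^n|=O(\Delta t)$ coming simply from the boundedness of $a',b'$) is obtained exactly as you indicate. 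With that modification your outline coincides with the paper's proof.
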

For simplicity, $h(t^{n+1})$ represents $h(\cdot, t^{n+1})$ in the lemma above and the remaining contents. By mapping moving domain to  a fixed domain $Z=\frac{x-a(t)}{b(t)-a(t)}\in[0,1]$ for any $x\in[a(t), b(t)]$, the proof of this lemma is standard  so we put it in Appendix \ref{appA}.

\subsection{Second order numeric algorithm  based on a  predictor–corrector  method with an unconditionally stable explicit boundary update }\label{sec3.2} In this section, we use the  predictor–corrector  method to obtain a second order scheme.
With the notations in Table \ref{table00}, we still approximate $a(t^n), b(t^n) $ by $a^n, b^n$ respectively and  approximate $h(x^n, t^n)$ by $h^n(x^n)$ for $x^n\in(a^n, b^n)$.  However, it is more convenient to use a fixed domain variable 
\begin{equation}
Z(x,t)=\frac{x-a(t)}{b(t)-a(t)}\in[0,1], \quad \text{ for any }x\in[a(t), b(t)],
\end{equation}
 which is equivalent to
\begin{equation}\label{inverZ}
x(Z,t) = a(t) +  (b(t)-a(t)) Z \in [a(t), b(t)], \quad \text{ for } Z \in [0,1].
\end{equation}
 Denote $U(Z,t):=h(x,t)$. We will first present the second order numeric algorithm in Section \ref{sec_2nd_scheme} and then we  give the derivation of the second order scheme in Section \ref{sec-implicit} and Section \ref{sec_2nd_semiL}
based on the relation
\begin{equation}\label{Z-re}
Z= Z(x^{n+1}, t^{n+1})= Z(x^n, t^n)= Z(x^{n+\frac12}, t^{n+\frac12}).
\end{equation}
Here $t^{n+\frac12}:= (n+\frac12)\Delta t$ and $Z(x^{n+\frac12}, t^{n+\frac12}) =  \frac{x^{n+\frac12}-a(t^{n+\frac12})}{b(t^{n+\frac12})-a(t^{n+\frac12})}$ with the independent variable $x^{n+\frac12}.$

\subsubsection{Second order predictor-corrector scheme and unconditional stability for explicit boundary update}\label{sec_2nd_scheme}
Now we present the second order scheme with continuous spatial variables. 

Step 1. Predictor. Since we show in Section \ref{sec-implicit} that the  nonlinear elliptic solver  for motion by mean curvature requires second order accuracy, after updating $a^{n+1}, b^{n+1}$ by   the first order scheme in Section \ref{sec-sim}, we replace  the semi-implicit elliptic solver  by an implicit nonlinear  elliptic solver. Precisely, for the independent variable   $x^{n+1}\in(a^{n+1},b^{n+1})$,
\begin{equation}\label{newton}
\begin{aligned}
\frac{\beta}{\sqrt{1+ (\pt_x h^{n+1})^2}} \frac{h^{n+1}(x^{n+1})-h^{n*}(x^{n+1})}{\Delta t}=  {\blue \frac{\pt_{xx} h^{n+1}}{\bbs{1+ (\pt_x h^{n+1})^2}^\frac32} }-\kappa (h^{n+1}\cos \theta_0 + x^{n+1}\sin \theta_0)+\lambda^{n+1}, &\\
h^{n+1}(a^{n+1})=w(a^{n+1}), \quad h^{n+1}(b^{n+1})=w(b^{n+1})&,\\
{\blue \int_{a^{n+1}}^{b^{n+1}} (h^{n+1}-w)(x^{n+1}) \ud x^{n+1} = V,}&
\end{aligned}
\end{equation}
where $h^{n*}(x^{n+1})$ is the first order intermediate profile given in \eqref{inter-u-0} and $V$ is the initial volume.

Denote the results as the predictor $\tilde{a}^{n+1}, \tilde{b}^{n+1}, \tilde{h}^{n+1}(\tilde{x}^{n+1})$ for $\tilde{x}^{n+1}\in [\tilde{a}^{n+1}, \tilde{b}^{n+1}]$. To solve \eqref{newton}, one can use standard Newton's iterative method. 

Step 2. Explicit boundary update. Compute the one-side approximated derivative of $h^n$ at $b^n$ and $a^n$, denoted as $(\pt_x h^n)_N$ and $(\pt_x h^n)_0$. Then update
\begin{equation}\label{second-ab-move}
\begin{aligned}
\frac{a^{n+1}-a^n}{\Delta t}=\frac12\left\{ \sigma \sqrt{1+ (\pt_x w)_0^2}+ \sigma\sqrt{1+ (\pt_x \tilde{w})_0^2} +\frac{1+ (\pt_x h^n)_0(\pt_x w)_0 }{\sqrt{1+ (\pt_x h^n)_0^2}}+ \frac{1+ (\pt_x \tilde{h}^{n+1})_0(\pt_x \tilde{w})_0 }{\sqrt{1+ (\pt_x \tilde{h}^{n+1})_0^2}}\right\} ,\\
\frac{b^{n+1}-b^n}{\Delta t}=-\frac12\left\{ \sigma \sqrt{1+ (\pt_x w)_N^2}+ \sigma\sqrt{1+ (\pt_x \tilde{w})_N^2} +\frac{1+ (\pt_x h^n)_N(\pt_x w)_N }{\sqrt{1+ (\pt_x h^n)_N^2}}+ \frac{1+ (\pt_x \tilde{h}^{n+1})_N(\pt_x \tilde{w})_N }{\sqrt{1+ (\pt_x \tilde{h}^{n+1})_N^2}}\right\} 
\\
  {\blue \text{ with } (\pt_x w)_0:=\pt_x w(a^n),\,\, (\pt_x \tilde{w})_0:=\pt_x w(\tilde{a}^{n+1}),\,\, (\pt_x w)_N:=\pt_x w(b^n),\,\, (\pt_x \tilde{w})_N:=\pt_x w(\tilde{b}^{n+1}).}
\end{aligned}
\end{equation}

Step 3.
Solve $h^{n+1}(x)$ semi-implicitly.  With $h^{n+1}_0= w(a^{n+1}),\,\, h^{n+1}_N = w(b^{n+1}) $, for $x^{n+1}\in (a^{n+1}, b^{n+1}) $
\begin{align}\label{2nd-eq-r}
&\beta \frac{h^{n+1}(x^{n+1})-\tilde{h}^{n*}(x^{n+1})}{\Delta t}\frac12\left[\frac{1}{\sqrt{1+ (\pt_x h^{n})^2}}+ \frac{1}{\sqrt{1+ (\pt_x h^{n+1})^2}}\right]\\
=& \frac12 {\blue \left(\frac{\pt_{xx} {h}^{n+1}}{ \bbs{1+ (\pt_x h^{n+1})^2}^\frac32} +  \frac{\pt_{xx} h^n}{\bbs{1+ (\pt_x  h^n)^2}^\frac32} \right) }  -\frac{\kappa}{2} [(h^{n+1}+h^n) \cos \theta_0 +(x^n+x^{n+1}) \sin \theta_0  ] +\lambda^{n+\frac12},\nonumber\\
&\int_{a^{n+1}}^{b^{n+1}} (h^{n+1}-w)(x^{n+1}) \ud x^{n+1} = V\nonumber,
\end{align}
where $\tilde{h}^{n*}$ is the second order intermediate solution defined in \eqref{con-star} later.  Notice here  the equality holds in the sense of changing variables to the fixed domain $Z=\frac{x-a(t)}{b(t)-a(t)}\in[0,1]$ and $x^{n+1}, x^n$ are related to $Z$ by
$Z= Z(x^{n+1}, t^{n+1})= Z(x^n, t^n)= Z(x^{n+\frac12}, t^{n+\frac12}).$

We will give detailed derivation for the choice of the second order intermediate solution  $\tilde{h}^{n*}(x^{n+1})$ in Section \ref{sec_2nd_semiL}.
For convenience, we provide a pseudo-code for this scheme in Appendix \ref{code2-in}.

\subsubsection{Derivation of a second order scheme  based on the predictor-corrector method for DAEs with an algebraic solver upto second order accuracy}\label{sec-implicit}
To design a second order scheme, we illustrate the idea using the predictor-corrector method for an analogous DAEs. Assume we have an exact DAEs
\begin{equation}
\begin{aligned}
\dot{b}= f(b, u), \quad
0=g(b,u),
\end{aligned}
\end{equation}
where the second algebraic equation is equivalent to $u=G(b)$ for some function $G$.
However, in practice, one may not solve $u=G(b)$ exactly, which  in our case,  is to solve a nonlinear elliptic equation \eqref{newton}. Therefore, we design a predictor-corrector method to solve a DAEs with an algebraic solver upto second order accuracy.
Let $b^n, u^n$ be given such that $u^n=G(b^n)+O(\Delta t^2)$.
\\Step 1. Solve the predictor $\tilde{b}^{n+1}$ by forward Euler scheme
\begin{equation}
\frac{\tilde{b}^{n+1}-b^n}{\Delta t} = f(b^n, u^n).
\end{equation}
\\Step 2. Obtain the predictor $\tilde{u}^{n+1}$ by solving  algebraic equation up to a second order accuracy
\begin{equation}
\tilde{u}^{n+1}= G(\tilde{b}^{n+1})+O(\Delta t^2).
\end{equation}
\\Step 3. Solve the corrector $b^{n+1}$ by the trapezoidal method
\begin{equation}\label{trape}
\frac{{b}^{n+1}-b^n}{\Delta t} = \frac12[f(b^n, u^n)+f(\tilde{b}^{n+1}, \tilde{u}^{n+1})].
\end{equation}
\\Step 4. Obtain the corrector $u^{n+1}$ by solving the  algebraic equation up to a second order accuracy
\begin{equation}
{u}^{n+1}= G({b}^{n+1})+O(\Delta t^2).
\end{equation}

Indeed, we show the second order error estimate of this scheme below. Denote function $F(b):=f(b,G(b))=f(b, u)$. Then from \eqref{trape},  we have
\begin{equation}
\begin{aligned}
\frac{{b}^{n+1}-b^n}{\Delta t} &= \frac12[f(b^n, u^n)+f(\tilde{b}^{n+1}, \tilde{u}^{n+1})]\\
&=\frac12[f(b^n, G(b^n))+ f(\tilde{b}^{n+1}, G(\tilde{b}^{n+1}))]+ O(\Delta t^2)\\
&=\frac{1}{2} [F(b^n)+ F(\tilde{b}^{n+1})]+O(\Delta t^2),
\end{aligned}
\end{equation}
which gives the second order accuracy for $b^{n+1}$ and thus $u^{n+1}$.

\subsubsection{Derivation of the second order accuracy for the semi-Lagrangian term $\tilde{h}^{n*}(x^{n+1})$}\label{sec_2nd_semiL}
  Now we derive the second order scheme based on \eqref{Z-re}. Notice the spatial grids are moving along time.  We need to  map  grids at different time back to the same fixed domain $Z\in[0,1]$ based on \eqref{Z-re}. Furthermore, to achieve second order accuracy, we apply midpoint scheme and define 
\begin{equation}\label{def12}
a^{n+\frac12}:= \frac{a^n+a^{n+1}}{2}, \quad b^{n+\frac12}= \frac{b^n+ b^{n+1}}{2}.
\end{equation}
  We illustrate the second order accuracy for the following term involving time derivative, for independent variable $x^{n+\frac12}\in (a^{n+\frac12}, b^{n+ \frac12})$,
\begin{align}\label{tm_I3}
\pt_t h(x^{n+\frac12}, t^{n+\frac12}) &= \pt_t U(Z, t^{n+\frac12}) + \pt_Z U(Z, t^{n+\frac12})  \pt_t Z(x^{n+\frac12}, t^{n+\frac12}) \\
&=:I_1(x^{n+\frac12}, t^{n+\frac12})+ I_2(x^{n+\frac12}, t^{n+\frac12})I_3(x^{n+\frac12}, t^{n+\frac12}).
\end{align}
Below, we approximate $I_1, I_2, I_3$ upto second order accuracy.

First, notice $Z(x,t)$ at different time is related by \eqref{Z-re}. Thus whenever we evaluate  some quantity $U$ at different time, for instance at $t^{n+1}$, we mean $U(Z(x^{n+1}, t^{n+1}), t^{n+1})$.    Recall $U(Z,t)=h(x,t)$. Therefore, by midpoint scheme, $I_1(x^{n+\frac12}, t^{n+\frac12})=\pt_t U(Z, t^{n+\frac12})$ can be approximated by
\begin{align*}
I_1(x^{n+\frac12}, t^{n+\frac12})= \frac{U(Z, t^{n+1})-U(Z, t^n)}{\Delta t}+ O(\Delta t^2) = \frac{h^{n+1}(x^{n+1})-h^n(x^n)}{\Delta t}+ O(\Delta t^2),
\end{align*}
where we use the numerical solution $h^{n+1}(x^{n+1})\approx h(x^{n+1}, t^{n+1})=U(Z, t^{n+1})$ and $h^n(x^n)$ is similar. 
Here the  equality holds in the sense of changing variables $x^n, x^{n+\frac12}, x^{n+1}$ to the fixed domain $Z=\frac{x-a(t)}{b(t)-a(t)}\in[0,1]$ and
$Z= Z(x^{n+1}, t^{n+1})= Z(x^n, t^n)= Z(x^{n+\frac12}, t^{n+\frac12}).$

Second, 
by midpoint scheme, $I_2=  \pt_Z U(Z, t^{n+\frac12})$ can be approximated by
\begin{align*}
I_2= \frac12 \pt_Z [U(Z, t^n)+ U(Z, t^{n+1})]+O(\Delta t^2).
\end{align*}
Recall the scheme in Section \ref{sec_2nd_scheme} use  $\tilde{h}^{n+1}(\tilde{x}^{n+1})$ for $\tilde{x}^{n+1}\in [\tilde{a}^{n+1}, \tilde{b}^{n+1}]$ as predictor instead of the nonlinear unknown $h^{n+1}(x^{n+1})$.
From \eqref{accu23}, we have the $|\tilde{x}^{n+1}-x^{n+1}|=O(\Delta t^2)$, which enables us to replace the unknown term by the predictor. Then changing the intermediate  variable $Z$ back to $x$ gives
\begin{align*}
I_2&= \frac12[ {\pt_x h^{n}(x^n)}(b^n-a^n) +{\pt_x\tilde{h}^{n+1}(\tilde{x}^{n+1})}(\tilde{b}^{n+1}-\tilde{a}^{n+1})] + O(\Delta t^2).
\end{align*}

Third, we turn to approximate $I_3$. Still by the midpoint scheme, the last term $I_3= \pt_t Z(x^{n+\frac12}, t^{n+\frac12})$ in \eqref{tm_I3} can be approximated by
\begin{align*}
I_3&= \frac{1}{\Delta t} \left( \frac{x^{n+\frac12}-a^{n+1}}{b^{n+1}-a^{n+1}} - \frac{x^{n+\frac12} - a^n}{b^n- a^n} \right)  + O(\Delta t^2).
\end{align*}

Notice from \eqref{inverZ} and \eqref{def12}, we have
\begin{equation}
x^{n+\frac12}-a^{n+\frac12} = (b^{n+\frac12} - a^{n+\frac12}) Z = \frac12(b^n+b^{n+1} - a^{n+1}-a^n) Z,
\end{equation}
which is recast as
\begin{equation}\label{3.71}
x^{n+\frac12}-a^{n+\frac12}=\frac{1}{2}(b^{n+1}-a^{n+1} )( b^n -a^n) \left(  \frac{1}{b^{n+1}-a^{n+1}} + \frac{1}{b^n - a^n}   \right) Z, \quad Z\in[0,1].
\end{equation}
Notice also  the relation
$a^{n+1}-a^{n+\frac12}= a^{n+\frac12}-a^n = \frac{a^{n+1}-a^n}{2}$. 
Therefore,  the last term $I_3= \pt_t Z(x^{n+\frac12}, t^{n+\frac12})$ in \eqref{tm_I3} can be approximated by
\begin{align*}
I_3&= \frac{1}{\Delta t} \left( \frac{x^{n+\frac12}-a^{n+1}}{b^{n+1}-a^{n+1}} - \frac{x^{n+\frac12} - a^n}{b^n- a^n} \right)  + O(\Delta t^2)\\
&= \frac{x^{n+\frac12}- a^{n+\frac12}}{\Delta t} \left( \frac{1}{b^{n+1}-a^{n+1}} - \frac{1}{b^n - a^n}  \right) - \frac{a^{n+1}-a^n}{2 \Delta t} \left(  \frac{1}{b^{n+1}-a^{n+1}} + \frac{1}{b^n - a^n}   \right)+ O(\Delta t^2)\\
&=- \frac1{2\Delta t} \left(  \frac{1}{b^{n+1}-a^{n+1}} + \frac{1}{b^n - a^n}   \right)\left[ (a^{n+1}-a^n) +Z(( b^{n+1}- a^{n+1})-(b^n-a^n) )   \right]+ O(\Delta t^2)\\
&=- \frac1{2\Delta t} \left(  \frac{1}{b^{n+1}-a^{n+1}} + \frac{1}{b^n - a^n}   \right)\left( x^{n+1}-x^n \right)+ O(\Delta t^2),
\end{align*}
where we used \eqref{3.71} in the third equality.

Therefore,  we now define the intermediate variable
$\tilde{h}^{n*}(x^{n+1})$ such that
 \begin{equation}
 \pt_t h(x^{n+\frac12}, t^{n+\frac12}) = I_1+ I_2 I_3 = \frac{h^{n+1}(x^{n+1})-\tilde{h}^{n*}(x^{n+1})}{\Delta t}+ O(\Delta t^2).
 \end{equation}
 Using the approximated formulas for $I_1, I_2, I_3$ above, we propose the semi-Lagrangian term
\begin{equation}\label{con-star}
\begin{aligned}
\tilde{h}^{n*}(x^{n+1}):= h^n(x^n) &+ \frac{1}{4}\left( x^{n+1}-x^n \right)\cdot\\& \left[ \pt_x h^n(x^n) \left( 1+ \frac{b^n-a^n}{\tilde{b}^{n+1}-\tilde{a}^{n+1}} \right) +\pt_x \tilde{h}^{n+1}(\tilde{x}^{n+1}) \left( 1+  \frac{\tilde{b}^{n+1}-\tilde{a}^{n+1}}{b^n-a^n}  \right)\right].
\end{aligned}
\end{equation}
In summary, we have the second order approximation
\begin{equation}\label{inter-u-star}
\pt_t h (x^{n+\frac12}, t^{n+\frac12}) = \frac{h^{n+1}(x^{n+1})- \tilde{h}^{n*}(x^{n+1})}{\Delta t} + O(\Delta t^2).
\end{equation}

Since this is a key step in the numerical discretization, so we also 
 give the second order spatial discretization of $\tilde{h}^{n*}(x^{n+1})$ to see it has a similar form with \eqref{inter-u-0}. Denote spatial grid size $\tau^n=\frac{b^{n}-a^n}{N}$ and $\tau^{n+1}=\frac{b^{n+1}-a^{n+1}}{N}$.
Notice the second order spatial discretizations for $I_2, I_3$ are
\begin{align*}
I_2= \frac{N}{4} (h^{n}_{j+1}- h_{j-1}^n + \tilde{h}^{n+1}_{j+1}-\tilde{h}^{n+1}_{j-1}) + O(\Delta t^2 + \frac1{N^2}),\\
I_3= -\frac12 \left(  \frac{1}{b^{n+1}-a^{n+1}} + \frac{1}{b^n - a^n}   \right) \left[ (a^{n+1}-a^n) + j(h^{n+1}-h^n) \right] + O(\Delta t^2).
\end{align*}
Define the second order spatial discretization
\begin{equation}\label{tm376}
\begin{aligned}
\tilde{h}^{n*}(x^{n+1}_j):= h^n(x_j^n) + \frac{1}{8} \left( \frac{1}{\tau^{n+1}} + \frac{1}{\tau^{n}} \right) (h^{n}_{j+1}- h_{j-1}^n + \tilde{h}^{n+1}_{j+1}-\tilde{h}^{n+1}_{j-1}) \left[ (a^{n+1}-a^n) + j(\tau^{n+1}-\tau^n) \right].
\end{aligned}
\end{equation}

%

\subsubsection{Second order truncation error estimates for the predictor-corrector method}

The strategy of the second order truncation error estimates is the same as that of Lemma \ref{first-lem1}. Namely, we notice that   in a fixed domain in terms of $Z\in[0,1]$ the predictor-corrector method gives us a second order scheme and then we prove the mapping from $Z$ to  $x^{n+1}$  keeps the second order accuracy. For completeness, we put the proof of Lemma \ref{second-lem1} in Appendix \ref{appA}.

\begin{lem}\label{second-lem1}
{\blue Assume $a(t), b(t)\in C^3([t^n,t^{n+1}])$ and $h(x,t)\in  C_{x,t}^{6,3}([a(t), b(t)]\times  [t^n,t^{n+1}])$  be the exact solution to \eqref{wet-eq-r}}   with initial data at $t=t^n$, $a^n, b^n, h^n(x^n)$ for $x^n\in[a^n, b^n]$. Let $\tilde{a}^{n+1}, \tilde{b}^{n+1}$ be the predictor obtained by \eqref{end_a} and $\tilde{h}^{n+1}(\tilde{x}^{n+1})=\tilde{U}^{n+1}(Z)$ for $\tilde{x}^{n+1}\in [\tilde{a}^{n+1}, \tilde{b}^{n+1}]$ be the predictor obtained by \eqref{newton}. Then we have the second order truncation error estimates
\begin{equation}\label{second-abn}
\begin{aligned}
\frac{a(t^{n+1})-a^n}{\Delta t}=&\frac12\Big\{ \sigma \sqrt{1+ (\pt_x w)_0^2}+ \sigma\sqrt{1+ (\pt_x \tilde{w})_0^2} \\
&+\frac{1+ (\pt_x h^n)_0(\pt_x w)_0 }{\sqrt{1+ (\pt_x h^n)_0^2}}+ \frac{1+ (\pt_x \tilde{h}^{n+1})_0(\pt_x \tilde{w})_0 }{\sqrt{1+ (\pt_x \tilde{h}^{n+1})_0^2}}\Big\}+ O(\Delta t^2) ,\\
\frac{b(t^{n+1})-b^n}{\Delta t}=&-\frac12\Big\{ \sigma \sqrt{1+ (\pt_x w)_N^2}+ \sigma\sqrt{1+ (\pt_x \tilde{w})_N^2} \\
&+\frac{1+ (\pt_x h^n)_N(\pt_x w)_N }{\sqrt{1+ (\pt_x h^n)_N^2}}+ \frac{1+ (\pt_x \tilde{h}^{n+1})_N(\pt_x \tilde{w})_N }{\sqrt{1+ (\pt_x \tilde{h}^{n+1})_N^2}}\Big\} + O(\Delta t^2),
\end{aligned}
\end{equation}
where  {\blue $ (\pt_x w)_0:=\pt_x w(a^n),\,\, (\pt_x \tilde{w})_0:=\pt_x w(\tilde{a}^{n+1}),\,\, (\pt_x w)_N:=\pt_x w(b^n),\,\, (\pt_x \tilde{w})_N:=\pt_x w(\tilde{b}^{n+1})$} and
\begin{align}\label{second-hn}
\beta \frac{h(t^{n+1})-h^{n*}}{\Delta t}\frac12&\left[\frac{1}{\sqrt{1+ (\pt_x h^{n})^2}}+ \frac{1}{\sqrt{1+ (\pt_x  {h}^{n+1})^2}}\right]=\frac12 \left(\frac{\pt_{xx} {h}^{n+1}}{\bbs{1+ (\pt_x  {h}^{n+1})^2}^\frac32} +  \frac{\pt_{xx} h^n}{\bbs{1+ (\pt_x  h^n)^2}^\frac32} \right)\\
 &-\frac{\kappa}{2} [(h^{n+1}+h^n) \cos \theta_0 +(x^n+x^{n+1}) \sin \theta_0  ] +\lambda^{n+1}+ O(\Delta t^2),\nonumber
\end{align}
with $h^{n*}$  defined in \eqref{con-star}.
\end{lem}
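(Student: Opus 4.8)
\emph{Proof plan.} The plan is to run the whole argument on the fixed reference interval $Z\in[0,1]$, exactly as in the proof of Lemma~\ref{first-lem1}. Setting $U(Z,t):=h(x(Z,t),t)$ with $x(Z,t)=a(t)+(b(t)-a(t))Z$, the system \eqref{wet-eq-r} becomes a nonlinear parabolic equation for $U$ on $[0,1]$ whose coefficients are smooth functions of $a(t),b(t),a'(t),b'(t)$, coupled to the two scalar ODEs $a'(t)=\Psi_a(a(t),t)$, $b'(t)=\Psi_b(b(t),t)$ with
\[
\Psi_b(b,t):=-\sigma\sqrt{1+(\pt_x w(b))^2}-\frac{1+\pt_x h(b,t)\,\pt_x w(b)}{\sqrt{1+(\pt_x h(b,t))^2}},
\]
and $\Psi_a$ analogous, both $C^2$ in $(b,t)$ by the assumed regularity of $h$ and $w$ (the denominators are bounded away from zero). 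In these variables the proposed scheme is precisely a forward-Euler predictor followed by a trapezoidal / Crank--Nicolson corrector, so once the predictor is shown to be first-order accurate and all the moving-grid maps are controlled to $O(\Delta t^2)$, the classical one-step truncation estimates yield \eqref{second-abn} and \eqref{second-hn}. The genuinely new device, the semi-Lagrangian time-difference term $\tilde h^{n*}$, has already been handled in Section~\ref{sec_2nd_semiL}: identity \eqref{inter-u-star} states $\pt_t h(x^{n+\frac12},t^{n+\frac12})=\big(h^{n+1}(x^{n+1})-\tilde h^{n*}(x^{n+1})\big)/\Delta t+O(\Delta t^2)$, which we may invoke directly.

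The predictor estimates are the analog of Steps~1--2 in the DAE illustration of Section~\ref{sec-implicit}. Since $\tilde a^{n+1},\tilde b^{n+1}$ are produced by forward Euler \eqref{end_a} from the exact data $a^n=a(t^n)$, $b^n=b(t^n)$, $h^n=h(\cdot,t^n)$, one has $\tilde a^{n+1}=a(t^{n+1})+O(\Delta t^2)$, $\tilde b^{n+1}=b(t^{n+1})+O(\Delta t^2)$. For $\tilde h^{n+1}$ I would argue that \eqref{newton} is one first-order implicit step from exact data: it differs from \eqref{tm313} only by treating the mean-curvature operator fully implicitly, hence its local truncation error is still $O(\Delta t)$ by the computation in Lemma~\ref{first-lem1}; combining this with a stability/regularity estimate for the nonlinear elliptic solve — of the same flavour as Proposition~\ref{prop_st} together with standard one-dimensional elliptic regularity — gives $\tilde U^{n+1}(Z)=U(Z,t^{n+1})+O(\Delta t^2)$ uniformly in $Z$ \emph{together with its first $Z$-derivative}. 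Translating back, $\tilde x^{n+1}(Z)=x(Z,t^{n+1})+O(\Delta t^2)$, $|\tilde x^{n+1}-x^{n+1}|=O(\Delta t^2)$, and the one-sided traces obey $(\pt_x\tilde h^{n+1})_0=\pt_x h(a(t^{n+1}),t^{n+1})+O(\Delta t^2)$, $(\pt_x\tilde h^{n+1})_N=\pt_x h(b(t^{n+1}),t^{n+1})+O(\Delta t^2)$, and similarly for $\pt_x\tilde w$ at those points.

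For the boundary identities \eqref{second-abn}, apply the trapezoidal quadrature error to $b'(t)=\Psi_b(b(t),t)$: since $b\in C^3[t^n,t^{n+1}]$,
\[
\frac{b(t^{n+1})-b(t^n)}{\Delta t}=\tfrac12\big[\Psi_b(b(t^n),t^n)+\Psi_b(b(t^{n+1}),t^{n+1})\big]+O(\Delta t^2).
\]
Because $b^n=b(t^n)$ and $h^n=h(\cdot,t^n)$ exactly, $\Psi_b(b(t^n),t^n)$ is literally the first, non-tilde bracket in \eqref{second-abn}; because $\Psi_b$ is Lipschitz in $(b,\pt_x h|_b)$ on the relevant compact set, the predictor estimates of the previous paragraph let us replace $\Psi_b(b(t^{n+1}),t^{n+1})$ by the tilde bracket evaluated at $\big(\tilde b^{n+1},(\pt_x\tilde h^{n+1})_N,(\pt_x\tilde w)_N\big)$ up to $O(\Delta t^2)$. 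This proves the $b$-identity, and the $a$-identity is identical.

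Finally, for the interior identity \eqref{second-hn}, evaluate the exact PDE in \eqref{wet-eq-r} at the midpoint $(x^{n+\frac12},t^{n+\frac12})$ along the fixed characteristic $Z=Z(x^{n+\frac12},t^{n+\frac12})$. The time-derivative/friction term on the left is handled by \eqref{inter-u-star} together with $\tfrac12\big[(1+(\pt_x h^n)^2)^{-1/2}+(1+(\pt_x h^{n+1})^2)^{-1/2}\big]=(1+(\pt_x h(x^{n+\frac12},t^{n+\frac12}))^2)^{-1/2}+O(\Delta t^2)$, a midpoint average of smooth quantities at $(x^n,t^n)$ and $(x^{n+1},t^{n+1})$. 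On the right, the curvature term, the gravitational/inclination term $\kappa(h\cos\theta_0+x\sin\theta_0)$, and the multiplier are each replaced by the symmetric half-sum of their values at $(x^n,t^n)$ and $(x^{n+1},t^{n+1})$, each replacement costing $O(\Delta t^2)$ by Taylor expansion about $t^{n+\frac12}$ and the $C^{6,3}_{x,t}$ regularity; the Lagrange-multiplier slot is pinned by the volume constraint, which the exact solution satisfies at $t^{n+1}$, so it contributes only at $O(\Delta t^2)$. Collecting these gives \eqref{second-hn}. The main obstacle I anticipate is the second paragraph: upgrading first-order accuracy of the implicit predictor \eqref{newton} from an $L^\infty$ statement to a $C^1$ statement, since it is the boundary traces $(\pt_x\tilde h^{n+1})_{0,N}$ and the derivative factors inside $\tilde h^{n*}$ (see \eqref{con-star}) — not merely the values of $\tilde h^{n+1}$ — that feed the corrector, and this requires a genuine stability estimate for the nonlinear elliptic step rather than a bare Taylor expansion.
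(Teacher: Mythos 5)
Your plan follows essentially the same route as the paper's proof: change variables to the fixed domain $Z\in[0,1]$, treat the boundary updates as a forward-Euler/trapezoidal (Heun) predictor--corrector for the ODEs \eqref{A20} so that replacing the exact endpoint values by the predictor values costs $O(\Delta t^2)$, and obtain \eqref{second-hn} by evaluating the PDE at $t^{n+\frac12}$, invoking \eqref{inter-u-star} for the semi-Lagrangian term and midpoint averaging (the paper's Claim 2) for the remaining terms. The step you single out as the main obstacle --- upgrading the predictor's $O(\Delta t^2)$ accuracy from values to first derivatives and boundary traces --- is exactly the paper's Claim 1, whose proof the paper itself only sketches as a change-of-variables/elliptic-solver argument, so your proposal is not missing any ingredient that the paper actually supplies.
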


\section{Validations and computations}\label{sec4}
In this section, we will first use the DAEs solution for the quasi-static dynamics to check the second order accuracy of the numerical schemes proposed in the last section. Then we design several challenging and important examples: (i) a periodic breathing droplet with a closed formula solution and a long-time computational validation; (ii) dynamics of a droplet on an inclined rough surface and in a ``Utah teapot";  (iii) Kelvin pendant droplet with repeated bulges and the corresponding desingularized DAEs solver for quasi-static dynamics based on horizontal  graph representation.

\subsection{Desingularized DAEs formula and accuracy check  for the quasi-static dynamics }
In this section, we will first derive the DAEs for the quasi-static dynamics using a desingularized formula. Then we give an accuracy check for the case $w(x)=0$ and $\theta_0=0$ using the corresponding quasi-static solutions, which can be obtained using the desingularized  DAEs solver.
\subsubsection{DAEs  description of the quasi-static dynamics}\label{sec4.2.1}
Given volume $V$, assume  $w(x)=0$ and $\theta_0=0$.  If we assume quasi-static condition $\beta=0$ in \eqref{wet-eq-r}, the quasi-static droplet profile $u$  for any fixed $t$
satisfies
\begin{equation}\label{u-quasi}
\begin{aligned}
\frac{\pt }{\pt x}\left( \frac{\pt_x u}{\sqrt{1+ (\pt_x u)^2}} \right)-\kappa u+ \lambda(t)=0 , \quad  a(t) < x< b(t),\\
\int_{a(t)}^{b(t)} u(x,t) \ud x = V,
\end{aligned}
\end{equation}
with boundary condition $u(a(t), t)= u(b(t),t)=0.$ Multiplying \eqref{u-quasi} by $u$ and integration by parts give immediately that $\kappa>0$ implies $\lambda>0.$
In this subsection,  we will  derive the following DAEs for $b(t), \lambda(t), u_m(t)$ in three steps, which completely describes the quasi-static motion
\begin{equation}\label{DAEwet}
\begin{aligned}
b'(t) =- \sigma- \frac{\kappa u_{m}^2(t)}{2}+ \lambda(t) u_m(t)- 1,\\
u_mb - \frac{V}{2}=  \sqrt{2}\int_0 ^{u_m} \sqrt{\frac{u_m-u}{2\lambda - {\kappa}(u_m + u)}} \frac{J(u)}{\sqrt{1+J(u)}} \ud u,\\
b=\sqrt{2} \int_0^{u_m} \frac{1}{\sqrt{u_m-u}}\frac{1}{\sqrt{2\lambda - {\kappa}(u_m + u)}} \frac{J(u)}{\sqrt{1+J(u)}} \ud u,
\end{aligned}
\end{equation}
where $u_m(t)$ is the maximal point of $u(x,t)$ . Here $J(u)$ is the shorthand notation for the function $J(u, \theta)=-\frac{\kappa (u^2-u_m^2)}{2} + \lambda (u- u_m)+1$, where $\cos \theta$ is solved from \eqref{jmn}.

 {\blue Recall $\theta$ is the contact angle such that 
 $\tan \theta = - \pt_x u |_b.$ Thus we have \eqref{cos} and from \eqref{jmn}  the boundary condition  in \eqref{quasi-eq} becomes
$$b'= -\sigma-\frac{1}{\sqrt{1+(\pt_x u)^2|_b}}= -\sigma-\cos \theta = - \sigma- \frac{\kappa u_{m}^2(t)}{2}+ \lambda(t) u_m(t)- 1$$
Then 
from Lemma \ref{lem_re}, the   first equation in \eqref{DAEwet} comes from the first equation in \eqref{rel_n}. }

The second equation is combination of the second and third equation in \eqref{rel_n},
\begin{equation} \label{desin00}
\begin{aligned}
u_m b - \frac{V}{2} 
=&  \int_0 ^{u_m} (u-u_m) \frac{-J(u)}{\sqrt{1-J(u)^2}} \ud u\\
=&  \sqrt{2}\int_0 ^{u_m} \sqrt{\frac{u_m-u}{2\lambda - {\kappa}(u_m + u)}} \frac{J(u)}{\sqrt{1+J(u)}} \ud u.
\end{aligned}
\end{equation} 

On the other hand, to desingularize $X_u$ in the numerical implementation, denote $\psi:=\sqrt{u_m -u}$. Then we have
\begin{equation}
    J(u)=1-[ \lambda-(2u_m -\psi^2) \frac{\kappa}{2}]\psi^2.
\end{equation}
{\blue Recall \eqref{ym000} and particularly in the case $u_{xx}(0)<0$, we have
$$\frac{u_{xx}}{\sqrt{1+u_x^2}^3} \Big|_{x=0} = \pt_x \bbs{\frac{u_x}{\sqrt{1+u_x^2}}}\Big|_{x=0}= \kappa u_m -\lambda <0.$$ }
Thus one can check
\begin{align*}
    \frac{\ud \psi}{\ud x}= \frac{1}{2\psi} \frac{\sqrt{1-J(u)^2}}{J}= \frac{1}{2\psi} \frac{\sqrt{1-J(u)}\sqrt{1+J(u)}}{J}
    =\frac{\sqrt{2 \lambda-\kappa(2u_m-\psi^2)}}{2\sqrt{2}}\frac{\sqrt{1+J(u)}}{J(u)}>0
\end{align*}
for all $0\leq x \leq b.$
Therefore there is no singularity for $\frac{\ud X}{\ud \psi}$.
Integrating $\frac{\ud X}{\ud \psi}$ for $x$ from $0$ to $b$ while $\psi$ from $0$ to $\sqrt{u_m}$, we have
\begin{align}\label{alg}
    b=&\int_0^{\sqrt{u_m}} \frac{2 \sqrt{2}}{\sqrt{2\lambda-\kappa(u_m+u)}}\frac{J(u)}{\sqrt{1+J(u)}} \ud \psi,
\end{align}
which yields the third equation in \eqref{DAEwet}.
 However, to implement this singular integral we need to cluster more points at the singular point near $u_m$, so we use the desingularized   midpoint rule suggested by $\psi=\sqrt{u_m-u}$. Let $\tau=\frac{\sqrt{u_m}}{N}$, $\psi_{i+\frac12}:=(i+\frac12)\tau$ and $u_{i+\frac12}:= u_m-\psi_{i+\frac12}^2$. Then \eqref{alg} can be approximated by
\begin{equation}\label{desin-dis}
b\approx 2\sqrt{2} \sum_{i=0}^{N-1} \frac{\tau}{\sqrt{2\lambda - {\kappa}(u_m + u_{i+\frac12})}} \frac{J(u_{i+\frac12})}{\sqrt{1+J(u_{i+\frac12})}}.
\end{equation}

 With the desingularized formula \eqref{desin-dis},  there is no singularity in the DAEs \eqref{DAEwet} for $b(t), \lambda(t), u_m(t)$, so it  can be solved efficiently and accurately by any ODE solver such as \textit{ode15s} in Matlab, whose results will be used to check the accuracy for our PDE solvers.  Furthermore,  we can solve the capillary surface
$X(u, t)$ by the integral formula \eqref{intg}.
Finally, we give the
equilibrium solution for DAEs \eqref{DAEwet}. Taking $b'(t)=0$ in \eqref{DAEwet}, we have the algebraic equation
\begin{equation}\label{ag1}
 -\frac{\kappa u_{m}^2(t)}{2}+ \lambda(t) u_m(t)= \sigma+ 1
\end{equation}
 and  can solve  uniquely the steady solution  $(b, u_m, \lambda)$.

\subsubsection{Accuracy check between DAEs and 1st/2nd order PDE solvers}

%

We first use the DAEs solver \textit{ode15s} in Matlab to solve   DAEs \eqref{DAEwet}  
with the initial data
\begin{equation}
\theta_{in} = \frac{1.3\pi}{8}, \quad u_m(0)=1.
\end{equation}
With $u_m(0)=1$, we start the DAEs by first solving the compatible initial data $b(0)$ and $\lambda(0)$ from \eqref{DAEwet}.
The physical parameters in DAEs \eqref{DAEwet} are 
\begin{equation}
{\blue \kappa = 0.1}, \quad  \sigma= - \cos(\theta_f),
\end{equation}
where $\theta_f=\frac{3.9\pi}{8}$ is the Young's angle.
 The final time in \textit{ode15s} is  $T=1$. We obtain {\blue  $b(0)=4.532141803665366$,   $b(1)=3.747880231652922$.}

Compared with the DAEs solution, we show below the accuracy check for the first order scheme in Section \ref{sec-1st-scheme}  and the second order scheme in Section \ref{sec3.2} in Table \ref{table_1st}. The absolute error in \textit{ode15s} is set to be $10^{-13}$, which is smaller than the absolute error in the accuracy check Table \ref{table_1st}. The residual tolerance in  Newton's iteration in the second order scheme is set to be $10^{-12}$.   We use the same parameters $\beta=0,$  {\blue $\kappa=0.1$}, initial angle $\theta_{in}=\frac{1.3\pi}{8}$, final Young's angle $\theta_Y=\frac{3.9\pi}{8}$, final time $T=1$, the  initial boundary $b(0)=4.532141803665366$ and {\blue  choose the same initial spherical cap droplet $u(x,0) = - R_0 \cos\theta_{in} + \sqrt{R_0^2-x^2}, \, R_0 :=\frac{b(0)}{\sin \theta_{in}} $}  in the first/second order schemes. For several $M_n$ listed in the tables, we take time step as $\Delta t= \frac{T}{M_n}$ and  moving grid size $\Delta x= \frac{b(t)-a(t)}{N_n}$ with $N_n=8M_n$. The absolute error $e_n$ between numeric solutions and the DAEs solution $b(1)=3.747880231652922$ is listed in the second column of the tables.
The corresponding order of accuracy $\alpha = \frac{\ln(e_n/e_{n+1})}{\ln(M_{n+1}/M_{n})}$ is listed in the last column of the tables.

\begin{table}[ht]
\caption{Accuracy check: 1st/2nd order schemes in Section \ref{sec-1st-scheme}  v.s. exact quasi-static solution to \eqref{DAEwet} using \textit{ode15s}. Parameters: $T=1$, $\kappa=0.1$, $\theta_Y=\frac{3.9\pi}{8}$, $\theta(0)=\frac{1.3\pi}{8}$, $b(0)=3.832203449327490$, time step $\Delta t= \frac{T}{M}$, $M$ listed on the table, moving grid size $\Delta x= \frac{b(t)-a(t)}{N}$,  $N=8M$. Absolute errors $e_n$ are computed by comparing with $b(1)=3.747880231652922$.}\label{table_1st}
\begin{tabular}{|c|c|c|c|c|}
\hline 
\, &  \multicolumn{2}{c|}{1st order scheme} & \multicolumn{2}{c|}{2nd order scheme} \\ 
\hline 
M & Error at $T=1$ & Order of accuracy & Error at $T=1$ & Order of accuracy \\ 
\hline \hline
$40$ & $e_1=\num{1.528E-3}$ &  \,& $e_1=\num{1.799E-5}$ & \\
$80$ & $e_2=\num{7.613E-4}$ &1.0052  & $e_2=\num{4.623E-6}$ &1.9606\\
$160$ & $e_3=\num{3.799E-4}$ &1.0029  & $e_3=\num{1.173E-5}$ &1.9792\\
$320$ & $e_4=\num{1.897E-4}$ &1.0015 & $e_4=\num{2.963E-7}$ &1.9845\\
\hline
\end{tabular} 
\end{table}

\subsection{Breathing droplet: closed formula and long-time validation}\label{sec4.3}
In this section, we construct a breathing droplet solution motivated by the  spherical cap closed-form solution and use this example to demonstrate a long time validity of our numerical schemes in Section \ref{sec3}.

Denote the mean curvature of the capillary surface $u$ in the direction of outer normal as $H$, which is given by $H= -\nabla \cdot\left( \frac{\nabla u}{\sqrt{1+|\nabla u|^2}} \right)$ in the graph representation. Here $u$ is the piecewise graph representation of capillary surface.
 When $\kappa=0$, the governing equation for the quasi-static dynamics becomes $H=\lambda$, where $\lambda$ is a function of $t$. This means the quasi-static profile has constant mean curvature $\lambda(t)$ everywhere on the capillary surface. Assume the initial droplet has the wetting domain $\{x\in \mathbb{R}^{d-1}; \,|x|\leq b(0)\}$. Due to the rotation invariance for both equation and initial wetting domain, the  solution will remain axially symmetric, denoted as $u(r,t)$.  As a consequence,  the quasi-static profile is a spherical cap, whose center may be above the ground. 
To describe this spherical cap  solution, we denote the height of the center as $u^*(t)\in \mathbb{R}$. 
Furthermore, notice the mean curvature of a $d$-dimensional ball is $H=\frac{d-1}{R}$, where $R$ is the radius of the spherical cap.
 
Consider the partially wetting case in 3D when the droplet is represented by the single graph function $u(r,t)$, $0\leq r \leq b(t)$.
Using  $H= -\nabla \cdot\left( \frac{\nabla u}{\sqrt{1+|\nabla u|^2}} \right)= - \frac{1}{r} \pt_r \left( \frac{r \pt_r u}{\sqrt{1+ (\pt_r u)^2}} \right)=\lambda(t)=\frac{2}{R(t)}$, we can solve
\begin{equation}
u_m(t) - u(r,t) = \frac{2}{\lambda(t)} \left(1-\sqrt{1-\left(\frac{\lambda(t)r}{2}\right)^2}\right).
\end{equation}  
Then  $u^*(t)=\frac{2}{\lambda(t)}-u_m(t)$ and  we have
\begin{equation}\label{cap}
(u(r,t)-u^*(t))^2 + r^2 = R(t)^2.
\end{equation}
For a droplet in the non-wetting case, the capillary surface can not be expressed uniquely by the graph of a function $u(r)$.
In the non-wetting case, in which the center $u^*(t)$ is above the ground, one shall use two graph representation (with same notations) for $0\leq u\leq u^*(t)$ and $u^*(t)\leq u\leq u_m$ respectively; see also the horizontal graph representation in Section \ref{sec4.4}.  The spherical cap formula \eqref{cap} holds true for these two pieces.

Recall the contact angle $\theta$ such as $\tan \theta = - \pt_r u|_{r=b}$.
Then by elementary calculation we obtain the classical spherical cap volume formula  in 3D
\begin{equation}\label{re-b-3}
\frac{V}{b^3}= \frac{\pi}{3\sin^3 \theta} (2-3 \cos \theta+ \cos^3 \theta)
\end{equation}
and
in 2D, the formula becomes
\begin{equation}\label{vol2d}
\frac{V}{X(0)^2}= \frac{\theta}{\sin^2 \theta}-\frac{ \cos \theta}{\sin \theta}.
\end{equation}

\subsubsection{ Construct an exact breathing droplet solution and compare with numerical simulations}
Motivated by the spherical cap solution, to check the long-time validation, we construct a breathing spherical cap solution with a prescribed oscillating contact angle $\theta(t)$ satisfying
\begin{equation}\label{breathing0}
\begin{aligned}
\beta \frac{\pt_t u(x,t)}{\sqrt{1+ (\pt_x u)^2}}= \frac{\pt }{\pt x}\left( \frac{\pt_x u}{\sqrt{1+ (\pt_x u)^2}} \right)-\kappa(t) u+\lambda(t), \quad x\in(-b(t),b(t)),\\
b'(t)= -\sigma(t)-\frac{1}{\sqrt{1+(\pt_x u)^2}},\quad x=b(t),\\
\int_{-b(t)}^{b(t)} u(x,t) \ud x  = V,
\end{aligned}
\end{equation}
where the parameters $\kappa(t), \sigma(t)$ are determined below. 

Now we proceed to derive the formulas $\kappa(t), \sigma(t)$ for this breathing droplet.  
 Given $\theta(t)$ with oscillations, we will first calculate $u(x,t)$ and $b(t)$ from the spherical cap formula and then find $\kappa(t)$ and $\sigma(t)$ such that the PDE \eqref{breathing0} holds with the Lagrangian multiplier $\lambda(t)$. 

Step 1. Given the initial data  $\theta(0)$ and $b(0)$. Calculate volume $V$ from \eqref{vol2d}.

Step 2. Calculate $u(x,t)$ and $b(t)$. From the spherical cap formula \eqref{vol2d}
\begin{equation}
b(t) =\sin\theta(t) \sqrt{\frac{2V}{2\theta(t)-\sin(2\theta(t))}},
\end{equation}
and from $R(t)=\frac{b(t)}{\sin\theta(t)}=\sqrt{\frac{2V}{2\theta(t)-\sin(2\theta(t))}}, \,R(t)-u_m(t)= R \cos \theta(t)$ we have
\begin{equation}\label{cap_u}
u(x,t) = - R(t) \cos\theta(t) + \sqrt{R(t)^2-x^2}, \quad x\in(-b(t), b(t)).
\end{equation}

This construction automatically preserves the volume $V$ and by elementary calculations, we know the following relations
\begin{equation}\label{rrr}
u_x = \frac{-x}{\sqrt{R(t)^2 - x^2}},\quad \sqrt{1+(u_x)^2} = \frac{R(t)}{\sqrt{R(t)^2-x^2}}, \quad \frac{u_x}{\sqrt{1+u_x^2}} = \frac{-x}{R(t)},\quad \frac{R'}{R} = -\frac{b^2}{V} \theta'.
\end{equation}

Step 3. We find $\kappa(t), \sigma(t)$ and $\lambda(t)$ such that \eqref{breathing0} holds.
From the \eqref{rrr},  upto some elementary calculations, we have
\begin{equation}\label{dkappa}
\begin{aligned}
\kappa(t) = -\beta\theta'(t)\left(\frac{b(t)^2}{V}\cos\theta(t)+\sin \theta(t)\right),\\
\lambda(t)=\beta b(t) \theta' \left(-\frac{b(t)^2}{V}\sin\theta(t) + \cos\theta(t)\right)+ \frac{\sin\theta(t)}{b(t)},\\
\sigma(t) =b(t) \theta'(t)(\frac{b(t)^2}{V}-\cot\theta(t))- \cos\theta(t).
\end{aligned}
\end{equation}
Particularly, for the quasi-static case $\beta=0$, we have
\begin{equation}
\kappa(t)=0,\quad  \lambda(t) = \frac{\sin\theta(t)}{b(t)}=\frac{1}{R(t)}.
\end{equation}
The constructed breathing droplet solution can be easily extended to 3D case using \eqref{re-b-3}.

 Let the oscillating contact angle be $\theta(t)=\theta_{in}+ A \sin t$, with $\theta_{in}=\frac{3\pi}{16},\, A=0.2$. Now we show the evolution  of breathing droplet and the periodic recurrence for $[0,30\pi]$. The dynamics of the breathing droplet in Fig. \ref{fig:breath} is computed by  the first order scheme in Section \ref{sec-1st-scheme} with $\kappa(t)$, $\sigma(t)$ in \eqref{dkappa} and with initial wetting domain $[-3,3]$ and initial profile calculated by \eqref{cap_u} for $t=0$. The parameters in the PDE solver are $\beta=0.1$, final time $T=30\pi$, time step $\Delta t = \frac{T}{1500}=0.0628,$  $N=1000$ for moving grids in $(a(t), b(t))$. {\blue The exact solution at $T=29\pi$ based on \eqref{cap_u} is also shown with dotted orange line in the lower left part of Fig. \ref{fig:breath} as a comparison.}
\begin{figure}
\caption{Evolution  of the breathing droplet and its periodic recurrence for $[0,30\pi]$.  Computed by the first order scheme in Section \ref{sec-1st-scheme} with $\kappa(t)$ and $\sigma(t)$ in \eqref{dkappa} and oscillating contact angle $\theta(t)=\theta_{in}+ A \sin t$, with $\theta_{in}=\frac{3\pi}{16},\, A=0.2$. The parameters in the PDE solver are $\beta=0.1$, $T=30\pi$, $\Delta t =0.0628,$  $N=1000$ for moving grids, initial domain $[-3,3]$ and initial $u(x,0)$ calculated by \eqref{cap_u}. Each subfigure shows the breathing droplet at time snapshots $[0, \frac{\pi}{2}, \pi, \frac{3\pi}{2}]$ and the recurrence after $15$ periods. Exact solution at $29\pi$ is shown using dotted orange line as comparison. }\label{fig:breath}
\includegraphics[scale=0.57]
{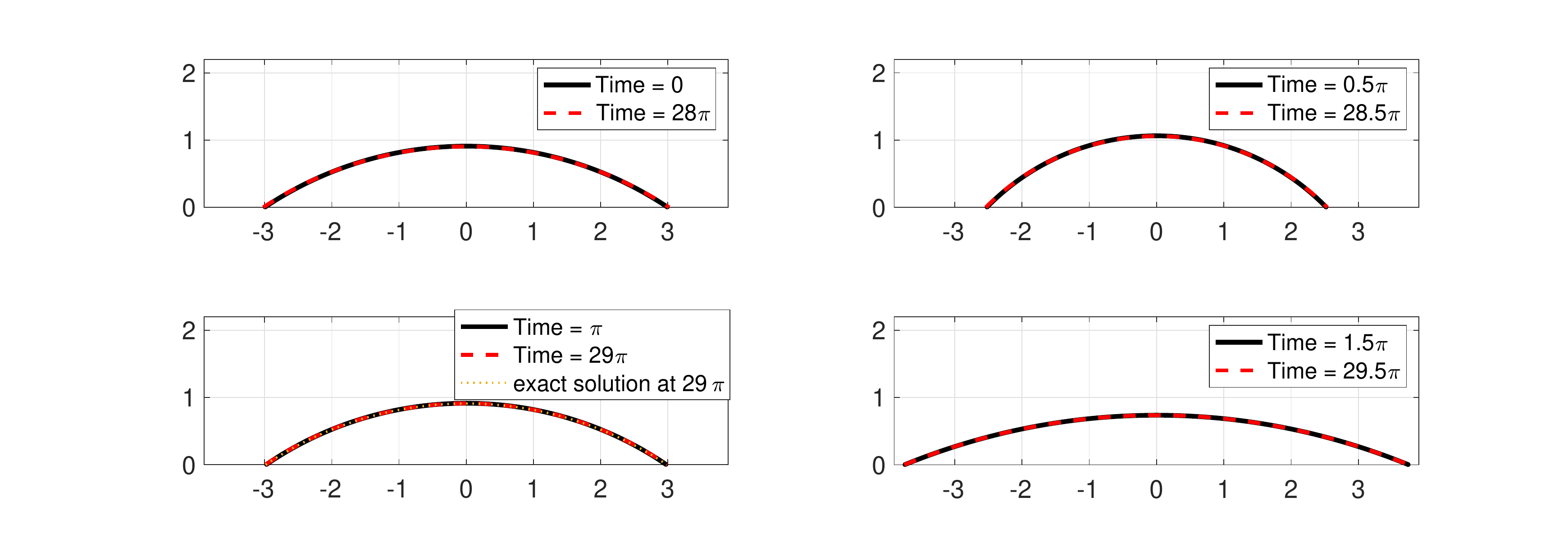} 
\end{figure}


\subsection{Capillary motion of a droplet in a Utah teapot}
The Utah teapot is an important object in computer graphics history, whose 2D cross section can be completely described by several cubic B\'ezier curves \cite{bohm1984survey}. In this section, we will use the bottom and the mouth of the Utah teapot as the inclined  substrate to demonstrate the competition between the gravitational effect and capillary effect for droplets with small Bond number. 
 
We use four points $(x_i, y_i)$, $i=1, \cdots, 4$ to construct a cubic B\'ezier curve $(x(\ell), y(\ell))$ with parameter $\ell\in[0,1]$. Denote the Bernstein basis polynomials as 
\begin{equation}
\begin{aligned}
&B_1(\ell)=(1-\ell)^3,\quad B_2(\ell)= 3(1-\ell)^2\ell, \quad B_3(\ell)= 3(1-\ell)\ell^2, \quad B_4(\ell)= \ell^3.
\end{aligned}
\end{equation}
Then the cubic B\'ezier curve is uniquely given by
\begin{equation}\label{para}
x(\ell) = \sum_{i=1}^4 B_i(\ell) x_i,\quad y(\ell) = \sum_{i=1}^4 B_i(\ell) y_i.
\end{equation}
 Now we construct the bottom and the mouth of the Utah teapot using $10$ points $x_i, y_i$, $i=1, \cdots, 10$ listed in Table \ref{table2}. 
\begin{table}
\caption{Ten points used in B\'ezier curve fitting of geometry of the Utah teapot}\label{table2}
\begin{tabular}{|c|c|c|c|c|c|c|c|c|c|c|}
\hline 
$i$ & 1 & 2 & 3 & 4 & 5 & 6 & 7 & 8 & 9 & 10 \\ 
\hline 
$x_i$ & -2 & $-\frac43$ & $-\frac23$ & 0 & $\frac23$ & $\frac43$ & 2 & $2.655$ & 2.846 & 4 \\ 
\hline 
$y_i$ & 0.78 & 0 & 0 & 0 & 0 & 0 & 0.78 & $1.142$ & 2.146 & 2.5 \\ 
\hline 
\end{tabular} 
\end{table}
For the bottom of the teapot, we use $(x_i, y_i)$ for $i=1, \cdots, 4$ and  $(x_i, y_i)$ for $i=4, \cdots, 7$. For the mouth of the teapot, we use $(x_i, y_i)$ for $i=7, \cdots, 10$. Notice the inclined rough substrate is now expressed by parametric curve  \eqref{para}. Let $\ell(x)$ be the inverse function of $x(\ell)$, then  $w(x)=y(\ell(x))$ and $\theta_0=0$ in \eqref{wet-eq-r}, {\blue which means we do not rotate the Cartesian coordinate system.} To evaluate function $w$ at endpoint $a$ in the numerical implementation, one can use linear interpolation $a=(1-\alpha)x(\ell_i)+ \alpha x(\ell_{i+1})$ for some $\alpha\in[0,1].$

Now we take   the physical parameters as $\kappa=5, \, \beta=3$ and the initial droplet as 
\begin{equation}\label{h0_tea}
h(x,0)=5.2(x-a(0))(b(0)-x)+w(a(0))+\frac{[w(b(0))-w(a(0))](x-a(0))}{b(0)-a(0)}
\end{equation}
 with initial endpoints $a(0)=2.4, b(0)=2.9$. The corresponding effective Bond number can be calculated by \eqref{bo_in} with an approximated effective inclined angle $0.226\pi$, $\Bo=0.1312$.  In the second order scheme, we use $N=600$ moving grid points distributed uniformly in $(a(t), b(t))$.  Different capillary motions corresponding to the relative adhesion coefficients  $\sigma=-0.8$ and   $\sigma=-0.6$ are shown in the upper and lower part of Fig. \ref{fig_teapot}, respectively. With time step $\Delta t = 0.002$, we take final time as $T=16$  for the rolling down case ($\sigma=-0.8$) while we take final time as $T=6$ for the rising up case ($\sigma=-0.6$). In Fig. \ref{fig_teapot}, the  dotted blue line is the initial droplet, red lines are the evolution of the droplet at equal time intervals, and the solid blue line is the final droplet at $T.$
 
   To see clearly the instantaneous contact angle dynamics, we also track the contact angle at $a(t)$ and $b(t)$ for both the rolling down case (left subfigure) and the rising up case (right subfigure)  in Fig. \ref{fig_teapot_angle_f} associated with droplets dynamics in the teapot. Those contact angles vary as the effective slope of the mouth of the teapot  changes, and then tend to the equilibrium Young's angle $\theta_Y$. Particularly, two cusps occur for the rolling down case (left subfigure) in Fig. \ref{fig_teapot_angle_f}  when the advancing (resp. receding) contact line $a(t)$ (resp. $b(t)$) pass  through the corner between the mouth and the body of the Utah teapot. The red line is the dynamics of the contact angle $\theta_a$ at $a(t)$ while the blue line is the dynamics of the contact angle $\theta_b$ at $b(t)$. One can see at the late stage, the sign of both  $a'(t)=\frac{1}{\cos \theta_{0a}}( \cos \theta_a-\cos \theta_Y)$ and $b'(t)=-\frac{1}{\cos \theta_{0b}}( \cos \theta_b-\cos \theta_Y)$ are negative (resp. positive) for the rolling down case (resp. rising up case).

\begin{figure}
\includegraphics[scale=0.38]{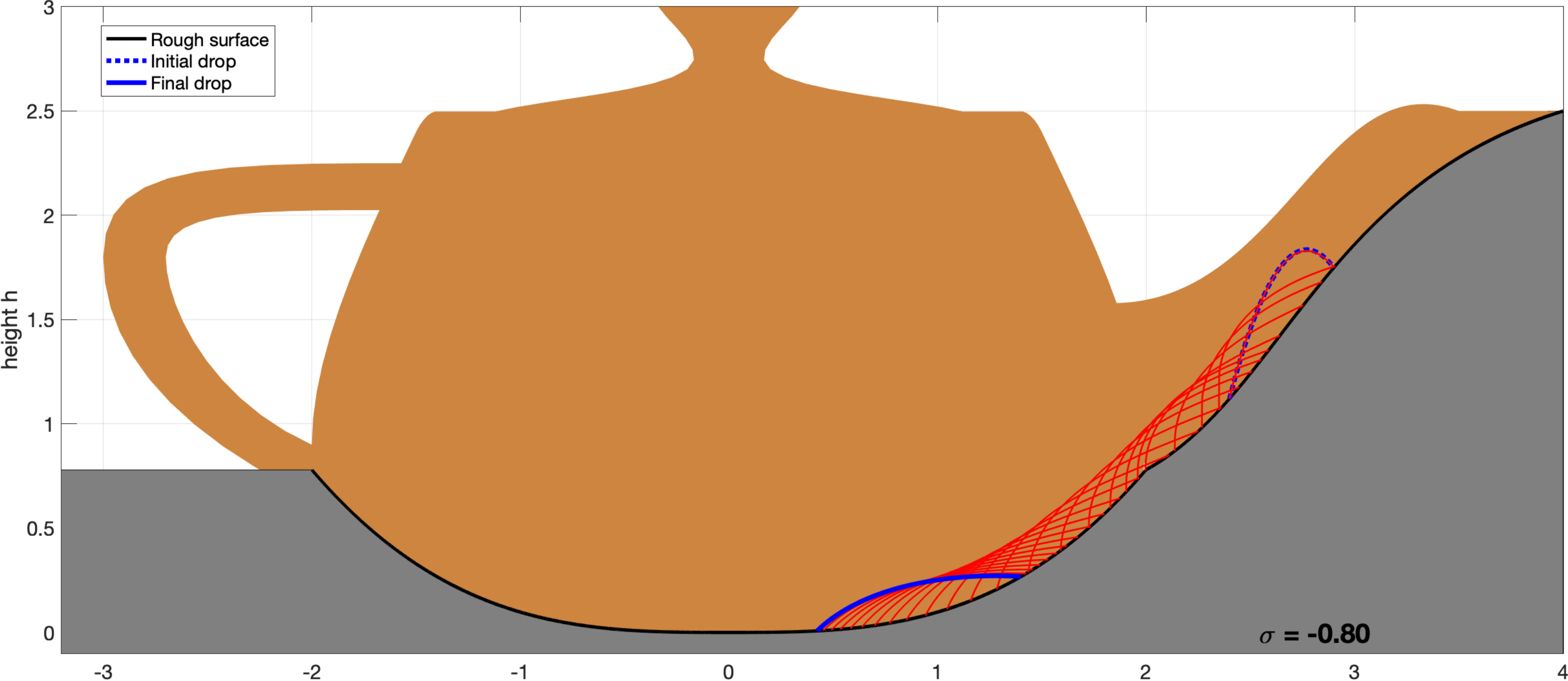}
 \includegraphics[scale=0.38]{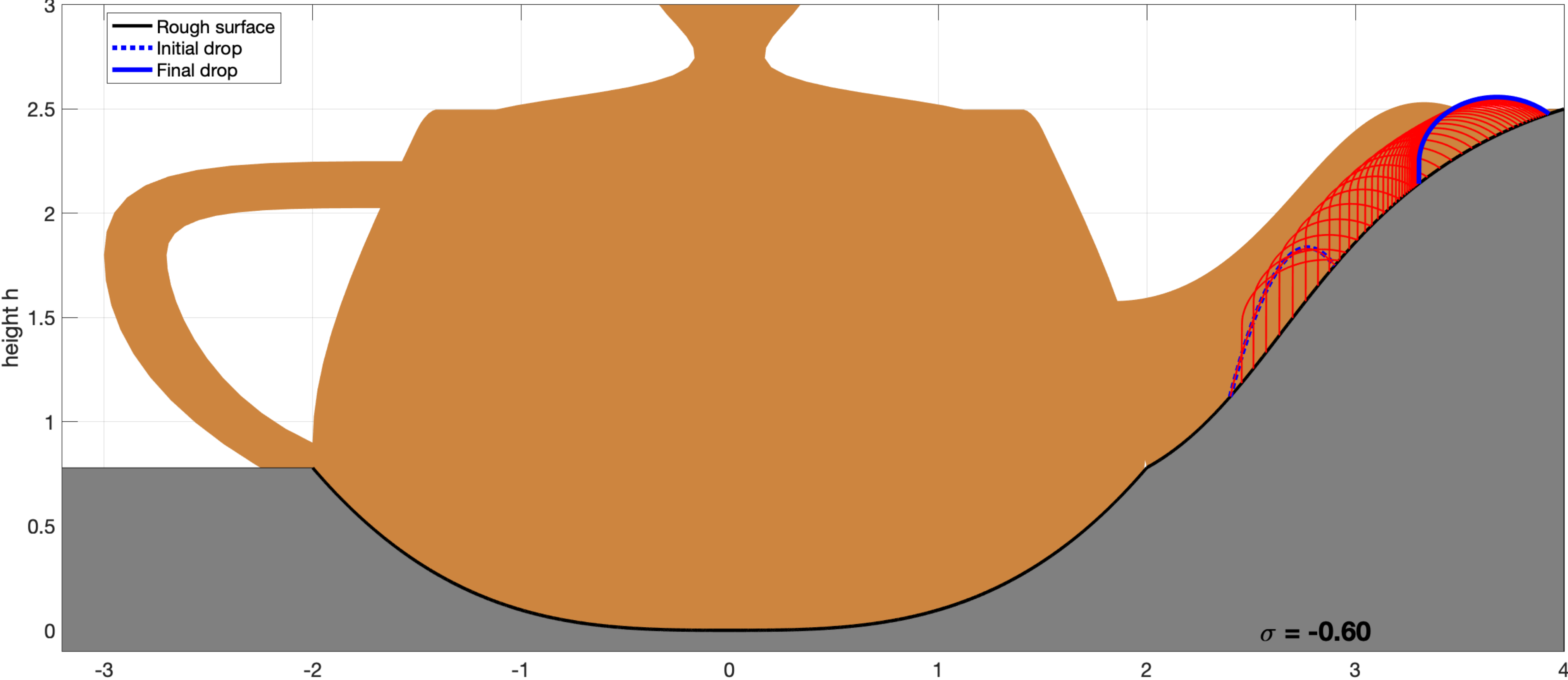} 
 \caption{Evolution of a partially wetting droplet in the Utah teapot at equal time intervals using the second order time-space scheme in Section \ref{sec_2nd_scheme}. Parameters: number of moving grids  $N=600$, time step $\Delta t = 0.002$, $\kappa=5, \, \beta=3$, Bond number $\Bo=0.1312$, initial drop profile given in \eqref{h0_tea} with $a(0)=2.4, b(0)=2.9$.  (upper) Gravity wins: relative adhesion coefficient $\sigma=-0.8$ and  final time $T=16$; (lower) capillary rise: relative adhesion coefficient $\sigma=-0.6$ and final time $T=6$.}\label{fig_teapot}
\end{figure}

\begin{figure}
\includegraphics[width=8.1cm]{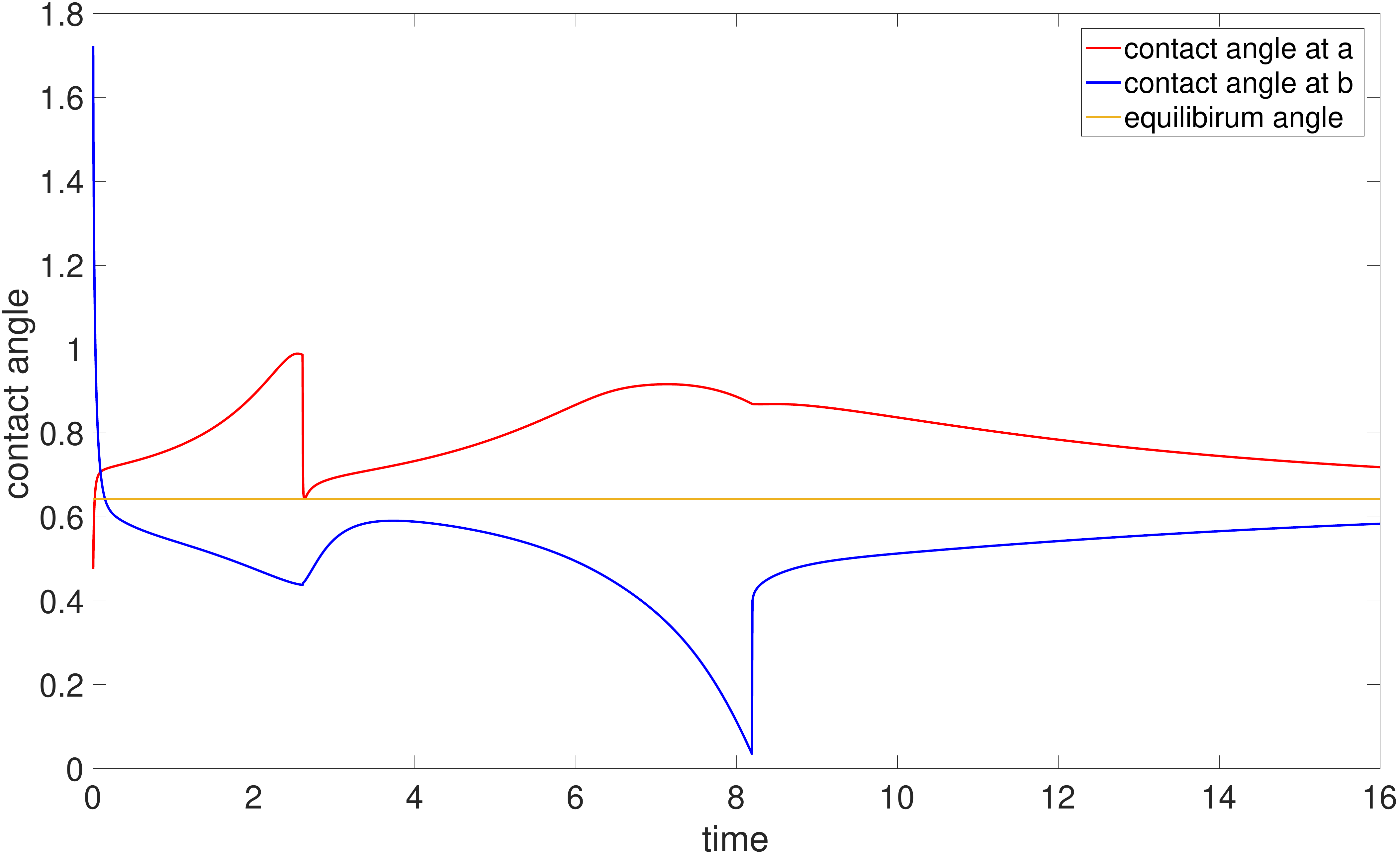} \includegraphics[width=8.1cm]{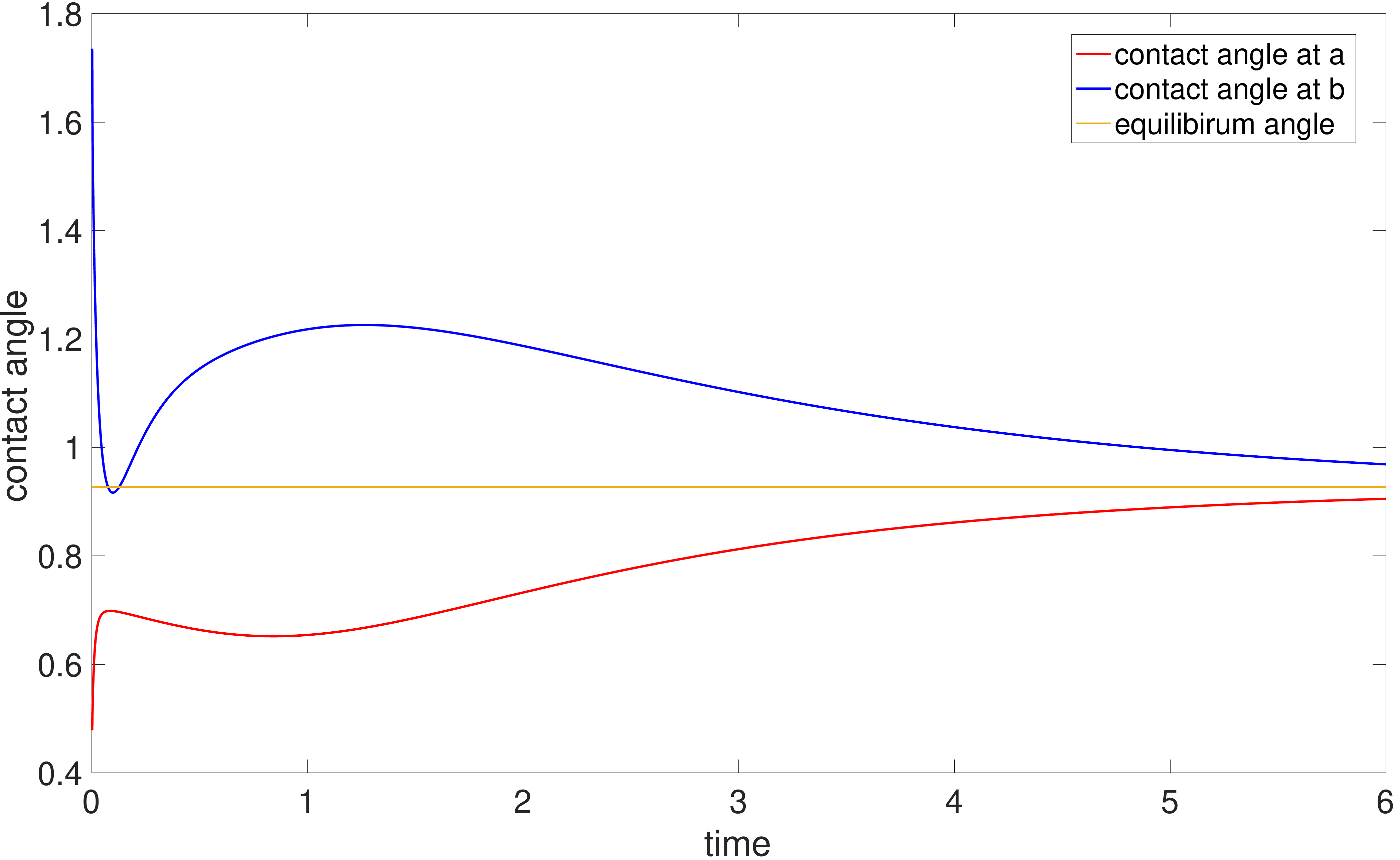} 
\caption{The contact angle dynamics for the associated  droplets dynamics in Fig \ref{fig_teapot}. (Left) Evolution of contact angles for $t\in[0,16]$ for the rolling down case with $\sigma=-0.8$;   two cusps when the advancing (resp. receding) contact line $a(t)$ (resp. $b(t)$) passing through the corner between the mouth and the body of the teapot. (Right) Evolution of contact angles for $t\in[0,6]$ for the rising up case with $\sigma=-0.6$.}\label{fig_teapot_angle_f}
\end{figure}

\subsection{Dynamics of droplets on an inclined  groove-textured surface}
In this section, we show the contact angle hysteresis (CAH) for a droplet on an  inclined rough surface. Gravity will pull the droplet down while CAH will resist its motion. Therefore, one will observe the top of the droplet becomes thin while the bottom of it becomes thick. Besides, the contact line speeds depend  on both the instantaneous contact angle $\theta_a, \theta_b$ and the local slope of the rough surface $\theta_{0a}, \theta_{0b}$ (in \eqref{wet-eq-r}), which changes constantly due to the boundary motion. Consequently, one can observe the contact line speed will change accordingly.

To demonstrate those phenomena, we take the inclined angle $\theta_0=0.3$ and a typical groove-textured surface 
\begin{align}
 w(x)=A(\sin(kx)+\sin(kx/2)+\cos(2kx)),\, A=0.1,\, k=5.\label{w2_r}
\end{align}
We take the physical parameters as $\kappa=0.3, \, \beta=0.3$, relative adhesion coefficient $\sigma=-0.95$ and initial droplet as 
\begin{equation}\label{h0_r1}
h(x,0)=0.08(x-a(0))(b(0)-x)(x^2+3x/2+1)+w(a(0))+\frac{[w(b(0))-w(a(0))](x-a(0))}{b(0)-a(0)}
\end{equation}
 with initial endpoints $a(0)=-3,\, b(0)=3$. The corresponding effective Bond number can be calculated by \eqref{bo_in} with $\theta_0=0.3$. The evolution of the partially wetting droplet is computed by the second order scheme in Section \ref{sec_2nd_scheme}.  We take final time as $T=96$ with time step $\Delta t = 0.08$ and use $N=1000$ moving grids uniformly in $(a(t), b(t))$. We show in Fig. \ref{fig_r1r2} droplet on  rough surface $w(x)$ in \eqref{w2_r}. The green line is the initial droplet, red lines are the evolution of the droplet at equal time intervals, and the blue line is the final droplet at $T=96.$
\begin{figure}
 \includegraphics[scale=0.33]{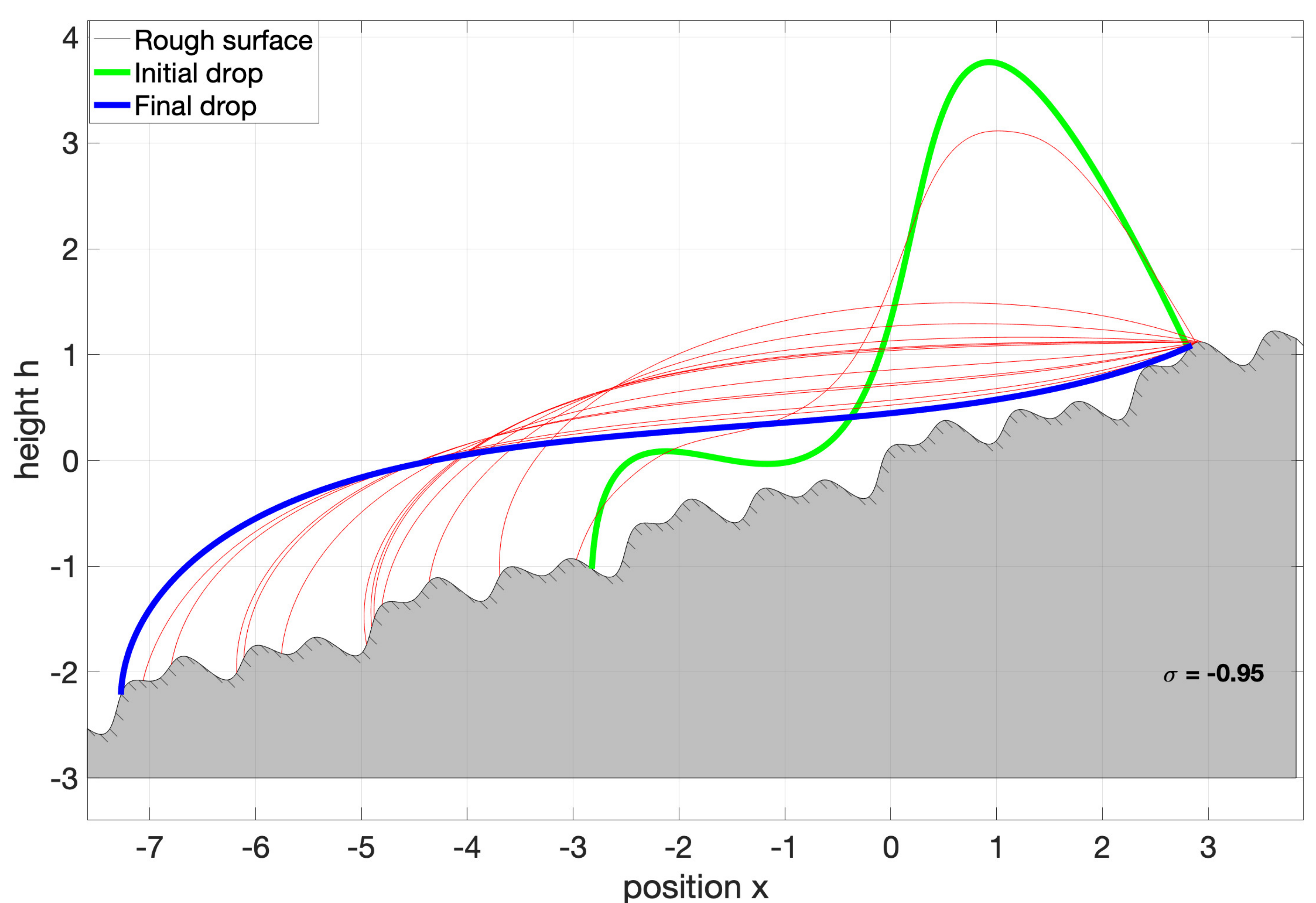}  
\caption{Evolution of a partially wetting droplet on an inclined rough surface at equal time intervals using the second order time-space scheme in Section \ref{sec_2nd_scheme}. Parameters: $\kappa=0.3, \, \beta=0.3$, number of moving grids  $N=1000$, time step $\Delta t = 0.08$,  initial drop profile $h(x,t)$ in \eqref{h0_r1} with initial endpoints $a(0)=-3, b(0)=3$,  final time $T=96$, relative adhesion coefficient $\sigma=-0.95$, inclined effective angle $\theta_0=0.3$.  Contact angle hysteresis happens on rough surface \eqref{w2_r}. }\label{fig_r1r2}
\end{figure}

\subsection{Quasi-static dynamics of Kelvin pendant drop with volume constraint }\label{sec4.4}
In 1886, Lord Kelvin proposed a geometric integration procedure that the quasistatic profile remains no long graph representation and becomes ``repeated bulges"  when the height of the pendant drop exceeds a critical height $u_c$. In this section, we compute the Kelvin pendant drop problem with volume constraint for $\kappa<0$, which is not covered in Proposition \ref{pertur_pf0}. For the cases without volume constraint, we refer to \cite{ pozrikidis_stability_2012}; see also \cite{Finn_Shinbrot_1988, Schulkes_1994} and references therein.  To simulate the ``repeated bulges", which certainly break the vertical single graph representation setting, we will first describe the droplet using inverse function $X(u)$ (in horizontal graph setting) and give the gradient flow formulation in terms of $X(u)$. By solving the DAEs for $X(u)$ with $\kappa<0$, which describes the quasi-static dynamics of a pendant droplet, we will recover multiple interfacial shapes including lightbulb and hourglass shapes with different Bond numbers.  We refer to \cite{pozrikidis_stability_2012} for simulations and stability analysis of a liquid drop in hydrostatic states.

For the case the capillary surface can not be expressed uniquely by the graph function $u(x)$, we use a horizontal graph setting  $X(u)$.  {\blue
From \cite[Theorem 1.1]{wente1980symmetry},   for the quasi-static dynamics of a pendant droplet, there is  only one vertical axis of symmetry such that any nonempty intersection of $u$ with a horizontal  hyperplane are disks centered at that vertical axis.  Thus the maximum $u_m$ of the droplet is attained uniquely at $X(u_m)=0$ and there exists a unique horizontal graph representation $X(u)$, $0\leq u \leq u_m$.
  For simplicity,  we also  assume the full dynamics of a pendant droplet has a horizontal graph representation in the derivation of \eqref{dewet-eq-n}.}   Now we identify the droplet on the right of $x=0$ as
\begin{equation}\label{inverse}
A:=\{(u,x);~0\leq  u\leq u_m, 0\leq x \leq X(u) \}.
\end{equation}

Next we give the following governing equations  for a 2D droplet in terms of $X(u)$
\begin{equation}\label{dewet-eq-n}
\begin{aligned}
\beta\frac{\pt_t X }{\sqrt{1+X_u^2}}= \pt_u\left( \frac{X_u}{\sqrt{1+X_u^2}} \right) - \kappa u + \lambda,\quad 0\leq u \leq u_m\\
X(u_m)=0,\quad X_u(u_m)=-\8,\\
\pt_t X(0, t)= \frac{X_u}{\sqrt{1+ X_u^2}} \Big|_{u=0} - \sigma,\\
\int_0^{u_m} X(u) \ud u=V/2.
\end{aligned}
\end{equation}
The derivation  using a gradient flow on a Hilbert manifold is similar to \eqref{wet-phy0}; see Appendix \ref{app_nonwet}.

To compute the Kelvin pendant droplet problem with volume constraint, we consider the quasi-static dynamics by taking $\beta=0$ in \eqref{dewet-eq-n}. After desingularization, the quasi-static dynamics can be recast as the following DAEs
 for $(X(0,t), u_m(t), \theta(t), \lambda(t))$
\begin{equation}\label{DAE_nonwet}
\begin{aligned}
\pt_t X(0, \cdot)=-\cos \theta-\sigma,\\
J(u_m, \theta) =1,\\
X(0, \cdot)  =  \frac{V}{2u_m}+\frac{1}{u_m}\int_0 ^{u_m} (u-u_m) \frac{-J(u, \theta)}{\sqrt{1-J(u,\theta)^2}} \ud u,\\
\frac{V}{2} 
=  \int_0^{\sqrt{u_m}} \frac{-2u J(u,\theta)}{\sqrt{1+J(u,\theta)}}\frac{\sqrt{u_m-u}}{\sqrt{1-J(u,\theta)}} \ud \psi,
\end{aligned}
\end{equation}
where $J(u, \theta)$ defined in \eqref{def_J}. {\blue The third desingularized formula comes from \eqref{desin00} while
the last desingularized formula comes from  the volume formula in \eqref{rel_n}. Using $\psi^2=u_m -u$, we rewrite $V$ as
\begin{equation}\label{D9}
\begin{aligned}
\frac{V}{2} =  \int_0^{\sqrt{u_m}} \frac{-2u J(u,\theta)}{\sqrt{1+J(u,\theta)}}\frac{\sqrt{u_m-u}}{\sqrt{1-J(u,\theta)}} \ud \psi 
\end{aligned}.
\end{equation}
By L'Hopital's law, $\lim_{u\to u_m} \frac{u_m-u}{1-J(u)}= \lim_{u\to u_m} \frac{-1}{\kappa u-\lambda}\neq 0$ provided $\kappa u_m \neq \lambda$. 
}

After solving  $(X(0,t), u_m(t), \theta(t), \lambda(t))$ from the above DAEs, we can further compute the formula for $X(u,t)$ 
\begin{align}\label{tm_xu}
X(u, \cdot) = \int_u^{u_m} \frac{J(u,\theta)}{\sqrt{1-J(u)^2}} \ud u.
\end{align}

{\blue We  use the DAEs solver \textit{ode15s} in Matlab to solve the DAEs \eqref{DAE_nonwet}  
with  different initial data  $\theta_{in},\, u_m(0)$ and physical parameters $\kappa$, Young's angle $\theta_Y$; see Table \ref{Tpendant}. We start the DAEs by first solving the compatible initial data $b(0)=X(0,0)$ and $\lambda(0)$ from \eqref{DAE_nonwet}. At the final time $T=4$, the final bulge  shape of the pendant droplet (blue line)    is illustrated in (left) Fig. \ref{K_wet}, while the  final lightbulb shape of the pendant droplet is  illustrated in (right) Fig. \ref{K_wet}. The corresponding Bond numbers and the final contact points $b(4)$ are also shown in Table \ref{Tpendant}.}

\begin{table}
\caption{List of the parameters in DAEs solver for pendant droplets}\label{Tpendant}
\begin{tabular}{|c|c|c|c|c|c|c|c|}
\hline 
\, & $\theta_{in}$ & $u_m(0)$ & $\theta_Y$ & $b(0)$ & $\kappa$ & $\Bo$ & $b(4)$ \\ 
\hline 
Bulge shape & $ \frac{3\pi}{16}$ & 0.3 & $\frac{2.7\pi}{8}$ & $0.37045$ & $-28.028$ & 1.213 &  0.17438\\ 
\hline 
Lightbulb shape & $\frac{5\pi}{16}$ & 0.3 & $\frac{4.7\pi}{8}$ & $0.36172$ & $-15.05$ & 0.708 &  0.05020 \\ 
\hline 
\end{tabular} 
\end{table}

\begin{figure}

\caption{Quasi-static dynamics of Kelvin pendant droplets with volume constraint. The evolution of pendant droplets at equal time intervals are computed using DAEs \eqref{DAE_nonwet} with initial data $u_m(0)=0.3$ and final time $T=4$. The left figure has an initial angle $\theta_{in} = \frac{3\pi}{16}$, final Young's angle $\theta_f=\frac{2.7\pi}{8}$ and the physical parameters $\kappa=-28.028, \Bo=1.213$.  The right figure has an initial angle $\theta_{in} = \frac{5\pi}{16}$, final Young's angle $\theta_f=\frac{4.7\pi}{8}$ and the physical parameters $\kappa=-15.0, \Bo=0.708$. }\label{K_wet}
 \includegraphics[scale=0.8]{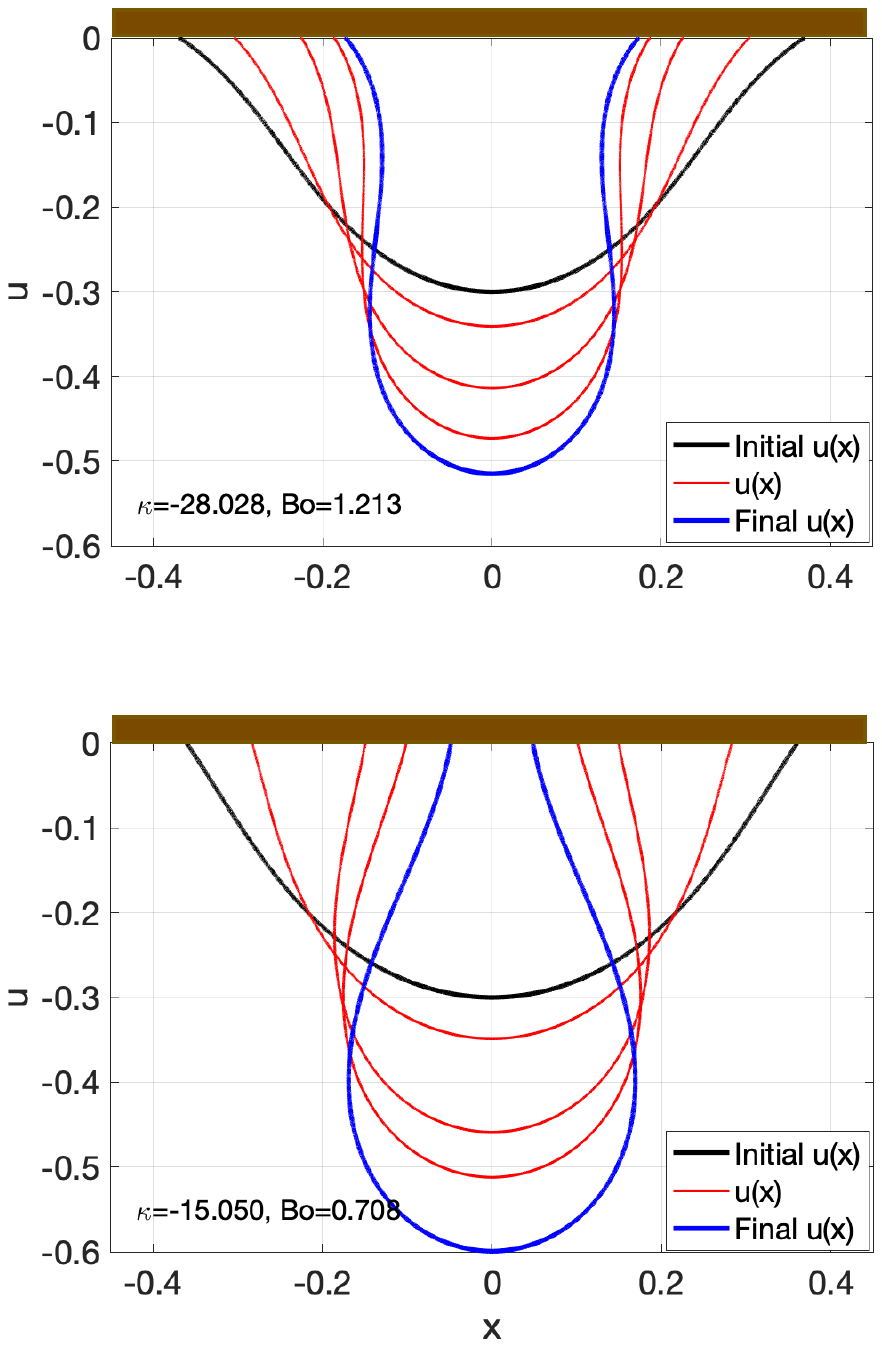} \includegraphics[scale=0.8]{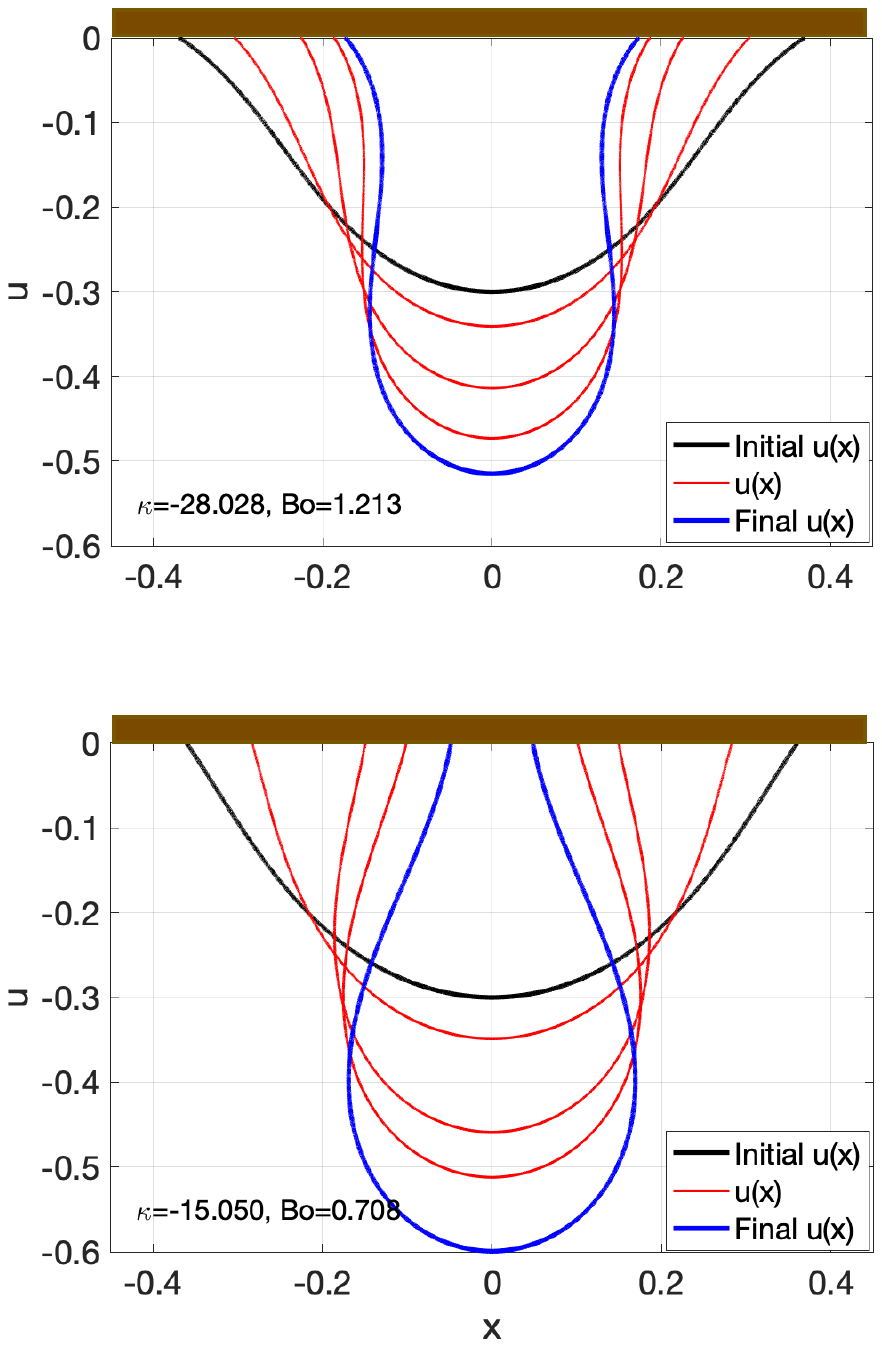} 
\end{figure}

\section*{Acknowledge}
The authors wish to thank Professor Tom Witelski for  valuable suggestions and  particularly thank anonymous  reviewers for insightful criticism and comments, which significantly improved this manuscript. J.-G. Liu was supported in part by the National Science Foundation (NSF) under award DMS-1812573.

\bibliographystyle{abbrv}
\bibliography{dropbib}

\appendix

\section{Dynamics of a droplet as a gradient flow on manifold}\label{app_model}
We use a Hilbert manifold \cite{Klingenberg_Wilhelm} to describe the configuration states
\begin{equation}\label{mm-n}
\mm:= \{(\Gamma, u);\, \Gamma:=\pt D \in C^1, \, u\in H_0^1(D),~u\geq 0 \text{ on }D\}
\end{equation}
and use a trajectory on this manifold to describe the dynamics of the droplet.
Consider a trajectory $\pcon(t) \in \mm$ starting from initial state 
$\pcon(0)=\{\Gamma(0), u(x,y,0)\}\in\mm$,
\begin{equation}
\pcon(t)=\{\Gamma(t), u(x,y,t)\}\in \mm, \quad t\in[0,T]. 
\end{equation}
It is natural to assume the motion of the droplet $\pcon(t)$ is modeled by  a gradient flow on  manifold $\mm$ described above. (i) The dynamics is driven by the  free energy $\F(\pcon)$ on manifold $\mm$; (ii) The dissipation mechanism of the dynamics is described by a Riemannian metric $g_\eta$ on the tangent plane $T_\eta\mm$, which is discussed in \eqref{metric-n} below.

We will use the vertical velocity $v=\pt_t u$ and the contact line velocity $\Gamma_t=v_{cl}n_{cl}$ to describe the tangent plane $T_{\pcon}\mm$.
Since the geometric motion has an obstacle condition $u\geq 0$,  manifold $\mm$ has a boundary, i.e., $\{\eta\in\mm; u(x,y)=0 \text{ for some }(x,y)\in D\}$ (when the droplet has a splitting-type topological change). On the boundary, the tangent plane is not a linear space and has the restriction described below. 
Notice 
\begin{equation}\label{com-angle}
\begin{aligned}
\frac{\ud u(\Gamma(t),t)}{\ud t} = \pt_t u(\Gamma(t),t) + \nabla  u(\Gamma(t),t)\cdot {\Gamma_t}
=&\pt_t u(\Gamma(t),t) +  (\nabla  u(\Gamma(t),t)\cdot n_{cl}) v_{cl} \\
=&\pt_t u(\Gamma(t),t) -|\nabla  u(\Gamma(t),t)| v_{cl}=0,
\end{aligned}
\end{equation}
where we used the fact that $\nabla  u(\Gamma(t),t)\cdot n_{cl}=-|\nabla  u(\Gamma(t),t)|$ in the graph representation.
The tangent plane at $\eta$ is given by
\begin{equation}\label{tm-n}
T_{\pcon} \mm:=\{(v_{cl}, v); ~v - |\nabla  u| v_{cl} =0 \text{ on } \Gamma,~~~v+u\geq 0 \text{ in } D\}.
\end{equation}
The last inequality in the definition of $T_\eta\mm$ in \eqref{tm-n} is only effective for $\eta$ on the boundary of the manifold $\mm$ where the obstacle occurs.  
Define the contact angles (inside the droplet $A$) as 
\begin{equation}\label{con-ang-n}
\tan \theta_{cl} := | \nabla  u(\Gamma)|;
\end{equation}
see Fig \ref{fig:ill} (a).
Then the physical meaning of the constraint in \eqref{tm-n} comes  naturally from the fact that the contact angles are proportional to the  quotient of the vertical velocity and the horizontal velocity,  i.e.,
\begin{equation}
\tan \theta_{cl} =| \nabla  u(\Gamma)|= \frac{\pt_t u|_\Gamma}{v_{cl}}.
\end{equation}
Notice the outer normal $n=\frac{1}{\sqrt{1+|\nabla u|^2}}(-\nabla u, 1)$ on the capillary surface and due to $\frac{-\nabla u}{|\nabla u|}\Big|_\Gamma =  n_{cl}$, $n\big|_\Gamma =\frac{1}{\sqrt{1+|\nabla u|^2}}(|\nabla u|n_{cl}, 1)$. Here the values on $\Gamma$ are understood as one-side limit from the interior of $D$.
Using the contact angle $\theta_{cl}$, we have
\begin{equation}
 n |_\Gamma = (\sin \theta_{cl} n_{cl}, \cos \theta_{cl}), \quad v_n |_\Gamma= \sin \theta_{cl} v_{cl}.
\end{equation}

Now we describe the dissipation mechanism of the dynamics.
From Rayleigh's dissipation function \eqref{RQ}, since $2Q$ is the rate of energy dissipation due to friction \cite{goldstein2002classical}, it is natural to 
introduce the Riemannian metric $g_{\eta}$ on $T_{\eta}\mm\times T_{\eta}\mm$ below. For any $q_1=(v_{cl1}, v_1),\, q_2=(v_{cl2},v_2)\in T_{\pcon}\mm$,
\begin{equation}\label{metric-n}
g_{\pcon} (q_1, q_2):= \mathcal{R}\int_{\Gamma}v_{cl1} v_{cl2} \ud s+ \zeta\int_{D} v_1   v_2 \frac{\ud x \ud y}{\sqrt{1+ |\nabla u|^2}}.
\end{equation}
For similar derivations of the dynamics of droplets using a variational approach with various free energies and the same Riemannian metrics \eqref{metric-n}, we also refer to  \textsc{Davis} \cite{Davis_1980},  \cite{Qian_Wang_Sheng_2006},  \textsc{Doi} \cite{xu2016variational} and \cite{Grunewald_Kim_2011}.

  We now derive the gradient flow of $\F(\pcon)$ defined in \eqref{energy} on manifold $\mm$ with respect to the Riemannian metric $g_{\pcon}$.
For an arbitrary trajectory $\tilde{\pcon}(s)=\{\tilde{\Gamma}(s), \tilde{u}(x,y,s)\}$ (physically known as a virtual displacement) passing $\tilde{\pcon}(t)=\eta(t)$ at the tangent direction
$q:= \tilde{\pcon}'(t)= \{\tilde{v}_{cl}, \tilde{v}\}\in T_{\pcon(t)} \mm$,
we know
\begin{equation}\label{com-con-n}
\tilde{v}(\Gamma)=  |\nabla  u(\Gamma(t),t)| \tilde{v}_{cl}.
\end{equation}
To ensure the volume preserving condition $\int_{D(t)} u \ud x\ud y = V, \, t\in[0,T]$, we consider the gradient flow of extended free energy $\F(\eta,\lambda)$ on manifold $\mm\times \mathbb{R}$ for $\eta(t)\in \mm$ and a Lagrangian multiplier $\lambda(t)$
\begin{equation}
\F(\eta(t), \lambda(t))= \F(\eta(t))- \lambda(t)(\int_{D(t)} u(t) \ud x \ud y -V).
\end{equation}
Then the gradient flow of $\F(\eta,\lambda)$ with respect to Riemannian metric $g_\eta$ defined in \eqref{metric-n}
is
\begin{equation}
-g_{\pcon(t)}(\tilde{\eta}'(t), \eta'(t)) \leq  \frac{\ud }{\ud s}\big|_{s=t^+} \F(\tilde{\pcon}(s), \tilde{\lambda}(s)) =  \frac{\ud }{\ud s}\big|_{s=t^+} \F(\tilde{\pcon}(s)) - \la \tilde{v}, \lambda(t)\ra - \tilde{\lambda}'(t)(\int_{D(t)} u \ud x \ud y -V).
\end{equation} 
for any $\tilde{\eta}'\in T_{\pcon(t)} \mm$, {where $\la \tilde{v}, \lambda(t)\ra = \lambda(t)\int_{D(t)}\tilde{v} \ud x \ud y$ is the inner product of $L^2(\mathbb{R})$.}
For a generic free energy density $G(u,\nabla u)$, we calculate the first variation  
$ \frac{\ud }{\ud s}\big|_{s=t^+} \int_{\tilde{D}(t)} G( \tilde{u}(x,y,s), \nabla \tilde{u}(x,y,s)) \ud x \ud y $ below.
It includes three typical free energy examples: (i) Dirichlet energy $G(u,\nabla u)=\frac12|\nabla u|^2+\sigma$, where $\sigma$ is a constant; c.f. \cite{caffarelli2006homogenization, Kim_Mellet_2014, feldman_dynamic_2014, xu2016variational};
(ii) Area functional $G(u,\nabla u)=\sqrt{1+ |\nabla u|^2}+\sigma$;  c.f. \cite{Caffarelli_Mellet_2007, Caffarelli_Mellet_2007a, Feldman_Kim_2018}, as a consequence, the choice of the last term in $g_\eta$ gives the mean curvature flow; c.f. \cite{Giga_2006}; (iii) free energy for droplets on inclined rough surface; see \eqref{energy-r} below.

From \eqref{com-con-n} and the Reynolds transport theorem,
\begin{align}
&\frac{\ud }{\ud s}\big|_{s=t^+} \int_{\tilde{D}(t)} G( \tilde{u}(x,y,s), \nabla \tilde{u}(x,y,s)) \ud x \ud y \nonumber\\
=& \int_{\Gamma(t)} G|_\Gamma \tilde{v}_{cl} \ud s + \int_{D(t)} \pt_u G \tilde{v} + \pt_{ \nabla u} G \cdot \nabla \tilde{v} \ud x \ud y \nonumber\\
=&  \int_{\Gamma(t)} G|_\Gamma \tilde{v}_{cl} \ud s + \int_{D(t)} (\pt_u G - \nabla \cdot  (\pt_{ \nabla u} G) ) \tilde{v} \ud x \ud y +  \int_{\Gamma(t)}\tilde{v} (n_{cl} \cdot \pt_{\nabla u} G)\ud s \nonumber\\
=& \int_{\Gamma}[G+|\nabla  u |(n_{cl} \cdot \pt_{\nabla u} G) ]\big|_\Gamma \tilde{v}_{cl} \ud s +   \int_{D(t)} (\pt_u G - \nabla \cdot  (\pt_{ \nabla u} G) ) \tilde{v} \ud x \ud y. \label{var}
\end{align}
Notice from $\tilde{\pcon}(t)=\eta(t)$, the Riemannian metric $g_{\pcon(t)}$, 
\begin{align}
g_{\pcon} (\tilde{\eta}(t), \eta(t)):= \mathcal{R}\int_{\Gamma(t)}\tilde{v}_{cl}  v_{cl} \ud s+ \zeta\int_{D(t)}  \tilde{v}   \frac{\pt_t u}{\sqrt{1+ |\nabla u|^2}}\ud x \ud y.
\end{align}
where $\frac{\pt_t u(x,t)}{\sqrt{1+ |\nabla  u|^2}}$ is the normal velocity in the direction of the outer normal.

Hence by taking different $\tilde{\eta}'\in T_{\pcon(t)}\mm$, the governing equations 
for  $u(\cdot,t)\in H_0^1(D(t))$ and $\lambda(t)$ are
\begin{equation}\label{eq_G_nD}
\begin{aligned}
\mathcal{R}v_{cl} =  -\left[G+|\nabla  u|(n_{cl} \cdot \pt_{\nabla u} G)\right ]\big|_\Gamma&,\\
\int_{D(t)} \left[ \zeta\frac{\pt_t u}{\sqrt{1+ |\nabla u|^2}}+ (\pt_u G -  \nabla \cdot  (\pt_{ \nabla u} G) )-\lambda(t)\right]v \ud x \ud y \geq  0&,\\
 \forall v\in H_0^1(D(t)), v(x)+u(x,t) \geq& 0,\\
\int_{D(t)} u \ud x \ud y =V&
\end{aligned}
\end{equation}
with initial data $\eta(0)=\{\Gamma(0), u(x,y,0)\}$ and initial volume $V$.

The variational inequality formulas above are able to describe the merging and splitting of several drops by using  numerical schemes for  parabolic variaional inequalities (PVI), such as splitting method with projecting operators, c.f. \cite{Kato_Masuda_1978, Lions_Mercier_1979, gao2020projection}.  {\blue However, after the splitting of two droplets, the original two-phase interface becomes three-phase triple point at the splitting point, thus the purely PVI dynamics can not describe the contact line dynamics for the emerged triple point, i.e.,  new contact line $\tilde{\Gamma}$. Indeed, according to PVI, the motion of the emerged triple points $\tilde{\Gamma}$ is only guided by the projection of the motion by mean curvature on the substrate, thus  is different from the true physical case, i.e., $\mathcal{R}v_{cl} =  -\left[G+|\nabla  u|(n_{cl} \cdot \pt_{\nabla u} G)\right ]\big|_{\tilde{\Gamma}}$.    Hence we need to detect the splitting points, enforce this contact line dynamics for  new $\tilde{\Gamma}$ and then treat them separately as two droplets.} 
 Existence of global weak solutions including topological changes (splitting and merging) is studied in \cite{Grunewald_Kim_2011} using a continuum limit of a time discretization based on variational methods.

\subsection{Gradient flow for a single droplet: without merging and splitting}\label{appA.1}
 We call a single droplet as a  sessile drop if $u(x,y,t)>0$ for $(x,y)\in D(t)$ with the  gravity downwards, i.e., $g>0$. Another  scenario is when a light drop is in a heavy fluid, the drop experiences buoyancy due to gravity. In this case, we call a single droplet as a  pendant drop if $u(x,y, t)>0$ for $(x,y)\in D(t)$ with the  gravity upwards, i.e.,  $g<0$. Another equivalent problem is a drop pendant on ceiling with $u<0$ for $(x,y)\in D(t)$ and $g>0$. In this paper, we only consider nonnegative $u$ and use negative $g$ for a pendant droplet.
For those single sessile/pendant drop cases, the variational inequalities become variational equalities and the weak formulations can be equivalently converted to a strong-form PDE. 
Therefore the governing equations  with volume constraint \eqref{eq_G_nD} become
\begin{equation}
\begin{aligned}
&\zeta\frac{\pt_t u}{\sqrt{1+ |\nabla u|^2}}+ (\pt_u G -  \nabla \cdot  (\pt_{ \nabla u} G) )-\lambda(t)=0 , \quad  \text{ in } D(t)
,\label{eq1-pde-n}\\
& u(\Gamma(t), t)=0,\\
&\mathcal{R}v_{cl} = -\left[G+|\nabla  u|(n_{cl} \cdot \pt_{\nabla u} G)\right ],  \quad \text{ on }\Gamma,\\
& \int_{D(t)} u \ud x \ud y = V,
\end{aligned}
\end{equation}
where $V$ is the initial volume of the droplet.
Particularly, for the energy \eqref{energy}, we have
\begin{align*}
G=\gamma_{lg}\sqrt{1+|\nabla u|^2}+ (\gamma_{sl}-\gamma_{sg}) + \rho g \frac{u^2}{2},\qquad 
\pt_u G = \rho g u, \quad \pt_{\nabla u} G = \frac{\gamma_{lg} \nabla u}{\sqrt{1+|\nabla u|^2}}.
\end{align*}
Therefore the governing equations are \eqref{eq_nD}.
We remark that when $G(u,\nabla u)=\frac12|\nabla u|^2+\sigma$, the kinematic boundary condition for the contact line speed $v_{cl}$ in \eqref{eq_G_nD} becomes $\frac{\mathcal{R}}{\gamma_{lg}}v_{cl}=\frac12|\nabla u|^2-\sigma$, which recovers the kinematic boundary condition used in  \cite{Grunewald_Kim_2011, Kim_Mellet_2014, feldman_dynamic_2014}.

\begin{proof}[Proof of Proposition \ref{Prop2.1}]
Statement (i) comes from \eqref{com-angle} directly.

For Statement (ii), from  the Reynolds transport theorem and similar to \eqref{var}, we have
\begin{align*}
&\frac{\ud }{\ud t} \int_{{D}(t)} G(u(x,y,t), \nabla {u}(x,y,t)) \ud x \ud y\\
&= \int_{\Gamma}[G+|\nabla  u |(n_{cl} \cdot \pt_{\nabla u} G) ]\big|_\Gamma {v}_{cl} \ud s +   \int_{D(t)} (\pt_u G - \nabla \cdot  (\pt_{ \nabla u} G) ) {\pt_t u} \ud x \ud y,
\end{align*}
which together with \eqref{com-angle} and \eqref{eq1-pde-n}, gives
\begin{align*}
\frac{\ud}{\ud t}\F=&-\mathcal{R}\int_{\Gamma(t)} v_{cl}^2 \ud s - \zeta \int_{D(t)} \frac{(\pt_t u)^2}{\sqrt{1+|\nabla u|^2}} \ud x \ud y + \lambda(t) \int_{D(t)} \pt_t u \ud x \ud y\\
=& -\mathcal{R}\int_{\Gamma(t)} v_{cl}^2 \ud s - \zeta \int_{D(t)} \frac{(\pt_t u)^2}{\sqrt{1+|\nabla u|^2}} \ud x \ud y.
\end{align*}

For (iii), we derive the the gradient flow for a single droplet with quasi-static dynamics.
If we consider the gradient flow in the quasi-static setting, i.e., $\zeta=0$, 
we can regard $u(x,y, t)$ as  being driven by $\Gamma(t)$. In other words, we consider $\{\Gamma(t), u(x,y, t)\}$ with  $u$ as the solution to
\begin{equation}\label{234}
\begin{aligned}
\pt_u G -\nabla\cdot  (\pt_{ \nabla u} G)-\lambda=0 , \quad \text{ in } D(t),\\
u(\Gamma(t),t)=0,\\
\int_{D(t)} u \ud x \ud y = V.
\end{aligned}
\end{equation}
This gives a reduced manifold  $\Gamma(t)$. Correspondingly, we have the quasi-static trajectory $\pcon_{qs}(t):=\Gamma(t)$, the quasi-static free energy
$
\F_{qs}(\Gamma(t)):= \F((\Gamma(t), u(x,y,t))).
$ and the quasi-static tangent plane $T_{\pcon_{qs}}$.
With the quasi-static metrics $g_{\pcon_r}(\pcon_r', \tilde{\pcon}_r')=\mathcal{R}\int_{\Gamma(t)}v_{cl} \tilde{v}_{cl} \ud s$, we have the  gradient flow for quasi-static dynamics
\begin{equation}
\begin{aligned}
\frac{\ud }{\ud s}\big|_{s=t} \F(\tilde{\pcon}(s)) = \frac{\ud }{\ud s}\big|_{s=t} \F_{qs}(\tilde{\pcon}_{qs}(s))
= -g_{\pcon_{qs}}(\pcon_r', \tilde{\pcon}_{qs}')= -\mathcal{R}\int_{\Gamma(t)}v_{cl} \tilde{v}_{cl} \ud s.
\end{aligned}
\end{equation}
Then by the  calculation in \eqref{var}, we have the gradient flow for $\Gamma(t)$
\begin{equation}
\begin{aligned}\label{re-gf}
\mathcal{R}v_{cl}(t) =-\frac{\delta \F}{\delta \Gamma}:=  -\left[G+|\nabla  u|(n_{cl} \cdot \pt_{\nabla u} G)\right ],\quad (x,y)\in \Gamma.
\end{aligned}
\end{equation}
Notice the right hand sides depend on $u$ which is solved by the nonlinear elliptic equation \eqref{234}. Combing the  \eqref{re-gf} with the elliptic equation \eqref{234} gives a complete description of the quasi-static dynamics of the droplet. 
\end{proof}

From \eqref{eq_nD},  the governing equations for a 2D droplet with wetting domain $D(t)=(a(t),b(t))$  become
\begin{equation}\label{wet-phy}
\begin{aligned}
\frac{\zeta}{\gamma_{lg}} \frac{\pt_t u(x,t)}{\sqrt{1+ (\pt_x u)^2}}= \frac{\pt }{\pt x}\left( \frac{\pt_x u}{\sqrt{1+ (\pt_x u)^2}} \right)-\varsigma u+\frac{1}{\gamma_{lg}}\lambda(t), \quad x\in(a(t),b(t)),\\
 u(a(t), t)=u(b(t), t)=0,\\
\frac{\mathcal{R}}{\gamma_{lg}}a'(t)= \sigma+\frac{1}{\sqrt{1+(\pt_x u)^2}},\quad x=a(t),\\
\frac{\mathcal{R}}{\gamma_{lg}}b'(t)= -\sigma-\frac{1}{\sqrt{1+(\pt_x u)^2}},\quad x=b(t),\\
\int_{a(t)}^{b(t)} u(x,t) \ud x = V.
\end{aligned}
\end{equation}
 In 2D, the units of $\gamma_{lg}$ becomes energy/length, $\mathcal{R}$ becomes mass/time, $\zeta$ becomes mass/(length$\cdot$time) and $\varsigma$ is 1/(length$^2$).
 Let $t=T \hat{t}$, $x=L \hat{x}$,  $u=L \hat{u}$, $a=L \hat{a}, b=L \hat{b}$, $V=L^2 \hat{V}$  and $\lambda=\frac{\gamma_{lg}}{L}\hat{\lambda}$ where $L$ is the typical length of the droplet and $T$ is the typical time scale to observe the motion of contact lines. In other words, we choose typical time $T$ such that $\frac{\mathcal{R}L}{\gamma_{lg}T}=1$ and typical volume for unit disk $\hat{V}=\pi$. We denote the capillary number for the capillary surface as $\beta:= \frac{\zeta L^2}{\gamma_{lg} T}$ and set $\kappa=L^2 \varsigma$,   both being dimensionless. Then the dimensionless equations (after dropping hat) in 2D are given by \eqref{wet-phy0}.

\subsection{Additional hydrodynamic effects inside the droplets}
 We also remark the relations between our pure geometric motion of droplets and the other contact line dynamics coupled with hydrodynamic effect of viscous fluid.

Consider further the hydrodynamic effect, which    specifically is (i)  adding  kinetic energy $\frac{1}{2}\rho \int_{A} |\mathbf v|^2  \ud x$ due to inertial effect in the free energy $\mathcal{F}$, (ii) adding viscosity dissipation inside the droplet and (iii) adding energy dissipation on solid-liquid interface due to Navier slip boundary condition. Then the energy dissipation relation becomes \cite[eq(39)]{ren2007boundary}
\begin{equation}
\frac{\ud}{\ud t}\left(\frac{1}{2}\rho \int_{A} |\mathbf v|^2  \ud x +  \mathcal{F}\right) = -\mu \int_{A} |\nabla \mathbf v|^2 \ud x - \frac{\mu}{\alpha} \int_{\pt A \cap \{z=0\}}  |\mathbf v|^2 \ud s  - \mathcal{R} \int_\Gamma v_{cl}^2 \ud s, 
\end{equation}
where $\alpha$ is the slip length.
The corresponding governing equations are
\begin{equation}\label{app_E}
\begin{aligned}
&\rho \left( \pt_t \mathbf v + (\mathbf v \cdot \nabla ) \mathbf  v  \right) + \nabla p =\mu \Delta \mathbf v , \quad \text{ in } A(t),\\
&\nabla \cdot \mathbf v = 0, \quad \text{ in } A(t),\\
& - p + \mathbf n^T \cdot \mu (\nabla \mathbf v+ \nabla \mathbf v^T) \cdot \mathbf n = \gamma_{lg} H , \quad \text{  on } \pt A(t) \cap \{z>0\},\\
& \mathbf \tau^T \cdot \mu (\nabla \mathbf v+ \nabla \mathbf v^T) \cdot \mathbf n=0, \quad \text{  on } \pt A(t) \cap \{z>0\},\\
& v_3=0, \quad  (v_1, v_2) = \alpha \pt_z ( v_1, v_2), \quad \text{ on } A(t) \cap\{ z=0\},\\
&\mathcal{R} v_{cl} = \gamma_{lg} (\cos \theta_Y-\cos \theta_{cl}), \text{ on } \Gamma(t).
\end{aligned}
\end{equation}
Here, $\mathbf v$ is the fluid velocity, $p$ is the pressure, $\mu$ is viscosity and $\rho$ is the fluid density. 
 Notice in the third equation in \eqref{app_E}, the normal traction (normal force) induced by the fluids $F_n= - p + \mathbf n^T \cdot \mu (\nabla \mathbf v+ \nabla \mathbf v^T) \cdot \mathbf n = \gamma_{lg} H$ is balanced with the mean curvature $\gamma_{lg} H$. Meanwhile, in the purely geometric motion, the normal velocity of the capillary surface $\zeta v_n=-\gamma_{lg} H$ can be understood as  velocity induced by an underlying normal frictional force $F_n= - \zeta v_n$.   Roughly speaking, the purely geometric motion derived in Appendix \ref{app_model} captures the same main feature (motion by mean curvature of capillary surface) as the original hydrodynamic one  in \cite{ren2007boundary}, where the normal velocity of the capillary surface is induced by the fluid velocity following Navier-Stokes equation.

Recall the  first equation in \eqref{eq_nD} in the absence of gravity, i.e.,
$$\zeta v_n = \zeta\frac{\pt_t u}{\sqrt{1+ |\nabla u|^2}} = \gamma_{lg}  \nabla\cdot \left(\frac{\nabla u}{\sqrt{1+|\nabla u|^2}}\right)+\lambda(t)=-\gamma_{lg} H + \lambda(t).$$
The energy dissipation relation \eqref{dissipation_o}   is exactly same as the dissipation relation in \cite[eq(38)]{ren2007boundary},
$$\frac{\ud}{\ud t} \mathcal{F} = -  \gamma_{lg}\int_{\pt A\cap \{z>0\}}  -H \mathbf n \cdot \mathbf v  \ud s  - \mathcal{R} \int_\Gamma v_{cl}^2 \ud s, $$
where  $\mathbf v$ is the velocity of the capillary surface. Here $\lambda(t)$ does not contribute due to  $\nabla \cdot \mathbf v=0$ inside the droplet.

\section{Proof for the truncation analysis of first and second order schemes}\label{appA}
In this section, we give some truncation error estimates for the first and second order schemes in the case $w(x)=0$, $\theta_0=0$ and thus $h(x)=u(x)$. Now the governing equation \eqref{wet-eq-r} becomes \eqref{wet-phy0}.
\begin{proof}[Proof of Lemma \ref{first-lem1} (first order truncation error estimates)]
Let $a(t^{n+1}), b(t^{n+1}), u(x^{n+1},t^{n+1})$ for $x^{n+1}\in[a(t^{n+1}), b(t^{n+1})]$ be the exact solution to \eqref{wet-phy0}  evaluated at $t=t^{n+1}$ with initial data at $t=t^n$, $a^n, b^n, u^n(x^n)$ for $x^n\in[a^n, b^n]$. We outline the idea of proof below.

Step 1. Truncation error estimate \eqref{abn} for  moving boundary. By Taylor expansion 
\begin{equation}
a(t^{n+1})=a^n + a'(t^n) \Delta t + O(\Delta t ^2),
\end{equation}
and the boundary condition in \eqref{wet-phy0}, 
\begin{equation}
a'(t^n)= (\sigma +\frac{1}{\sqrt{1+ (\pt_x u^n)^2}} )\big|_{a^n}, 
\end{equation}
we have
\begin{equation}\label{an}
a(t^{n+1}) = a^n + \Delta t   (\sigma +\frac{1}{\sqrt{1+ (\pt_x u^n)^2}} )\big|_{a^n} + O(\Delta t ^2),
\end{equation}
which corresponds to \eqref{abn} for $w(x)=0$.
Similarly for $b(t^{n+1})$, we also have
\begin{equation}\label{bn}
b(t^{n+1}) = b^n - \Delta t   (\sigma +\frac{1}{\sqrt{1+ (\pt_x u^n)^2}} )\big|_{b^n} + O(\Delta t ^2).
\end{equation}

Next, we prove truncation error estimate \eqref{un} and divide the proof into four steps.

Step 2.  Map the moving domain to fixed domain.
We map the moving domain $[a(t), b(t)]$ to the fixed domain $[0,1]$  by $Z(x,t)=\frac{x-a(t)}{b(t)-a(t)}\in [0,1]$ for any $x\in[a(t), b(t)]$. Particularly, at different times we have the relation
\begin{equation}\label{relationZ}
 Z(x^{n+1},t^{n+1})=Z(x^n, t^n)= Z(x^{n-1}, t^{n-1})=Z(x^{n+\frac12}, t^{n+\frac12})
\end{equation}
for independent variables $x^k\in[a^k, b^k],\, k=n-1, n, n+\frac12, n+1$ respectively.

Denote $U(Z,t):=u(x,t)$. Then changing of variables shows that
\begin{equation}
\begin{aligned}
     \pt_t u = \pt_t U + \pt_Z U  \pt_t Z , \quad \pt_x u = \frac{1}{b-a} \frac{\pt U}{\pt Z}.
\end{aligned}
\end{equation}
Then we recast \eqref{wet-phy0} in terms of $Z, U$ variables
\begin{equation}
\frac{\beta}{\sqrt{1+ (\frac{\pt_Z U}{b-a})^2}} \left( \pt_t U + \pt_Z U  \pt_t Z  \right) = \frac{1}{(b-a)^2} \pt_z \left( \frac{\pt_Z U}{\sqrt{1+  (\frac{\pt_Z U}{b-a})^2 }} \right)- \kappa U+\lambda.
\end{equation}

Step 3. Truncation error for the term $\pt_t u = \pt_t U + \pt_Z U  \pt_t Z$.
First, using the backward Euler approximation, we can approximate this term. From relation \eqref{relationZ}, we have for $ x^{n+1}\in[a(t^{n+1}),b(t^{n+1})],$
\begin{equation}\label{semi16}
\begin{aligned}
\pt_t u(x^{n+1}, t^{n+1}) =& \pt_t U(Z, t^{n+1}) + \pt_Z U(Z, t^{n+1})  \pt_t Z(x^{n+1}, t^{n+1}) \\
=&  \frac{U(Z, t^{n+1})-U^n(Z)}{\Delta t} + {\pt_Z U^n(Z)} \frac{Z(x^{n+1}, t^{n+1})- Z(x^{n+1}, t^n)}{\Delta t} + O(\Delta t ) \\
=&\frac{u(x^{n+1}, t^{n+1})-u^n(x^n)}{\Delta t} + \pt_x u^n(x^n) \frac{\pt x^n}{\pt Z} \frac{Z(x^{n}, t^{n})- Z(x^{n+1}, t^n)}{\Delta t} + O(\Delta t )  \\
= &\frac{u(x^{n+1}, t^{n+1})-u^n(x^n)}{\Delta t} + \frac{\pt_x u^n(x^n) (x^n-x^{n+1})}{\Delta t} + O(\Delta t ),
\end{aligned}
\end{equation}
where
\begin{equation}\label{tm_xn}
x^n= a^n+ \frac{b^n-a^n}{b(t^{n+1})-a(t^{n+1})}(x^{n+1}-a(t^{n+1})).
\end{equation}
Denote
\begin{equation}\label{86xy}
\begin{aligned}
    &u^{*}(x^{n+1}, t^n):= u^n(x^n)+ \pt_x u^n (x^n)(x^{n+1}-x^n),
    \end{aligned}
\end{equation}
which is exactly \eqref{inter-u-0}.

In summary, the semi-Lagrangian term has first order accuracy 
\begin{equation}\label{n31}
\pt_t u(x^{n+1},t^{n+1}) = \frac{u(x^{n+1},t^{n+1})-u^{*}(x^{n+1}, t^n)}{\Delta t} + O(\Delta t ).
\end{equation}

Step 4. Truncation error for the stretching term $\frac{1}{\sqrt{1+(\pt_x u)^2}}$. From the relation between $x^n$ and $x^{n+1}$ in \eqref{tm_xn} and the truncation error  in Step 1, we have
\begin{align*}
x^{n+1}-x^n&= \frac{(b( t^{n+1})-b^n)-(a(t^{n+1})-a^n)}{b( t^{n+1})-a( t^{n+1})}x^{n+1}+ \frac{(b^n-a^n)a( t^{n+1})-(b( t^{n+1})-a( t^{n+1}))a^n}{b( t^{n+1})-a( t^{n+1})}\\
&=\frac{(b( t^{n+1})-b^n)-(a( t^{n+1})-a^n)}{b( t^{n+1})-a( t^{n+1})}x^{n+1}+ \frac{b^n(a( t^{n+1})-a^n)-a^n(b( t^{n+1})-b^n)}{b( t^{n+1})-a( t^{n+1})}\\
&= O(b( t^{n+1})-b^n)+ O(a( t^{n+1})-a^n)
\end{align*}
and thus
\begin{equation}
|x^{n+1}-x^n| = O(\Delta t ).
\end{equation}
Combing \eqref{tm_xn} and \eqref{86xy}, we have
\begin{equation}\label{accu23}
\begin{aligned}
\pt_x u^{*}(x^{n+1}, t^{n}) &= \pt_x u^n (x^n) \frac{\pt x^n}{\pt x^{n+1}} + \pt_x u^n(x^n)  \pt_x(x^{n+1}-x^n)+ \pt_{xx} u^n(x^{n}) \frac{\pt x^{n}}{\pt x^{n+1}} (x^{n+1}-x^n) \\
&= \pt_x u^n(x^n)  \frac{b^n-a^n}{b(t^{n+1})-a(t^{n+1})} + \pt_x u^n(x^n) \frac{(b(t^{n+1})-b^n)-(a(t^{n+1})-a^n)}{b(t^{n+1})-a(t^{n+1})}+ O(|x^{n+1}-x^n|)\\
&= \pt_x u^n(x^n)+ O(|x^{n+1}-x^n|).
\end{aligned}
\end{equation} 
In summary, we have
\begin{equation}\label{n34}
\frac{1}{b(t^{n+1})-a(t^{n+1})}\pt_Z U^{n*}(Z)= \pt_x u^{*} (x^{n+1}, t^n) = \pt_x u^n(x^n) + O(\Delta t ) = \frac{1}{b^n-a^n}\pt_Z U^{n}(Z)+ O(\Delta t ) .
\end{equation}

Step 5. Truncation error for $u(x^{n+1},t^{n+1}), \, x^{n+1}\in[a(t^{n+1}),b(t^{n+1})]$. Plugging  $u(x^{n+1}, t^{n+1})$ into the first equation in \eqref{wet-phy0}, from \eqref{n31} and \eqref{n34}
\begin{equation}
\frac{\beta}{\sqrt{1+ (\pt_x u^{*}(t^n))^2}} \frac{u(t^{n+1})-u^{*}(t^n)}{\Delta t} = {\blue \frac{\pt_{xx} u(t^{n+1})}{ \bbs{1+(\pt_x u^{*}(t^n))^2}^\frac32}  } -\kappa u(t^{n+1})+\lambda + O(\Delta t ),
\end{equation}
for $x^{n+1}\in[a(t^{n+1}),b(t^{n+1})]$.
We conclude the proof.
\end{proof}

\begin{proof}[Proof of Lemma \ref{second-lem1} (second order truncation error estimates)]
Let $a(t^{n+1}), b(t^{n+1}), u(x^{n+1},t^{n+1})$ for $x^{n+1}\in[a(t^{n+1}), b(t^{n+1})]$ be the exact solution to \eqref{wet-phy0}  evaluated at $t=t^{n+1}$ with initial data at $t=t^n$, $a^n, b^n, u^n(x^n)$ for $x^n\in[a^n, b^n]$.  We will prove truncation error estimates \eqref{second-abn} and \eqref{second-hn} separately in Step 1 and Step 2.
We outline the idea of proof below.

Step 1. Second order truncation error for  the moving boundary \eqref{second-abn}.

We first illustrate the idea of the usual truncation error estimates for the predictor-corrector ODE solver for $v'=f(v)$ with $v''=f'(v)f(v)$.
By Taylor expansion, 
\begin{equation}
\begin{aligned}
v^{n+1} &= v^n + \Delta t  f(v^n) + \frac{(\Delta t)^2}{2} f(v^n) f'(v^n) + O(\Delta t ^3)\\
&= v^n + \frac{\Delta t}{2} \left[ f(v^n) + f(v^n)+ \Delta t f(v^n)f'(v^n)  \right] + O(\Delta t ^3)\\
&= v^n + \frac{\Delta t}{2} \left[ f(v^n) + f(v^n+ \Delta t f(v^n))  \right] + O(\Delta t ^3),
\end{aligned}
\end{equation}
which is equivalent to
\begin{equation}\label{342}
\frac{\tilde{v}^{n+1}- v^n}{\Delta t} = f(v^n), \quad  \frac{v^{n+1}-v^n}{\Delta t} = \frac12 \left[ f(v^n)+ f(\tilde{v}^{n+1}) \right] + O(\Delta t ^2).
\end{equation}
Moreover, for any smooth function $W(v)$, we have the second order estimate
\begin{equation}\label{w-43}
W(v(t^{n+\frac12})) = \frac12 \left( W(v^n)+ W(\tilde{v}^{n+1} ) \right) + O(\Delta t ^2).
\end{equation}
Indeed, since $v(t^{n+\frac12})= v^n+ \frac{\Delta t}{2}f(v^n)+ O(\Delta t ^2)$, by Taylor's expansion we have
\begin{equation}
\text{LHS} = W(v^{n}) + \frac{\Delta t}{2} W'(v^n) f(v^n) + O(\Delta t ^2)=\frac12\left[ W(v^n)+ W(v^n+ {\Delta t} f(v^n))  \right] + O(\Delta t ^2) = \text{RHS}.
\end{equation}
\eqref{w-43} also gives another method for second order truncation error estimate by evaluating the equation $v'=f(v)$ at $t^{n+\frac12}$, which is \eqref{342}. The truncation error in Step 2 will rely on this method.

Second, we again recast the equation for the moving boundary in terms of the fixed domain variable $U(Z,t)=u(x,t)$ with $Z(x,t)=\frac{x-a(t)}{b(t)-a(t)}\in[0,1]$.
\begin{equation}\label{A20}
\begin{aligned}
a'(t) = \sigma + \frac{1}{\sqrt{1+ \frac{(\pt_Z U)^2|_{Z=0}}{(b(t)-a(t))^2} }}=: g(a(t), b(t), \pt_Z U(0,t)),\\
b'(t) = - \sigma -  \frac{1}{\sqrt{1+ \frac{(\pt_Z U)^2|_{Z=1}}{(b(t)-a(t))^2} }}=: q(a(t),b(t), \pt_Z U(1,t)).
\end{aligned}
\end{equation}

Third, analogue to the usual predictor-corrector ODE solver, we calculate the truncation error for $a^{n+1}$. Notice
\begin{equation}
a''(t)= a' \pt_1 g + b' \pt_2 g  + \pt_3 g \pt_{t Z} U|_{Z=0}.
\end{equation}
Taylor's expansion gives us
\begin{align*}
a(t^{n+1}) = a^n + \Delta t g(a^n, b^n, \pt_Z U^n ) + \frac{\Delta t ^2}{2} \left[ a' \pt_1 g + b' \pt_2 g  + \pt_3 g \pt_{t Z} U |_{Z=0} \right]^n + O(\Delta t ^3).
\end{align*}
Hence from Taylor's expansion of $g(\tilde{a}^{n+1}, \tilde{b}^{n+1}, \pt_z \tilde{U}^{n+1})$
\begin{equation}
\begin{aligned}
&\frac{a(t^{n+1})-a^n}{\Delta t} \\
=& \frac12 g(a^n, b^n, \pt_Z U^n) + \frac12 g(a^n+ \Delta t \frac{\tilde{a}^{n+1}-a^n}{\Delta t}, b^n+ \Delta t \frac{\tilde{b}^{n+1}-b^n}{\Delta t}, \pt_Z U^n+ \Delta t \frac{\pt_Z \tilde{U}^{n+1}- \pt_Z U^n}{\Delta t})+ O(\Delta t ^2)\\
=&\frac12 g(a^n, b^n, \pt_Z U^n) + \frac12 g(\tilde{a}^{n+1}, \tilde{b}^{n+1}, \pt_z \tilde{U}^{n+1})+ O(\Delta t ^2),
\end{aligned}
\end{equation}
provided $\frac{\tilde{a}^{n+1}-a^n}{\Delta t}-(a')^n$ and $\frac{\pt_Z \tilde{U}^{n+1}- \pt_Z U^n}{\Delta t}-(\pt_{Zt}U)^n|_{z=0}$ have $O(\Delta t )$ accuracy.

Finally, we prove $\frac{\tilde{a}^{n+1}-a^n}{\Delta t}-(a')^n$ and $\frac{\pt_Z \tilde{U}^{n+1}- \pt_Z U^n}{\Delta t}-(\pt_{Zt}U)^n|_{Z=0}$ have $O(\Delta t )$ accuracy.
Since the predictor $\tilde{a}^{n+1}$ is given by the first order scheme in Section \ref{sec-sim}, we know $\frac{\tilde{a}^{n+1}-a^n}{\Delta t}-(a')^n$ has $O(\Delta t )$ accuracy and we obtain \eqref{abn}. To estimate $\frac{\pt_Z \tilde{U}^{n+1}- \pt_Z U^n}{\Delta t}$, we give the following claim.\\
\textit{
Claim 1: Assume we have the error estimates 
\begin{equation}
\tilde{a}^{n+1}-a^{n+1}=O(\Delta t^2),\,\, \tilde{b}^{n+1}-b^{n+1}=O(\Delta t^2),\quad \tilde{u}^{n+1}(\tilde{x}^{n+1})-u^{n+1}(x^{n+1}) = O(\Delta t^2).
\end{equation}
Then we have the second order accuracy
\begin{equation}
(\pt_x \tilde{u})^{n+1} (\tilde{x}^{n+1})= (\pt_x u)^{n+1}(x^{n+1})+ O(\Delta t^2).
\end{equation}
}
The proof of Claim 1 is based on changing moving domain to fixed domain by $Z=\frac{\tilde{x}^{n+1}-\tilde{a}^{n+1}}{\tilde{b}^{n+1}-\tilde{a}^{n+1}}=\frac{x^{n+1}-a^{n+1}}{b^{n+1}-a^{n+1}}$, which is similar to \eqref{A20} and will be omitted. Notice the first order accuracy of predictor $\tilde{a}^{n+1}, \, \tilde{b}^{n+1}$ and we used implicit elliptic solver with second order accuracy in \eqref{newton} for predictor $\tilde{u}^{n+1}$, so  the assumptions in claim 1 are satisfied automatically. Thus from the Taylor expansion and claim 1 we know
\begin{equation}
\frac{\pt_Z \tilde{U}^{n+1}- \pt_Z U^n}{\Delta t}+ O(\Delta t) = \frac{\pt_Z {U}^{n+1}- \pt_Z U^n}{\Delta t}+ O(\Delta t)= (\pt_{tZ} U)^n+O(\Delta t).
\end{equation}
Therefore, we complete the second order truncation error estimates for the moving boundary \eqref{second-abn}.

Step 2. Second order truncation error estimates \eqref{second-hn} for $u^{n+1}$.

First from the similar argument for \eqref{w-43}, we have the following generalized claim
\\\textit{
Claim 2: For any smooth function $W(v(x,t),v_x(x,t), v_{xx}(x,t), x, t)$, we have
\begin{equation}
\begin{aligned}
&W(v(t^{n+\frac12}), v_x(t^{n+\frac12}), v_{xx}(t^{n+\frac12}),x^{n+\frac12}, t^{n+\frac12}) \\
=& \frac12[W(v^n, v_x^n, v_{xx}^n, x^n, t^n)+ W(v^{n+1}, v_x^{n+1}, v_{xx}^{n+1}, x^{n+1}, t^{n+1})]+ O(\Delta t^2),
\end{aligned}
\end{equation}
where the equality holds in the sense of changing variables to fixed domain $Z=\frac{x-a(t)}{b(t)-a(t)}$ with the relation \eqref{relationZ}.
}

 Second, notice the derivation for  the term $\pt_t h$  in \eqref{inter-u-star} gives the second order accuracy
$$ \frac{u^{n+1}(x^{n+1})- \tilde{u}^{n*}}{\Delta t}=\pt_t u (x^{n+\frac12}, t^{n+\frac12})  + O(\Delta t^2).$$ 
 Using further Claim 1 and Claim 2, we obtain the second order accuracy  for \eqref{second-hn}.
 \begin{align*}
 \frac12\left[\frac{1}{\sqrt{1+ (\pt_x u^{n})^2}}+ \frac{1}{\sqrt{1+ (\pt_x \tilde{u}^{n+1})^2}}\right]= \frac{1}{\sqrt{1+ (\pt_x u)^2}}\Big|_{t^{n+\frac12}} + O(\Delta t^2),\\
 \frac12 \left(\frac{\pt_{xx} {u}^{n+1}}{\bbs{1+ (\pt_x \tilde{u}^{n+1})^2}^\frac32} +  \frac{\pt_{xx} u^n}{\bbs{1+ (\pt_x  u^n)^2}^\frac32} \right)=  \left(\frac{\pt_{xx} {u}}{\bbs{1+ (\pt_x {u})^2}^\frac32} \right)\Big|_{t^{n+\frac12}}+ O(\Delta t^2).
 \end{align*}
 Therefore, we complete the second order truncation error estimates for  \eqref{second-hn}.
\end{proof}

\section{Pseudo-codes for first and second order schemes}\label{appB}
\subsection{First order in time and second order in space}\label{code1-in}
We present a pseudo-code for the first order scheme in Section \ref{sec-1st-scheme}:
\\1. Grid for time:
$t^n = n \Delta t$, $n=0 ,1, \cdots,$ where $\Delta t$ is time step.
\\ 2. Fix $N$ and set moving grids for space: $x_{j}^n = a^n + {j}{\tau^n},\,\,\tau ^n=\frac{b^n-a^n}{N}$, $j=-1,0, 1, \cdots, N+1.$ 
\\ 3. Calculate volume $V:= \sum_{j=1}^{N-1}( h^0-w)(x^0_j) \tau^0$.
\\4. Denote the finite difference operators
\begin{equation}\label{tm_C2}
\begin{aligned}
&(\pt_x h)_0^n= \frac{4 h_1^n - h_2^n-3h_0^n}{2 \tau^n}, \quad  (\pt_x h)_N^n= \frac{-4 h_{N-1}^n + h_{N-2}^n+ 3 h_{N}^n}{2 \tau^n}, \\
 &(\pt_x h)_j^{n} = \frac{h_{j+1}^{n} - h^{n}_{j-1}}{2 \tau^{n}}, \quad   (\pt_{xx} h)_j^{n}=  \frac{h^{n}_{j}-2h^{n}_j + h_{j-1}^{n}} {(\tau^{n})^2},\,\, j=1, \cdots, N-1,
 \end{aligned}
\end{equation}
with Dirichlet boundary condition $h^{n+1}_0= w(x^{n+1}_0),\,\, h^{n+1}_N = w(x_N^{n+1}) $.
\\ 5. Update $a^{n+1}, b^{n+1}$, $j=0, 1, \cdots, N$.
\\ ~~~~  
\begin{align}\label{code-ea}
\frac{a^{n+1}-a^n}{\Delta t}&= \sigma \sqrt{1+ (\pt_x w)_0^2} +\frac{1+ (\pt_x h^n)_0(\pt_x w)_0 }{\sqrt{1+ (\pt_x h^n)_0^2}} ,  \quad  \text{ with } (\pt_x w)_0=\pt_x w(x^n_0).
\end{align}
\begin{align}\label{code-eb}
\frac{b^{n+1}-b^n}{\Delta t}&= -\sigma \sqrt{1+ (\pt_x w)_N^2} -\frac{1+ (\pt_x h^n)_N(\pt_x w)_N }{\sqrt{1+ (\pt_x h^n)_N^2}} , \quad   \text{ with } (\pt_x w)_N=\pt_x w(x^n_N).
\end{align}
\\6. Update the moving grids
$\, x_{j}^{n+1} = a^{n+1} + {j}{\tau^{n+1}}, \,\, \tau^{n+1}= \frac{b^{n+1}-a^{n+1}}{N}, \quad  j=0, 1, \cdots, N.$
\\ 7. From \eqref{inter-u-0}, 
$\,h_j^{n*}= h_j^n + (\pt_x h^n)_j (a^{n+1}-a^n+j(\tau^{n+1}-\tau^n)), \quad j=0, \cdots, N.$
\\ {\blue 8. Solve $h^{n+1}$ semi-implicitly
\\For $j=1, \cdots, N-1$, 
\begin{align}\label{code-eq-r-0}
&\beta \frac{h_j^{n+1}-h^{n*}_j}{\Delta t}\frac{1}{\sqrt{1+ (\pt_x h^{n*})_j^2}}=\frac{(\pt_{xx} h^{n+1})_j}{\bbs{1+(\pt_x h^{n*})_j^2}^\frac32}  -\kappa (h_j^{n+1} \cos \theta_0 + x^{n+1}_j \sin \theta_0  ) +\lambda^{n+1},\\
&\sum_{j=1}^{N-1} (h^{n+1}_j-w(x_j^{n+1})) \tau^{n+1} = V, \nonumber
\end{align}
Due to the $O(|x^{n+1}-x^n|)=O(\Delta t)$ accuracy for $\pt_x u^{*}(x^{n+1})$ and $\pt_x u^n(x^n)$ in \eqref{accu23}, to ensure the stability in the implementation,  we can also replace $(\pt_x h^{n*})_j$ by $(\pt_x h^{n})_j$.

Denote a positive-definite matrix $A_{(N-1)\times(N-1)}=(a_{ij})$
with
\begin{equation}\label{tri-r-0}
\begin{array}{c}
a_{j, j-1} = -1, \,\, a_{j,j+1} = -1, \quad \alpha_j=1+ (\pt_x h^{n})_j^2 \\
a_{j,j} =2+ \frac{\beta (\tau^{n+1})^2}{\Delta t}\alpha_j+ \kappa \cos \theta_0 (\tau^{n+1})^2 \alpha_j^{\frac32},\\
\end{array}
\end{equation}
 and \eqref{code-eq-r-0} becomes for $j=1, \cdots, N-1$, 
\begin{equation}\label{1st-eq-u}
\begin{aligned}
a_{j,j-1} h^{n+1}_{j-1} + a_{j,j} h^{n+1}_j + a_{j,j+1} h^{n+1}_{j+1} - \alpha_j^\frac32 (\tau^{n+1})^2\lambda^{n+1}=  \frac{\beta (\tau^{n+1})^2}{\Delta t} h_j^{n*} \alpha_j-\kappa \sin \theta_0 x_{j}^{n+1} (\tau^{n+1})^2 \alpha_j^{\frac32}=: \tilde{f}_j.
\end{aligned}
\end{equation}
Denote
\begin{equation}
f_1=\tilde{f}_1+w(x_0^{n+1}),\,\,   \{f_j= \tilde{f}_j\}_{j=2}^{N-2} , \,\, f_{N-1} = \tilde{f}_j + w(x_N^{n+1}), \,\, f_N:= \sum_{j=1}^{N-1} w(x_j^{n+1})  + \frac{V}{\tau^{n+1}}.
\end{equation}

 The resulting linear system $\bar{A} y = f$ has a non-singular   matrix 
\begin{equation}
\bar{A}=\left(
\begin{array}{cc}
A & \alpha^\frac32 \\ 
e^T & 0
\end{array} 
 \right)_{N\times N},
\end{equation}
where  $y^T=(h^{n+1}_1, \cdots, h^{n+1}_{N-1}, -(\tau^{n+1})^2\lambda^{n+1})$ and $e^T=(1,\cdots , 1)\in \mathbb{R}^{N-1}$.   
}

\subsection{Predictor-corrector scheme: second order in time and space }\label{code2-in}
We present a pseudo-code for the second order scheme in Section \ref{sec_2nd_scheme}:
\\1. Grid for time:
$t^n = n \Delta t$, $n=0 ,1, \cdots,$ where $\Delta t$ is time step.
\\ 2. 
Fix $N$ and set moving grids for space: $x_{j}^n = a^n + {j}{\tau^n},\,\,\tau ^n=\frac{b^n-a^n}{N}$, $j=-1,0, 1, \cdots, N+1.$
\\ 3. Calculate volume $V:= \sum_{j=1}^{N-1}( h^0-w)(x^0_j) \tau^0$ and denote the finite difference operator with Dirichlet boundary condition as \eqref{tm_C2}.
\\4. Repeat the first order scheme \eqref{code-ea}, \eqref{code-eb} in Section \ref{code1-in} with implicit nonlinear elliptic solver.
For $j=1, \cdots, N-1$, with $h^{n+1}_0= w(a^{n+1}),\,\, h^{n+1}_N = w(b^{n+1}) $,
\begin{align}\label{code-eq-r-im}
&\beta \frac{h_j^{n+1}-h_j^{n*}}{\Delta t}\frac{1}{\sqrt{1+( (\pt_x h)_j^{n+1})^2}}= \frac{(\pt_{xx} h)^{n+1}_j}{\bbs{1+((\pt_x h)^{n+1}_j)^2}^\frac32}  -\kappa (h_j^{n+1} \cos \theta_0 + x^{n+1}_j \sin \theta_0  ) +\lambda^{n+1},\nonumber\\
&\sum_{j=1}^{N-1} (h^{n+1}_j-w(x_j^{n+1})) \tau^{n+1} = V, \nonumber
\end{align}
 Denote the results as the predictor $\tilde{a}^{n+1}, \tilde{b}^{n+1}, \tilde{h}^{n+1}(\tilde{x}^{n+1}), \tilde{\tau}^{n+1}$ for $\tilde{x}^{n+1}\in [\tilde{a}^{n+1}, \tilde{b}^{n+1}]$.
\\ 5. Update $a^{n+1}, b^{n+1}$, $j=0, 1, \cdots, N$.
\\ ~~~~  
\begin{align*}
\frac{a^{n+1}-a^n}{\Delta t}&=\frac12\left\{ \sigma \sqrt{1+ (\pt_x w)_0^2}+ \sigma\sqrt{1+ (\pt_x \tilde{w})_0^2} +\frac{1+ (\pt_x h)^n_0(\pt_x w)_0 }{\sqrt{1+ ((\pt_x h)^n_0)^2}}+ \frac{1+ (\pt_x \tilde{h})^{n+1}_0(\pt_x \tilde{w})_0 }{\sqrt{1+ ((\pt_x \tilde{h})^{n+1}_0)^2}}\right\},
\\
\frac{b^{n+1}-b^n}{\Delta t}&=-\frac12\left\{ \sigma \sqrt{1+ (\pt_x w)_N^2}+ \sigma\sqrt{1+ (\pt_x \tilde{w})_N^2} +\frac{1+ (\pt_x h)^n_N(\pt_x w)_N }{\sqrt{1+ ((\pt_x h)^n_N)^2}}+ \frac{1+ (\pt_x \tilde{h})^{n+1}_N(\pt_x \tilde{w})_N }{\sqrt{1+ ((\pt_x \tilde{h})^{n+1}_N)^2}}\right\} , \quad\\
\text{with }& (\pt_x w)_0:=\pt_x w(x^n_0),\,\, (\pt_x \tilde{w})_0:=\pt_x w(\tilde{x}^{n+1}_0),\quad (\pt_x w)_N:=\pt_x w(x^n_N),\,\, (\pt_x \tilde{w})_N:=\pt_x w(\tilde{x}^{n+1}_N).
\end{align*}
\\6. Update the moving grids
$x_{j}^{n+1} = a^{n+1} + {j}{\tau^{n+1}}, \,\, \tau^{n+1}= \frac{b^{n+1}-a^{n+1}}{N}, \quad  j=0, 1, \cdots, N.$
\\ 7. Calculate $h_j^{n*}$ based on \eqref{tm376}, for $ j=1, \cdots, N-1$.
\\ 8. Solve $h^{n+1}$ implicitly. For $j=1, \cdots, N-1$, with $h^{n+1}_0= w(x^{n+1}_0),\,\, h^{n+1}_N = w(x_N^{n+1}) $,
\begin{equation}
\begin{aligned}\label{code-eq-r}
&\beta \frac{h_j^{n+1}-h_j^{n*}}{\Delta t}\left[\frac{1}{\sqrt{1+ ((\pt_x h)_j^{n})^2}}+ \frac{1}{\sqrt{1+ ((\pt_x  {h})_j^{n+1})^2}}\right]\\
=& \frac{(\pt_{xx} h)^{n+1}_j}{\bbs{1+((\pt_x {h})_j^{n+1})^2}^\frac32} +  \frac{(\pt_{xx} h)^n_j}{\bbs{1+(\pt_x h_{j}^{n})^2}^\frac32} -\kappa [(h_j^{n+1}+h_j^n) \cos \theta_0 +(x^n_j+x^{n+1}_j) \sin \theta_0  ] +2\lambda^{n+\frac12},\\
&\sum_{j=1}^{N-1} (h^{n+1}_j-w(x_j^{n+1})) \tau^{n+1} = V.
\end{aligned}
\end{equation}


\section{Gradient Flow for single sessile drops in non-wetting case using horizontal graph representation $X(u)$ }\label{app_nonwet}

 Recall the description of the non-wetting droplet in terms of $X(u)$ in \eqref{inverse}.
We consider the manifold based on $X(u)$
\begin{equation}
\mm:= \{\eta=(u_m, X(u)); u_m\geq 0, X(u)\in H^1(0,u_m),  \, X(u_m)=0, X_u(u_m)=-\8\}.
\end{equation}

Similar to Appendix \ref{app_model}, we calculate the gradient flow on manifold $\mm$. Below, we directly use dimensionless quantities for simplicity.  Let $\tilde{X}(u, s)$ with $\tilde{u}_m(s)$ be trajectory on $\mm$ starting from $X(u,t)$ with $u_m(t)$. Based on the relations in Lemma \ref{lem_re}, the tangent plane $T_\eta \mm$ at $\eta$ can be described by the contact line speed $v_{cl}=X_t(0)$ and the horizontal velocity $v=X_t$ of the capillary surface. 
Notice from
$X(u_m(t),t)=0$, on the tangent plane $T_\eta \mm$ we have 
\begin{equation}
\pt_t\tilde{X}= - X_u \pt_t \tilde{u}_m \quad \text{ on }u=u_m.
\end{equation}
For any $q_1=(v_{cl1}, v_1),\, q_2=(v_{cl2},v_2)\in T_{\pcon}\mm$, define the Riemannian metric $g_{\pcon}: T_\eta \mm \times T_\eta \mm \to \mathbb{R}$  
\begin{equation}
g_{\pcon} (q_1, q_2):= v_{cl1} v_{cl2} + \beta \int_0^{u_m} \frac{v_1 v_2}{\sqrt{1+ |X_u|^2}} \ud u.
\end{equation}
Consider the free energy
\begin{equation}
\begin{aligned}
\frac{1}{2}E(X) &= \int_0^{u_m} \sqrt{1+ X_u^2} \ud  u + \sigma X(0) + \kappa \int_0^{u_m} u X(u) \ud u - \lambda(\int_0^{u_m} X(u) \ud u-V/2)\\
&=  \int_0^{u_m} \sqrt{1+ X_u^2} \ud  u - \sigma \int_0^{u_m} X_u \ud u + \kappa \int_0^{u_m} u X(u) \ud u - \lambda(\int_0^{u_m} X(u) \ud u-V/2)\\
&=:\int_0^{u_m} G \ud u + \lambda V/2=: \int _0^{u_m} G_1(X_u)+ G_2(u, X(u))\ud u+ \lambda V/2,
\end{aligned}
\end{equation}
where $G_1:=\sqrt{1+ X_u^2}- \sigma X_u$ and $G_2:=\kappa u X(u)-\lambda X(u). $ 
First notice the identity
\begin{equation}\label{tm81}
G-X_u G_{X_u} \Big|_{u_m}= \frac{1}{\sqrt{1+X_u^2}} \Big|_{u_m}=0.
\end{equation}
Then we have
\begin{align*}
&\frac12\frac{\ud}{\ud s}\Big|_{s=t}E(\tilde{X}(u(s),s)= G|_{u_m} \tilde{u}_m'+ \int_0^{u_m} G_X \pt_t \tilde{X} + G_{X_u} \pt_u(\pt_t \tilde{X}) \ud u-\lambda'(t) (\int_0^{u_m} X(u) \ud u-V/2)\\
=& G|_{u_m} \tilde{u}_m'+ \int_0^{u_m} (G_X -\frac{\ud}{\ud u} G_{X_u})\pt_t \tilde{X} \ud u+ G_{X_u}\pt_t \tilde{X}\big |_{0}^{u_m}-\lambda'(t) (\int_0^{u_m} X(u) \ud u-V/2)\\
 =& (G-X_u G_{X_u} )\big|_{u_m} \tilde{u}_m'+ \int_0^{u_m} (G_X -\frac{\ud}{\ud u} G_{X_u})\pt_t \tilde{X} \ud u  - G_{X_u} \pt_t \tilde{X} \big|_{u=0}-\lambda'(t) (\int_0^{u_m} X(u) \ud u-V/2)\\
=&   \int_0^{u_m} (G_X -\frac{\ud}{\ud u} G_{X_u})\pt_t \tilde{X} \ud u - G_{X_u} \pt_t \tilde{X} \big|_{u=0}-\lambda'(t) (\int_0^{u_m} X(u) \ud u-V/2),
\end{align*}
where we used \eqref{tm81}. 
Then the gradient flow of $E$ on manifold $\mm$ with respect to the metrics $g_{\eta}$ gives the governing equations \eqref{dewet-eq-n}.
{\begin{rem}
Using the similar derivations above, we have the governing equations for a 3D axisymmetric droplet in terms of $R(u,t)$
\begin{equation}
\begin{aligned}
\beta \frac{\pt_t R}{\sqrt{1+ R_u^2}} = -\frac{ \sqrt{1+R_u^2}}{R} + \frac{1}{R}\pt_u\left( \frac{R R_u}{\sqrt{1+ R_u^2}} \right) -\kappa u  + \lambda , \\
\pt_t R(0) = -\left(\sigma - \frac{R_u}{\sqrt{1+ R_u^2}}\big|_{u=0}  \right) = - (\sigma + \cos\theta),\\
V=\int_0^{u_m}\pi R^2 \ud u,
\end{aligned}
\end{equation}
where $\theta$ is defined as $\tan\theta=- R_u|_{u=0}$.
Here
$
 \frac{ \sqrt{1+R_u^2}}{R} - \frac{1}{R}\pt_u\left( \frac{R R_u}{\sqrt{1+ R_u^2}} \right)
  =  \frac{1}{R\sqrt{1+R_u^2}}-\frac{R_{uu}}{(1+ R_u^2)^{3/2}},
$
 is the mean curvature in terms of  $R(u)$.
\end{rem}
}

\end{document}